\definecolor{colorDarkgreen}{rgb}{0,0.5,0}
\definecolor{colorDarkred}{rgb}{0.8,0,0}
\definecolor{colorDarkmagenta}{rgb}{0.5,0,0.5}
\theoremstyle{plain}
\newtheorem{thm}{Theorem}[section]
\newtheorem{prp}[thm]{Proposition}
\newtheorem{cor}[thm]{Corollary}
\newtheorem{lem}[thm]{Lemma}
\theoremstyle{definition}
\newtheorem{rem}[thm]{Remark}
\theoremstyle{remark}
\newenvironment{prv}{\sffamily\slshape\small}{}
\newcommand{\bprv}{\begin{prv}}
\newcommand{\eprv}{\end{prv}}
\newcommand{\beqn}{\begin{equation}}
\newcommand{\eeqn}{\end{equation}}
\numberwithin{equation}{section}
\numberwithin{figure}{section}
\renewcommand{\dotfill}{\leaders\hbox to 5pt{\hss.\hss}\hfill}
\DeclareMathOperator{\id}{id}
\DeclareMathOperator{\tr}{Tr}
\DeclareMathOperator{\Grass}{Grass}
\DeclareMathOperator{\GU}{GU}
\DeclareMathOperator{\Sp}{Sp}
\DeclareMathOperator{\GL}{GL}
\DeclareMathOperator{\Spec}{Spec} %\,
\DeclareMathOperator{\Gal}{Gal}
\DeclareMathOperator{\Stab}{Stab}
\DeclareMathOperator{\diag}{diag}
\DeclareMathOperator{\spn}{span}
\DeclareMathOperator{\pr}{pr}
\newcommand{\T}{\text{{\normalfont t}}}
\newcommand{\naive}{\text{\normalfont naive}}
\newcommand{\loc}{\text{\normalfont loc}}
\newcommand{\red}{\text{\normalfont red}}
\newcommand{\sep}{\text{\normalfont sep}}
\newcommand{\lhs}{\text{\normalfont lhs}}
\newcommand{\rhs}{\text{\normalfont rhs}}
\newcommand{\twomat}[4]{\begin{pmatrix}#1&#2\\#3&#4\end{pmatrix}}
\newcommand{\threemat}[9]{\begin{pmatrix}#1&#2&#3\\#4&#5&#6\\#7&#8&#9\end{pmatrix}}
\newcommand{\fourtwomat}[8]{\begin{pmatrix}#1&#2\\#3&#4\\#5&#6\\#7&#8\end{pmatrix}}
\newcommand{\twovec}[2]{\begin{pmatrix}#1\\#2\end{pmatrix}}
\newcommand{\fourvec}[4]{\begin{pmatrix}#1\\#2\\#3\\#4\end{pmatrix}}
\newcommand{\twovex}[2]{\begin{pmatrix}#1&#2\end{pmatrix}}
\newcommand{\fourvex}[4]{\begin{pmatrix}#1&#2&#3&#4\end{pmatrix}}
\newcommand{\twomats}[4]{\bigl(\begin{smallmatrix}#1&#2\\#3&#4\end{smallmatrix}\bigr)}
\newcommand{\mat}[1]{{#1}}
\newcommand{\matx}[1]{{#1}}
\newcommand{\ot}{\otimes}
\newcommand{\iso}{\cong}
\newcommand{\I}{\mat{I}}
\newcommand{\Hh}{\mat{H}}
\newcommand{\J}{\mat{J}}
\newcommand{\Jp}{\mat{J'}}
\newcommand{\A}{\mathbb{A}}
\newcommand{\ZZ}{\mathbb{Z}}
\newcommand{\QQ}{\mathbb{Q}}
\newcommand{\G}{\mathcal{G}}
\newcommand{\F}{\mathcal{F}}
\newcommand{\OO}{\mathcal{O}}
\newcommand{\U}{\mathcal{U}}
\newcommand{\AAA}{\mathcal{A}}
\begin{document}
\selectlanguage{english} %die Wahl der Sprache muss *nach* \begin{document} erfolgen

%+++++++++++++++++++++++++++++++++++++++++++++++++++++++++++++++++++++++++++++++

\pagenumbering{arabic}
\title[On Local Models with Special Parahoric Level Structure]{On Local Models with Special\\Parahoric Level Structure}
\author{Kai Arzdorf}
\address{Mathematisches Inst.\ der Universit\"at Bonn, Beringstr.~1, 53115 Bonn, Germany}
\email{\textcolor{colorDarkmagenta}{arzdorf@math.uni-bonn.de}} %FINAL: \textcolor{black}, nicht colorDarkmagenta
\date{March 2008}
%\dedicatory{It's the heart that really matters in the end.}

\subjclass[2000]{Primary 14G35; Secondary 14M15, 15A24}
%14G35 Modular and Shimura varieties [See also 11F41, 11F46, 11G18]
%14M15 Grassmannians, Schubert varieties, flag manifolds [See also 32M10, 51M35]
%15A24 Matrix equations and identities
%15A06 Linear equations
%14L30 Group actions on varieties or schemes (quotients) [See also 13A50, 14L24]
%14B25 Local structure of morphisms: étale, flat, etc. [See also 13B40]
%11G18 Arithmetic aspects of modular and Shimura varieties [See also 14G35]
%15A75 Exterior algebra, Grassmann algebras
%11F20 Dedekind eta function, Dedekind sums
%11F37 Forms of half-integer weight; nonholomorphic modular forms
%11F11 Modular forms, one variable

%\keywords{Local Model, Special Parahoric Level Structure, Shimura variety}

\begin{abstract}
We consider the local model of a Shimura variety of PEL type, with the unitary similitudes corresponding to a ramified quadratic extension of $\QQ_p$ as defining group. We examine the cases where the level structure at $p$ is given by a parahoric that is the stabilizer of a selfdual periodic lattice chain and that is special in the sense of Bruhat--Tits theory. We prove that in these cases the special fiber of the local model is irreducible and generically reduced; consequently, the special fiber is reduced and is normal, Frobenius split, and with only rational singularities. In addition, we show that in these cases the local model contains an open subset that is isomorphic to affine space.
\end{abstract}

\maketitle

%+++++++++++++++++++++++++++++++++++++++++++++++++++++++++++++++++++++++++++++++

\tableofcontents

%+++++++++++++++++++++++++++++++++++++++++++++++++++++++++++++++++++++++++++++++

\nonfrenchspacing
\section*{Introduction}
\subsection*{Motivation and Main Results}
For the study of arithmetic properties of a variety over an algebraic number field, it is of interest to have a model over the ring of integers. In the particular case of a Shimura variety, one likes to have a model over the ring of integers $\OO_E$, where $E$ is the completion of the reflex field at a finite prime of residue characteristic $p$. It should be flat and have only mild singularities. If the Shimura variety is the moduli space over $\Spec E$ of abelian varieties with additional polarization, endomorphisms, and level structure (a Shimura variety of PEL type), it is natural to define a model by posing the moduli problem over $\OO_E$. In the case of a parahoric level structure at $p$ with the parahoric defined in an elementary way as the stabilizer of a selfdual periodic lattice chain, such a model has been given by Rapoport and Zink \cite{RZ:1996}.

Although in special cases this model is shown to be flat with reduced special fiber and with irreducible components that are normal and that have only rational singularities \cites{G:2001,G:2003}, in general it is not flat, as has been pointed out by Pappas \cite{P:2000}.
In a series of papers, Pappas and Rapoport \cites{PR:2003,PR:2005,PR:2007} examine how to define closed subschemes of this \emph{naive model} that are more likely to be flat. Flatness can be enforced by taking the flat closure of the generic fiber in the naive model. Aside from that, by adding further conditions one can attempt to cut out this closed subscheme, or at least give a better approximation. If the parahoric subgroup is the stabilizer of a selfdual periodic lattice chain, these questions can be reduced to problems of the corresponding \emph{local} models \cite{RZ:1996}. Locally for the \'etale topology around each point of the special fiber, these coincide with the corresponding moduli schemes. This approach has the advantage of leading to varieties that can be defined in terms of linear algebra and, thus, can be handled more easily. In this way, Pappas \cite{P:2000} defines the \emph{wedge local model}, a closed subscheme of the naive local model. The \emph{local model} is defined to be the closure of the generic fiber in the naive local model; it is also a closed subscheme of the wedge local model.

In one of their recent papers, Pappas and Rapoport \cite{PR:2007} study the case where the group defining the Shimura variety is the group of unitary similitudes corresponding to a quadratic extension of $\QQ$ that is ramified at $p$. Assuming the so-called Coherence Conjecture, the reducedness of the geometric special fiber of the local model is proven, and it is shown that its irreducible components are normal and with only rational singularities (loc.cit., Thm.~4.1). Some special cases, however, can be treated without relying on this conjecture. We will prove the following theorem:

\begin{thm}[main theorem, cf.\ Thm.~\ref{thm:specialparahoric:main}]\label{thm:introduction:main}
Let the level structure at $p$ be given by a parahoric that is defined in an elementary way as the stabilizer of a selfdual periodic lattice chain and that is \emph{special} in the sense of Bruhat--Tits theory \cite{T:1979}. Then the special fiber of the local model is irreducible and reduced; furthermore, the special fiber is normal, Frobenius split, and with only rational singularities.
\end{thm}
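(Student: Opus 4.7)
The plan is to deduce the statement from the general machinery of Pappas--Rapoport \cite{PR:2007} together with an explicit construction of a smooth open chart in the special fiber. The guiding principle is that once we know the special fiber $M^\loc\ot\F$ is \emph{irreducible} and \emph{generically reduced}, then all stated properties will follow: the irreducible components of the underlying reduced special fiber are, by \cite{PR:2007}, affine Schubert varieties in a partial affine flag variety attached to the special parahoric, and such Schubert varieties are known (work of Faltings and Pappas--Rapoport) to be normal, Cohen--Macaulay, Frobenius split, and with only rational singularities. Irreducibility upgrades this union to a single such Schubert variety, and Cohen--Macaulayness of the ambient scheme together with generic reducedness of $M^\loc\ot\F$ inside it will force reducedness on the nose.

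First I would set up, separately for each ramified unitary case in which a special parahoric occurs (the cases being classified by the signature of the hermitian form and the self-dual lattice chain fixed by the parahoric, in the spirit of Bruhat--Tits theory), the explicit linear algebra definition of the naive local model $M^\naive$, the wedge local model $M^\wedge$, and the flat local model $M^\loc$ as closed subschemes of a product of Grassmannians. Then, mirroring the approach of G\"ortz \cites{G:2001,G:2003} in the unramified cases, I would exhibit, in a distinguished standard affine open neighborhood of a worst-type point of the special fiber, an explicit affine space $\mathbb{A}^d$ of the expected dimension $d=\dim M^\loc\ot\F$ that is contained in $M^\loc\ot\F$ as an open subscheme. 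Concretely this means writing down an affine chart of the ambient Grassmannian as the space of matrices of a fixed block shape, imposing the duality, Kottwitz, and wedge conditions, and verifying that the resulting subscheme is isomorphic to affine space of the correct dimension.

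From the existence of such an affine-space chart the two key properties are immediate: irreducibility of $M^\loc\ot\F$ follows because the chart is irreducible and open, and any other top-dimensional component would have to meet the open affine neighborhood we chose; generic reducedness follows because $\mathbb{A}^d$ is smooth, so $M^\loc\ot\F$ is reduced at its generic point. Once these two inputs are in hand, I would invoke the Cohen--Macaulayness of affine Schubert varieties to promote generic reducedness to global reducedness of $M^\loc\ot\F$, identify the latter with a single affine Schubert variety, and transport the normality, Frobenius splitting, and rational singularities properties from the Schubert variety to $M^\loc\ot\F$.

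The main obstacle I expect is the case-by-case construction and verification of the affine-space chart in the second step: one has to choose a suitable base point, write out the defining conditions of $M^\loc$ (including the wedge and, where relevant, spin-type conditions) in coordinates, and show not merely that the resulting scheme is smooth of the right dimension, but that every parameter can be freely chosen so that the locus is literally $\mathbb{A}^d$. This calculation depends intimately on the parity of the chain length and on whether one is in the self-dual or almost self-dual special parahoric case, and producing a uniform argument across all the special-parahoric configurations is the delicate part; the rest of the argument is then formal, drawing on the Schubert-variety framework already developed in \cite{PR:2007}.
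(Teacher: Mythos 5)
There is a genuine gap in your plan, and it concerns the central claim that one can find an affine-space chart $\mathbb{A}^d\subset M^\loc\ot\kappa$ in a neighborhood of a \emph{worst-type} point, and that this single chart simultaneously delivers both irreducibility and generic reducedness. The worst point is precisely the unique closed orbit, the point with the worst singularities of the special fiber; the special fiber is not smooth there, so no open neighborhood of it inside $M^\loc\ot\kappa$ can be isomorphic to affine space. What the paper actually does is split the work between two points. Around a \emph{best} point (a point whose orbit is one of the extreme elements of the $\mu$-admissible set), one does get an isomorphism with $\mathbb{A}^{rs}$ by the kind of explicit matrix computation you describe, and this yields an open reduced subset, i.e.\ generic reducedness. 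But this chart only sees the single component containing that best point, so it does not by itself rule out other irreducible components: the best point need not lie in every component, so your argument ``any other top-dimensional component would have to meet the open affine neighborhood'' does not apply to this chart.

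To get irreducibility one must work at the worst point, where the chart of the wedge local model turns out \emph{not} to be an affine space. Instead one obtains a scheme $N$ of pairs of matrices $(X_1,X_3)$ with a nilpotency condition, an $\Sp$-symmetry condition coming from duality, and a wedge (rank) condition; the key technical input is a fibration argument for the $\Sp_{2m}$-equivariant projection $(X_1,X_3)\mapsto X_3$, combined with the classification of nilpotent orbits in the symmetric pair $(\mathfrak{gl}_n,\mathfrak{sp}_n)$ due to Ohta and Kostant--Rallis. This shows $N$ has a unique top-dimensional irreducible component of dimension $rs$, with any others of strictly smaller dimension, and equidimensionality of the local model's special fiber (from flatness) then forces identification with the top component. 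If you attempt to push the affine-space chart through at the worst point, the computation will show that the locus is not $\mathbb{A}^d$: the defining equations there are genuinely non-linear and the scheme is singular at the worst point. So your single-chart strategy does not go through; the two-point, two-argument decomposition is essential.
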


The proof of the theorem is divided into two major steps, in which we prove the following results:

\begin{thm}[first step, cf.\ Thm.~\ref{thm:reduced:reduced}]\label{thm:introduction:reduced}
Let the assumptions be the same as in the main theorem. Then the special fiber of the local model contains an open subset that is reduced.
\end{thm}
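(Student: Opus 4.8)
The plan is to exhibit an explicit open affine chart of the naive local model that already lies in the flat closure (i.e.\ in the local model) and to show that this chart is reduced by direct computation. In the theory of local models of type $A$ (the unitary similitude case), the naive local model $M^{\naive}$ sits inside a product of Grassmannians attached to the selfdual periodic lattice chain, and around a suitable base point one has a standard affine chart obtained by writing the universal subspace as the graph of a matrix $X$ subject to the lattice-chain inclusion relations and to the perpendicularity/selfduality condition coming from the alternating form. Concretely, I would pick the base point of the local model corresponding to the ``$\mu$-admissible'' vertex fixed by the special parahoric and trivialize the Grassmannian there; the chart is then $\Spec$ of a polynomial ring in the entries of $X$ modulo the ideal $I^{\naive}$ generated by (a) the entries of $X^2$ (or $X\cdot\Pi$, reflecting the nilpotence/period condition), (b) the vanishing of a certain trace or characteristic-polynomial coefficient (the ``Kottwitz condition''), and (c) the relation $X^\ast=\pm X$ (or $X^\ast + X = 0$) forced by selfduality, where $\ast$ is the adjoint with respect to the form.

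The key steps, in order, would be: (1) set up the chart combinatorially — identify the special vertex in the Bruhat--Tits building, write down the corresponding selfdual lattice chain explicitly, and parametrize the open neighborhood of the base point; (2) write down the defining ideal $I^{\naive}$ of the chart of $M^{\naive}$ in these coordinates; (3) identify inside this chart the locus cut out by the additional wedge/flatness conditions, and check that at the base point the local model and the naive local model have the same (reduced) tangent space, so that the base point is a smooth point of the generic fiber whose closure meets the chart in a dense subset; (4) compute the radical of $I^{\naive}$ (or of the ideal of the local-model chart) directly and verify that the chart ring is reduced — most cleanly by exhibiting the chart ring as (an open subscheme of) something manifestly reduced, e.g.\ a determinantal-type ring, a product of affine spaces, or by a Gr\"obner-basis argument showing the initial ideal is radical. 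The payoff of working at a \emph{special} vertex is exactly that this local computation simplifies enough to be carried out by hand: the special parahoric maximizes the symmetry of the lattice chain, so the adjoint $\ast$ and the inclusion relations interact in a particularly rigid way, and one expects the chart to become isomorphic to an affine space (this is in fact the content of the second half of the main theorem), which would immediately give reducedness of an open subset.

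The main obstacle I anticipate is step (4) together with the verification that the chosen chart actually lies in the flat closure rather than only in the naive model: a priori $M^{\naive}$ has extra components (this is Pappas's non-flatness phenomenon), so one must argue that the particular base point is \emph{not} on a spurious component, i.e.\ that the generic point of the chart specializes into the flat closure. Concretely one needs to produce, for the generic point of the chart, a deformation over $\OO_E$ (or show the tangent space at the base point has the expected dimension $\dim M_{\QQ}$, forcing the point into the smooth locus of the flat closure). This is the delicate point where the ``special'' hypothesis does real work, and where I would expect the bulk of the argument to reside; once the chart is known to be inside the local model and its coordinate ring is pinned down, checking reducedness should reduce to a concrete commutative-algebra computation of the type already carried out in the work of G\"ortz in the unramified case \cites{G:2001,G:2003}.
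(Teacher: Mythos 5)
Your proposal takes essentially the same route as the paper: pick an explicit affine chart around a carefully chosen base point (the ``best point'' corresponding to an extreme element of the $\mu$-admissible set), unwind the conditions of the wedge local model into matrix equations, show the resulting chart is isomorphic to affine space of dimension $rs$, and then argue by a dimension comparison that this chart already lies in the flat closure. The one place where the paper is cleaner than your sketch is the last step: rather than computing tangent spaces or producing a deformation over $\OO_E$, the paper uses the facts that (a) the local model is flat by definition, so its special fiber is equidimensional of dimension $rs$, (b) the chart $Y\cap U$ of the wedge local model is itself affine $rs$-space over $\OO_E$, hence reduced with irreducible special fiber of dimension $rs$, and (c) the chain of closed immersions $M_I^{\loc}\cap U\subset M_I^{\wedge}\cap U\subset Y\cap U$ is therefore an equality on reduced schemes, which together with reducedness of $Y\cap U$ forces $M_I^{\loc}\cap U=Y\cap U$. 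This avoids any Gr\"obner-basis or radical computation (step 4 of your sketch): one does not need to compute $\sqrt{I^{\naive}}$, only to exhibit the ideal of the \emph{wedge} chart as linear/quadratic relations that can be solved explicitly for dependent variables, so that the chart is manifestly a polynomial ring. That the best point actually lies in the local model (not just the naive one) is taken from Pappas--Rapoport's sect.~3.d, where these points are constructed as reductions of $\OO_E$-points of the local model, which is precisely the ``deformation over $\OO_E$'' you anticipated needing.
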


\begin{thm}[second step, cf.\ Thm.~\ref{thm:irreducibility:irreducible}]\label{thm:introduction:irreducible}
Under the assumptions of the main theorem, the special fiber of the local model is irreducible.
\end{thm}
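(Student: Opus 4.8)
The plan is to reduce the statement about the whole special fiber to a statement about a single, well-chosen affine chart, and there to exhibit an explicit torus action whose fixed locus forces irreducibility. Since the two theorems that precede this one give, respectively, that the special fiber contains a reduced open subset and (the statement we now want to prove) that it is irreducible, the natural strategy is: first, use the elementary description of the naive local model as a closed subscheme of a product of Grassmannians over $\OO_E$, restrict to the standard affine chart $\U$ around the ``worst'' point (the one fixed by the parahoric, corresponding to the lattice chain itself), and write down the defining equations of the local model inside $\U$ explicitly in terms of matrix entries; these come from the wedge condition, the duality (selfduality of the lattice chain), and the Kottwitz rank/trace conditions. Special parahoric level structure makes this chart especially small — morally ``half the size'' of the general ramified unitary case — so the equations should be manageable.

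Next, I would introduce a $\mathbb{G}_m$-action (or more generally a suitable torus action) on this affine chart that descends from an action on the ambient Grassmannians and preserves all the defining conditions; the existence of such an action is essentially the statement that the chart is defined by homogeneous equations for an appropriate grading on the coordinate ring. The key point is that the fixed point scheme of this action in the special fiber consists of a single (reduced) point — again the worst point. By a standard Bia\l ynicki-Birula type argument, or more elementarily by the observation that a scheme with a $\mathbb{G}_m$-action contracting everything to a single point must be connected, one gets that the special fiber of the local model is connected. Combining connectedness with the known equidimensionality of the special fiber (all irreducible components have the same dimension, which follows from flatness of the local model together with the fact that the generic fiber is irreducible of that dimension) and with the first step (some open subset is reduced, hence contains a point lying on only one component), one can try to push this to irreducibility — but connectedness plus equidimensionality alone does not give irreducibility, so more is needed here.

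The cleanest route, and the one I would actually pursue, is to go one step further and identify an open neighborhood of the worst point inside the local model with an explicit irreducible variety — indeed, with affine space, which is exactly the ``additional'' statement advertised in the abstract. Concretely: inside the affine chart $\U$, show that the local model (the flat closure of the generic fiber) is cut out by equations that allow one to solve for some of the matrix entries in terms of the others, exhibiting an open piece of the special fiber as an affine space $\A^N$ over the residue field. Since this open piece is then irreducible of the correct dimension $N = \dim$ (special fiber), and since it is dense in whichever component contains the worst point, and since the $\mathbb{G}_m$-action contracts the \emph{entire} special fiber into this open piece (every point flows to the worst point, which lies in $\U$), every irreducible component of the special fiber must meet $\U$ and hence be the closure of part of this affine space; equidimensionality then forces there to be only one component. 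That gives irreducibility.

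The main obstacle, and where the real work lies, is the explicit computation in the affine chart: writing down the wedge and duality conditions in coordinates for the special-parahoric lattice chain, checking that the $\mathbb{G}_m$-action really does preserve the flat closure (not just the naive model — one must verify the extra equations cutting out the closure are homogeneous, which is where the Coherence-Conjecture-free input from the first step, reducedness of an open subset, actually gets used), and verifying that the resulting open subscheme of the special fiber is literally affine space rather than merely smooth or rational. I expect the linear algebra to split into a small number of cases according to the parity of $n$ and the type of the special vertex (hyperspecial-like versus the genuinely special-but-not-hyperspecial vertex), with the symplectic-type chart appearing as a degeneration; handling the duality pairing correctly in each case — in particular getting the signs and the alternating-versus-symmetric distinction right — is the delicate part, but it is a finite, checkable computation rather than a conceptual difficulty.
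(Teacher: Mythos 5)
Your overall strategy---reduce to an open neighborhood of the ``worst'' point, show that neighborhood is irreducible, and deduce that all components must coincide---is in fact the paper's approach, via Lem.~\ref{lem:irreducibility:worstpoint} and Prop.~\ref{prp:irreducibility:irreducibleneighborhood}. But your execution has a genuine gap: you assert that an open neighborhood of the worst point inside the local model can be ``identified with affine space,'' and you cite the abstract's claim to that effect as evidence. This conflates the two special points. The open affine-space piece advertised in the abstract (and established in Thm.~\ref{thm:reduced:reduced} / Prop.~\ref{prp:reduced:affine}) is a neighborhood of the \emph{best} point $(\F_1,\G_1)$, which is a smooth point of the special fiber. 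A neighborhood of the \emph{worst} point $(\F_0,\G_0)$ cannot possibly be affine space: this is by construction the point where the worst singularities occur, and it sits in the unique closed orbit. If you try instead to use the best-point affine chart for the contraction argument, the logic also breaks: a $\mathbb{G}_m$-flow contracting to the worst point does force every component to contain the worst point, hence to meet any neighborhood of the worst point, but it gives you no reason for components to meet a neighborhood of the best point, which in general lies on only one component.

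The actual content of the irreducibility proof is therefore not a coordinate calculation ending in an affine space. After restricting to the standard affine chart around $(\F_0,\G_0)$ and evaluating conditions (N2)--(N4), (W), (Pi) in coordinates, the special fiber is cut out by the matrix conditions \eqref{eq:irreducibility:X1_symmetry}--\eqref{eq:irreducibility:wedgeac} on pairs $(\mat{X_1},\mat{X_3})$ (a nilpotent square-zero matrix with a $\sigma$-antisymmetry constraint, coupled to a row vector). Irreducibility of this scheme $N$ is established by exploiting the right action of $\Sp_{2m}$, the equivariant projection $\pr_{\mat{X_3}}$ onto the $\mat{X_3}$-coordinate, and a case analysis of the zero fiber and the fiber over a nonzero reference vector; the zero fiber in turn is identified with the special fiber of a matrix scheme already analyzed by Pappas--Rapoport using Ohta's and Kostant--Rallis' results on nilpotent orbit closures in the symmetric pair $(\mathfrak{gl}_n,\mathfrak{sp}_n)$. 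This representation-theoretic input is where the irreducibility and dimension counts actually come from; it is not a ``finite, checkable linear-algebra computation'' in the sense you suggest, and the $\mathbb{G}_m$-contraction alone cannot substitute for it. Your intermediate observation that connectedness plus equidimensionality does not give irreducibility is correct, and it is precisely why the nontrivial orbit analysis is needed at the worst point rather than a shortcut via an affine chart.
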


Once it is shown that the special fiber of the local model is irreducible and generically reduced, the other properties stated in the main theorem follow by standard methods given in the paper by Pappas and Rapoport \cite{PR:2007}*{Proof of Thm.~5.1}.

It is shown in sect.\ 1.b.\ of loc.cit.\ that there are exactly three cases where the stabilizer subgroup is a special parahoric. Two of these cases have been treated by the authors of loc.cit.\ in sect.\ 5 of their paper, providing a proof of the theorems in these cases. The present paper is about the proof of the third case, which has not been treated (in full generality) in the literature yet, cf.\ Rem.~\ref{rem:specialparahoric:Rapoport}. Moreover, the results we obtain during the proof of Thm.~\ref{thm:introduction:reduced} are stronger than actually necessary:

\begin{thm}[cf.\ Thm.~\ref{thm:reduced:reduced} and Thm.~\ref{thm:other:reduced}]\label{thm:introduction:affine}
Let the same assumptions hold true as in the main theorem. Then the local model contains an open subset that is isomorphic to affine space.
%\footnote{The dimension depends on the signature $(r,s)$ of the algebraic group associated to the Shimura variety (after base change to the real numbers) and equals $rs$.}
\end{thm}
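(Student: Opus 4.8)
The plan is to exhibit a single explicit affine chart of the local model and identify it with affine space by a direct coordinate computation. Recall that the local model lives inside a product of Grassmannians attached to the selfdual periodic lattice chain, and that the ``big cell'' around the base point is cut out, inside a product of affine spaces of matrices, by the conditions coming from the chain maps, the determinant (charpol) condition, and the selfduality pairing. Because the parahoric is special, the lattice chain is as short as possible (it essentially involves one or two lattices up to scaling and duality), so the ambient object is small and the incidence conditions are manageable. First I would set up coordinates on the standard open neighborhood $\U$ of the worst point of the special fiber, writing a point of $\U$ as a graph $\binom{\I}{X}$ of a matrix $X$ with respect to a suitable basis adapted to the lattice chain; the chain inclusions then force $X$ to have a prescribed block shape, and the selfduality condition forces $X$ (in the appropriate block) to be symmetric or alternating depending on the parity/case at hand.

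Second, I would impose the determinant condition. Over the generic fiber this condition, together with selfduality, pins down the remaining free entries; the key point is that on this particular chart the equations defining the flat closure become, after the change of coordinates, \emph{linear} in the entries of $X$ — so that solving them simply eliminates some coordinates and leaves the rest free. Concretely, I expect the charpol condition on the relevant Grassmannian factor to read off one block of $X$ in terms of another, with no further relations, so that $\U \cap M^{\loc}$ is the spectrum of a polynomial ring in the surviving entries. One then checks that this chart is nonempty in the special fiber (it contains the base point) and open, which is immediate from the Grassmannian construction.

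The main obstacle will be bookkeeping in the third (untreated) special case: one has to choose the basis and the indexing of the lattice chain carefully so that the duality pairing and the chain maps are simultaneously in normal form, and then verify that after this normalization the defining equations really do degenerate to linear ones rather than leaving a residual quadric. This is where the special-parahoric hypothesis is essential — for a non-special parahoric the analogous chart carries genuine nonlinear equations. I would handle this by reducing, as in \cite{PR:2007}*{sect.~1.b}, to the explicit matrix description of the chain and its dual, then splitting into the subcases according to the parity of $n$ (and the position of the special vertex in the local Dynkin diagram), performing the elimination in each. Since Thm.~\ref{thm:introduction:reduced} already provides a reduced open subset via essentially this computation, the extra work here is only to observe that the coordinate ring obtained is not merely reduced but a polynomial ring; once the linearity of the equations is established this is automatic, so I expect no further serious difficulty beyond the case analysis.
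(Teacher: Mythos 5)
Your plan has the right general shape — pick an explicit affine chart, evaluate the moduli conditions by matrix computation, and show the result is a polynomial ring — but it starts from the wrong point and would fail if carried out. You propose to work in the graph chart $\binom{\I}{X}$ around the \emph{worst} point of the special fiber. In the paper that is exactly the chart used in Section~4 to prove \emph{irreducibility}, and the resulting matrix scheme $N$ (cut out by $X_1^2=0$, $X_1+\sigma(X_1)=0$, $\wedge^{s+1}\binom{X_1}{X_3}=0$, $X_3X_1=0$) is emphatically not affine space: it is a union of closures of nilpotent $\Sp$-orbits with genuine singularities, and the whole point of Section~4 is to fight to show it has a unique top-dimensional component. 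The affine chart the theorem is really about is an open neighborhood of a \emph{best} point $(\F_1,\G_1)$ (the image of the special point of the generic fiber corresponding to an extreme admissible element), not of the worst point. That chart is taken in a mixed standard open $\Grass^J_{n,2n}$ where $\F$ has the block form $\bigl(\begin{smallmatrix}\I_s&0\\ a&b\\ 0&\I_r\\ c&d\end{smallmatrix}\bigr)$, not the graph form.

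Two further inaccuracies. First, even in the correct chart the equations are \emph{not} linear: $\Pi$-stability forces $b$ to be a quadratic expression in $a,c,d$, and the remaining relation \eqref{eq:reduced:maineq} between the $a$-entries is quadratic; what actually happens is that the equations have a triangular structure letting one solve for certain entries \emph{polynomially} in the remaining free ones (using that $\operatorname{char}\,k\neq 2$ to split symmetric/antisymmetric parts), so the coordinate ring is a polynomial ring in $rs$ variables even though the relations are not linear. Second, you gloss over the passage from the wedge local model to the local model itself. The affine chart is first established inside $M^\wedge_I$; to conclude it lies in $M^{\loc}_I$ one must invoke that the local model is flat and projective with irreducible generic fiber of dimension $rs$, hence has equidimensional nonempty special fiber of dimension $rs$, and then compare dimensions against the $rs$-dimensional reduced chart — this dimension argument is the mechanism that shows the remaining (unverified) conditions of the wedge model impose nothing, and it is not optional.

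<br>

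<br>
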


All of the above mentioned results are achieved by first evaluating the conditions of the wedge local model for open neighborhoods of certain special points (the ``best point'' and the ``worst point'', see sects.~\ref{ssec:reduced:bestpoint} and \ref{ssec:irreducibility:worstpoint}) and then passing to the actual local model using dimension arguments.

More precisely, the conditions of the wedge local model translate into several matrix identities, and we examine the schemes defined in this way. In the cases of Thm.~\ref{thm:introduction:reduced} and Thm.~\ref{thm:introduction:affine}, this leads to affine spaces described by simple matrix equations. In the case of Thm.~\ref{thm:introduction:irreducible}, we have to deal with a more complicated matrix scheme. We exploit that the symplectic group acts thereon, and by considering an equivariant projection morphism, we can confine ourselves to the study of certain fibers. These can be described following arguments by Pappas and Rapoport from their treatment of one of the other cases of a special parahoric level structure \cite{PR:2007}*{sect.~5.e}, using results of Ohta \cite{O:1986}*{Prop.\ 1 and Thm.\ 1} and of Kostant and Rallis \cite{KR:1971}*{Prop.~5 and its proof} on the structure of nilpotent orbits in the classical symmetric pair $(\mathfrak{gl}_n,\mathfrak{sp}_n)$.

By definition, the local model is flat; hence, its special fiber is equidimensional and has the same dimension as the generic fiber. The aforementioned matrix schemes are seen either to be irreducible of that dimension, or to contain a single irreducible component of that dimension with all other irreducible components having smaller dimension. Since the local model is a closed subscheme of the wedge local model, this allows transition to the local model.

%...............................................................................

\subsection*{Structure}
The paper is divided into five sections. In the first section we recall the construction of the local model for the situation considered above. In the second section we formulate the main theorem (Thm.~\ref{thm:introduction:main}) of this paper, with its two-part proof ranging over the following sections three and four, where we establish Thm.~\ref{thm:introduction:reduced} and Thm.~\ref{thm:introduction:irreducible}, respectively. As mentioned above, a slightly stronger result is obtained during the proof of Thm.~\ref{thm:introduction:reduced}. This carries over to the cases treated by Pappas and Rapoport and is the topic of the final section, cf.\ Thm.~\ref{thm:introduction:affine}.

%...............................................................................

\subsection*{Acknowledgments}
I conclude the introduction by thanking those people who helped and supported me in writing this paper. In particular, my thanks go to Prof.\ Dr.\ M.~Rapoport for introducing me to this fine area of mathematics and his steady interest in my work. I also thank Priv.-Doz.\ Dr.\ U.~G\"ortz for helping me with a multitude of questions. Finally, I am indebted to the Professor-Rhein-Stiftung for its financial support during my study.

%-------------------------------------------------------------------------------

\section{Definition of the Local Model}
We recall the construction of the local model for the general unitary group, as given in the recent paper by Pappas and Rapoport \cite{PR:2007}. We first introduce the basic notions and then define the naive local model. This is followed by a short discussion of the wedge local model, which provides a closed subscheme of the naive local model. Finally, we give the definition of the local model.

%...............................................................................

\subsection{Standard Lattices}
We use the notation of loc.cit. Let $F_0$ be a complete
discretely valued field with ring of integers $\OO_{F_0}$ and perfect residue field $k$ of characteristic $\neq 2$ and uniformizer $\pi_0$. Let $F/F_0$ be a ramified quadratic extension and $\pi\in F$ a uniformizer with $\pi^2=\pi_0$. Let $V$ be an $F$-vector space of dimension $n\ge3$ with an $F/F_0$-hermitian form
\[\phi:\ V\times V\rightarrow F\;,\]
which we assume to be split. This means that there exists a basis $e_1,\dots,e_n$ of $V$ such that
\[\phi(e_i,e_{n+1-j})=\delta_{i,j}\ \text{for all}\ i,j=1,\dots,n\;.\]
We have two associated $F_0$-bilinear forms:
\begin{gather*}
\langle x,y\rangle:=\frac12\tr_{F/F_0}(\pi^{-1}\;\phi(x,y))\;,\\
(x,y):=\frac12\tr_{F/F_0}(\phi(x,y))\;,
\end{gather*}
with \mbox{$\langle$ , $\rangle$} being alternating and \mbox{$($ , $)$} being symmetric. For any $\OO_F$-lattice $\Lambda$ in $V$ we denote by
\[\hat{\Lambda}:=\{v\in V;\ \phi(v,\Lambda)\subset\OO_F\}=
\{v\in V;\ \langle v,\Lambda\rangle\subset\OO_{F_0}\}\]
the dual lattice with respect to the alternating form and by
\[\hat{\Lambda}^{\text{s}}:=\{v\in V;\ (v,\Lambda)\subset\OO_{F_0}\}\]
the dual lattice with respect to the symmetric form. We have $\hat{\Lambda}^{\text{s}}=\pi^{-1}\;\hat{\Lambda}$.

For $i=0,\dots,n-1$, we define the standard lattices
\[\Lambda_i:=\spn_{\OO_F}\{\pi^{-1}e_1,\dots,\pi^{-1}e_i,e_{i+1},\dots,e_n\}\;.\]

%...............................................................................

\subsection{Selfdual Periodic Lattice Chain}\label{ssec:localmodel:latticechain}
Write $n=2m$ if $n$ is even and $n=2m+1$ if $n$ is odd. We consider nonempty subsets $I\subset\{0,\dots,m\}$ with the requirement that for $n=2m$ even, if $m-1$ is in $I$, then also $m$ is in $I$. We complete the $\Lambda_i$ with $i\in I$ to a selfdual periodic lattice chain by first including the duals $\Lambda_{n-i}:=\hat{\Lambda}^{\text{s}}_i$ for $i\in I\setminus\{0\}$ and then all the $\pi$-multiples: For $j\in\ZZ$ of the form $j=kn+i$ with $k\in\ZZ$ and $i\in I$ or $n-i\in I$, we set $\Lambda_j:=\pi^{-k}\;\Lambda_i$. Then the $\Lambda_j$ form a periodic lattice chain $\Lambda_I$, which satisfies $\hat{\Lambda}_j=\Lambda_{-j}$.

The index sets $I$ of the above form are in one-to-one correspondence with the parahoric subgroups of the unitary similitude group
\[\GU(V,\phi)=\{\matx{g}\in\GL_F(V);\ \phi(\matx{g}x,\matx{g}y)=c(\matx{g})\phi(x,y),\ c(\matx{g})\in {F_0}^{\times}\}\]
of the vector space $V$ and the form $\phi$, as is shown in sect.~1.b.3.\ of Pappas and Rapoport's paper \cite{PR:2007}. If $n=2m+1$ is odd, the correspondence is given by assigning the stabilizer subgroup
\[P_I:=\{\matx{g}\in\GU(V,\phi);\ \matx{g}\;\Lambda_i=\Lambda_i\ \text{for all}\ i\in I\}\subset \GU(V,\phi)\]
to the lattice chain $\Lambda_I$. If $n=2m$ is even, the situation is slightly more complicated. One has to consider a certain subgroup of $P_I$ (the kernel of the Kottwitz homomorphism), which gives a proper subgroup (of index two) exactly when $I$ does not contain $m$.

%...............................................................................

\subsection{Reflex Field}
Let ${F_0}^{\sep}$ be a fixed separable closure of $F_0$. We fix for each of the two embeddings $\varphi:F\to{F_0}^{\sep}$ an integer $r_{\varphi}$ with $0\le r_{\varphi}\le n$. The reflex field $E$ associated to these data is the finite field extension of $F_0$ contained in ${F_0}^{\sep}$ with
\[\Gal({F_0}^{\sep}/E)=\{\tau\in\Gal({F_0}^{\sep}/F_0);\;r_{\tau\varphi}=r_{\varphi}\ \text{for all}\ \varphi\}\;.\]

%...............................................................................

\subsection{Naive Local Model}
We fix nonnegative integers $r$ and $s$ with \mbox{$n=r+s$}. In the theory of Shimura varieties, these integers correspond to the signature of the algebraic group associated to the Shimura variety (after base change to the real numbers). By replacing $\phi$ by $-\phi$ if necessary, we may assume $s\le r$. We further assume $s>0$ (otherwise the corresponding Shimura variety is zero-dimensional). With $r$ and $s$ taken for $r_{\varphi}$ in the previous subsection, the reflex field $E$ equals $F$ if $r\ne s$ and $F_0$ if $r=s$.

For ease of notation, we denote the tensor product over $\OO_{F_0}$ just by $\ot$. We formulate a moduli problem $M_I^{\naive}$ on the category of $\OO_E$-schemes: A point of $M_I^{\naive}$ with values in an $\OO_E$-scheme $S$ is given by $\OO_F\ot\OO_S$-submodules
\[\F_j\subset\Lambda_j\ot\OO_S\]
for each $j\in\ZZ$ of the form $j=kn\pm i$ with $k\in\ZZ$ and $i\in I$. For each $j$, the following conditions have to be satisfied:
\begin{enumerate}
\item[(\hypertarget{hyp:localmodel:N1}{N1})] As an $\OO_S$-module, $\F_j$ is locally on $S$ a direct summand of rank $n$.
\item[(\hypertarget{hyp:localmodel:N2}{N2})] For each $j<j'$, there is a commutative diagram
\[\begin{array}{ccc}\Lambda_j\ot\OO_S&\rightarrow&\Lambda_{j'}\ot\OO_S\\
\cup&&\cup\\
\F_j&\rightarrow&\F_{j'}\end{array}\]
where the top horizontal map is induced by the lattice inclusion $\Lambda_j\subset\Lambda_{j'}$, and for each $j$, the isomorphism $\pi:\ \Lambda_j\rightarrow\Lambda_{j-n}$ induces an isomorphism of $\F_j$ with $\F_{j-n}$.
\item[(\hypertarget{hyp:localmodel:N3}{N3})] $\F_{-j}=\F_j^\perp$, with $\F_j^\perp$ denoting the orthogonal complement of $\F_j$ under the natural perfect pairing
\[{\langle\ ,\ \rangle}\ot\OO_S:\ (\Lambda_{-j}\ot\OO_S)\times(\Lambda_j\ot\OO_S)\rightarrow\OO_S\;.\]
\item[(\hypertarget{hyp:localmodel:N4}{N4})] Denote by $\Pi$ the respective action on $\Lambda_j\ot\OO_S$ given by multiplication with $\pi\ot1$. Since $\F_j$ is required to be an $\OO_F\ot\OO_S$-module, $\Pi$ restricts to an action on $\F_j$. The characteristic polynomial equals
\[\det(T\;\id-\Pi|\F_j)=(T-\pi)^s(T+\pi)^r\in\OO_S[T]\;.\]
\end{enumerate}
The moduli problem formulated in this way is representable by a projective scheme over $\Spec\OO_E$ since the above conditions define a closed subfunctor of a product of Grassmann functors.
$M^{\naive}_I$ is called the \emph{naive local model} associated to the group $GU(V,\phi)$, the signature type $(r,s)$, and the selfdual periodic lattice chain $\Lambda_I$.

%...............................................................................

\subsection{Wedge Local Model}
\label{ssec:localmodel:wedgelocalmodel}
As mentioned in the introduction, the naive local model is almost never flat over $\OO_E$. Pappas \cite{P:2000} defines a closed subscheme of $M_I^{\naive}$ by imposing an additional condition:
\begin{enumerate}
\item[(\hypertarget{hyp:localmodel:W}{W})] If $r\neq s$, we have for each $j$
\begin{gather*}
\wedge^{r+1}(\Pi-\sqrt{\pi_0}|\F_j)=0\;,\\
\wedge^{s+1}(\Pi+\sqrt{\pi_0}|\F_j)=0\;.
\end{gather*}
Here we have written $\sqrt{\pi_0}$ for the action on $\Lambda_j\ot\OO_S$ given by multiplication with $1\ot\pi$. Note that the assumption $r\neq s$ implies $\pi\in\OO_S$.
\end{enumerate}
We denote the corresponding closed subscheme by $M_I^{\wedge}$. It is called the \emph{wedge local model}.

\begin{lem}\label{lem:localmodel:genericfiber}
The wedge local model has the same generic fiber as the naive local model.
\end{lem}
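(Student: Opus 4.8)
The plan is to observe that passing to the generic fiber means base-changing from $\OO_E$ to its fraction field $E$, which has characteristic $0$; in particular $2$ and $\pi_0$ become invertible, and after choosing a square root of $\pi_0$ in a suitable extension (or after base change to $\bar{E}$, which suffices since both sides are of finite type and we only need to compare closed subschemes cut out by the same ambient functor) the element $\Pi$ acting on each $\F_j$ satisfies a \emph{separable} characteristic polynomial $(T-\pi)^s(T+\pi)^r$ with the two roots $\pm\pi$ distinct. The point is then purely linear-algebraic: on an $\OO_S$-module with $S$ an $E$-scheme, an endomorphism whose characteristic polynomial is $(T-\pi)^s(T+\pi)^r$ automatically has its $\pm\pi$-eigenspaces as direct summands of the expected ranks $s$ and $r$, because $(T-\pi)$ and $(T+\pi)$ are coprime in $E[T]$. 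Consequently $(\Pi-\pi)$ kills the $\pi$-eigenspace and acts invertibly on the $(-\pi)$-eigenspace, so $(\Pi-\pi)$ has rank exactly $r$ and hence $\wedge^{r+1}(\Pi-\pi\mid\F_j)=0$ holds identically; symmetrically $\wedge^{s+1}(\Pi+\pi\mid\F_j)=0$. (Here I am using that when $r\ne s$ one has $E=F$, so $\pi\in\OO_E$ and $\sqrt{\pi_0}=\pi$ up to sign, and when $r=s$ condition (W) is vacuous; so it is enough to treat $r\ne s$.)

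The key steps, in order, are: first, reduce to the case $r\ne s$, so that (W) is a nontrivial condition and $\pi\in\OO_E$; second, base change everything to $E$ (or $\bar E$) and record that $M_I^{\wedge}$ and $M_I^{\naive}$ are both closed subschemes of the same product of Grassmannians $\prod_j \Grass(n,\Lambda_j\ot\OO_S)$, so that to prove the equality of generic fibers it suffices to prove that every $E$-point — equivalently, every $S$-point with $S$ an $E$-scheme — of $M_I^{\naive}$ already satisfies (W); third, carry out the linear-algebra argument above on each $\F_j$ separately, using the separability of $(T-\pi)^s(T+\pi)^r$ over the characteristic-$0$ ring $\OO_S$ to conclude that the characteristic-polynomial condition (N4) forces the wedge conditions. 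Since (W) is a closed condition and is satisfied on a scheme-theoretically dense open (in fact on all of $M_I^{\naive}\ot E$), it follows that $M_I^{\wedge}\ot E = M_I^{\naive}\ot E$.

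The only mildly delicate point — and the place I would be most careful — is the passage from ``the condition holds on geometric points'' to ``the closed subscheme is the whole thing''. This is not automatic for an arbitrary moduli problem, but here it is harmless: $M_I^{\naive}$ is a closed subscheme of a product of Grassmannians, which is smooth (hence reduced) over $\OO_E$, and $M_I^{\naive}\ot E$ is in particular reduced because it embeds in the smooth $E$-scheme $\prod_j\Grass(n,\Lambda_j\ot E)$ as a closed subscheme — but in fact one does not even need reducedness, since the wedge conditions are \emph{identities} in the universal case: for the universal object $\F_j$ over $M_I^{\naive}\ot E$ the endomorphism $\Pi$ satisfies $(T-\pi)^s(T+\pi)^r$, the ideals $(T-\pi)$ and $(T+\pi)$ generate the unit ideal in $(\OO_{M_I^{\naive}\ot E})[T]$, and the Cayley--Hamilton/CRT decomposition of $\F_j$ into eigenspaces is valid over the base scheme itself, yielding $\wedge^{r+1}(\Pi-\pi\mid\F_j)=0$ and $\wedge^{s+1}(\Pi+\pi\mid\F_j)=0$ as equations of functions on $M_I^{\naive}\ot E$. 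Hence the closed subscheme defined by (W) is all of $M_I^{\naive}\ot E$, and therefore $M_I^{\wedge}$ and $M_I^{\naive}$ have the same generic fiber.
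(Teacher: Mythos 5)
There is a genuine gap in the linear-algebra step. You claim that, because the characteristic polynomial of $\Pi\mid\F_j$ is $(T-\pi)^s(T+\pi)^r$ and $(T-\pi)$, $(T+\pi)$ are coprime, the $\pm\pi$-\emph{eigenspaces} are direct summands of ranks $s$ and $r$. Coprimality together with Cayley--Hamilton only yields a decomposition into \emph{generalized} eigenspaces $\ker(\Pi-\pi)^s\oplus\ker(\Pi+\pi)^r$. On the first summand $(\Pi-\pi)$ is merely nilpotent, not necessarily zero; for example, an endomorphism with a single Jordan block of size $s\ge 2$ for the eigenvalue $\pi$ and a semisimple $(-\pi)$-block of size $r$ satisfies (\hyperlink{hyp:localmodel:N4}{N4}) but has $\rank(\Pi-\pi)=r+s-1>r$, so $\wedge^{r+1}(\Pi-\pi)\neq 0$. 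Thus (\hyperlink{hyp:localmodel:N4}{N4}) alone does not imply (\hyperlink{hyp:localmodel:W}{W}).

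The missing ingredient — which is exactly what the paper's proof supplies — is that $\Pi$ is multiplication by $\pi\ot 1$ on an $\OO_F\ot\OO_S$-module, and $\pi^2=\pi_0$ in $\OO_F$, so $\Pi^2=\pi_0\cdot\id$ already on the ambient $\Lambda_j\ot\OO_S$. Hence the minimal polynomial of $\Pi\mid\F_j$ divides $T^2-\pi_0=(T-\pi)(T+\pi)$, which is separable over any $E$-algebra since $2\pi$ is a unit; therefore $\Pi\mid\F_j$ is semisimple with only the eigenvalues $\pm\pi$, and \emph{then} (\hyperlink{hyp:localmodel:N4}{N4}) pins down the eigenspace ranks as $s$ and $r$, giving $\rank(\Pi-\pi\mid\F_j)=r$ and $\rank(\Pi+\pi\mid\F_j)=s$ as you want. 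The paper phrases this by observing that $\Pi$ is represented by $\diag(\mat{B},\dots,\mat{B})$ with $\mat{B}=\twomats{}{\pi_0}{1}{}$, whose characteristic polynomial $T^2-\pi_0$ is separable, so $\Pi$ and a fortiori its restriction to $\F_j$ is diagonalizable. Your surrounding structure (reduction to $r\neq s$, working with arbitrary $E$-algebras rather than only geometric points) is fine and matches the paper; only the justification for semisimplicity needs to be replaced by the $\Pi^2=\pi_0$ argument.
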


\begin{proof}
We may assume $r\neq s$ since otherwise the wedge condition is trivial. To examine the generic fiber of the naive local model, we have to consider $A$-valued points, with $A$ an arbitrary $E$-algebra. These are given by subspaces $\F_j\subset\Lambda_j\ot A$ subject to conditions (\hyperlink{hyp:localmodel:N1}{N1})--(\hyperlink{hyp:localmodel:N4}{N4}). We fix an $\OO_F$-basis $f_1,\dots,f_n$ of $\Lambda_j$. This induces an $A$-basis $f_1,\pi f_1,\dots,f_n,\pi f_n$ of $\Lambda_j\ot A$ via the identification $\OO_F\iso\OO_{F_0}\cdot1+\OO_{F_0}\cdot\pi$. Then $\Pi$ is represented by the diagonal block matrix $\diag(\mat{B},\dots,\mat{B})$ of size $2n$, with the square matrix $\mat{B}$ of size two given by $\twomats{}{\pi_0}{1}{}$.

Since the characteristic polynomial of $\mat{B}$ is $T^2-\pi_0=(T-\pi)(T+\pi)$, the endomorphism $\Pi$ is diagonalizable over $A$. So is the restriction to the \mbox{$\Pi$-stable} subspace $\F_j$. By (\hyperlink{hyp:localmodel:N4}{N4}), the corresponding characteristic polynomial equals $(T-\pi)^s(T+\pi)^r$; hence, we can choose a basis such that $\Pi|\F_j$ is represented by the diagonal matrix $\diag(\pi,\dots,\pi,-\pi,\dots,-\pi)$, with $\pi$ occurring $s$ times and $-\pi$ occurring $r$ times. Now it is obvious that (\hyperlink{hyp:localmodel:W}{W}) is automatically satisfied in the situation considered. Therefore, the wedge condition does not alter the generic fiber.
\end{proof}
 
%...............................................................................

\subsection{Local Model}
The \emph{local model} $M_I^{\loc}$ is defined to be the flat closure of the generic fiber in the naive local model $M_I^{\naive}$. In particular, their generic fibers coincide. The following result will be used later on.

\begin{lem}\label{lem:localmodel:dimensiongenericfiber}
The generic fiber of the local model is irreducible of dimension $rs$.
\end{lem}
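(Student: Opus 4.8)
The plan is to identify the generic fiber of the local model with a product of ordinary Grassmannians and then read off irreducibility and dimension. Since the local model has the same generic fiber as the naive local model by definition, and by Lemma~\ref{lem:localmodel:genericfiber} the same generic fiber as the wedge local model, it suffices to understand $M_I^{\naive}\ot_{\OO_E}E$. As in the proof of Lemma~\ref{lem:localmodel:genericfiber}, over any $E$-algebra $A$ the operator $\Pi$ acting on $\Lambda_j\ot A$ is diagonalizable with eigenvalues $\pm\pi$ (here $\pi\in E$ when $r\ne s$, and one works over a quadratic extension or with the pair of embeddings $F\to \overline{F_0}$ when $r=s$), so each $\F_j$ decomposes as a direct sum of its $\pi$-eigenspace and its $-\pi$-eigenspace inside the corresponding eigenspace decomposition of $\Lambda_j\ot A$.

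First I would set up this eigenspace decomposition carefully and observe that condition (\hyperlink{hyp:localmodel:N4}{N4}) forces the $(-\pi)$-eigenspace of $\F_j$ to have rank $r$ and the $\pi$-eigenspace to have rank $s$, each sitting inside an $n$-dimensional ambient eigenspace. Next I would analyze how the chain condition (\hyperlink{hyp:localmodel:N2}{N2}) and the duality condition (\hyperlink{hyp:localmodel:N3}{N3}) interact with this splitting: over $E$ the transition maps $\Lambda_j\to\Lambda_{j'}$ become isomorphisms on eigenspaces (the cokernels are $\pi$-torsion, hence killed after inverting $\pi$), so all the $\F_j$ are determined by a single one, and (\hyperlink{hyp:localmodel:N3}{N3}) identifies the $\pi$-eigenspace datum with the orthogonal complement of the $(-\pi)$-eigenspace datum under the induced pairing. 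The upshot is that the generic fiber is a single Grassmannian $\Grass(s, n)$ (equivalently $\Grass(r,n)$), or at worst a product of such over the finitely many orbits of indices, all of the same dimension $rs$.

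Concretely, I would show $M_I^{\naive}\ot E \iso \Grass(s,n)_E$ via the map sending $(\F_j)$ to the $s$-dimensional $(-\pi)$-eigenspace... wait, to the relevant eigen-piece of a chosen $\F_{j_0}$; the inverse reconstructs the whole chain using the transition isomorphisms and the perfect pairing to handle (\hyperlink{hyp:localmodel:N3}{N3}). Since $\Grass(s,n)$ is smooth, projective, geometrically irreducible of dimension $s(n-s)=sr=rs$, the claim follows, and geometric irreducibility over $E$ gives irreducibility. I expect the main technical obstacle to be bookkeeping the compatibility between the periodicity/duality conditions and the eigenspace splitting — in particular checking that (\hyperlink{hyp:localmodel:N3}{N3}), which pairs $\Lambda_{-j}\ot A$ with $\Lambda_j\ot A$, really does match the $\pi$-eigenspace of $\F_{-j}$ with the annihilator of the $(-\pi)$-eigenspace of $\F_j$ (the sign of $\pi$ flips under the pairing because $\langle\Pi x,y\rangle = -\langle x,\Pi y\rangle$ up to the similitude factor), so that no independent moduli are introduced and the two Grassmannian factors coming from the two eigenvalues are genuinely identified rather than multiplied. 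Once that sign chase is done, the dimension count is immediate.
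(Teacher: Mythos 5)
Your argument is correct, and it is essentially the content behind the paper's one-line proof, which simply cites sect.~1.e.3 of \cite{PR:2007}. The eigenspace decomposition over $E$ (or over $\bar{E}$ when $r=s$ and $\pi\notin E$), the rank count forced by (\hyperlink{hyp:localmodel:N4}{N4}), and the sign flip $\langle\Pi x,y\rangle=-\langle x,\Pi y\rangle$ that makes (\hyperlink{hyp:localmodel:N3}{N3}) identify the $\pi$-eigenspace datum with the annihilator of the $(-\pi)$-eigenspace datum are all exactly the right ingredients, and they do reduce the moduli problem to a single Grassmannian. One small point: the hedge ``or at worst a product of such over the finitely many orbits of indices'' is unnecessary and would actually be fatal to the dimension count if it occurred. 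It does not occur, and you should close that door: over any $E$-algebra $A$ the lattice inclusions $\Lambda_j\ot A\to\Lambda_{j'}\ot A$ are isomorphisms (the cokernel of $\Lambda_j\hookrightarrow\Lambda_{j'}$ is $\pi_0$-torsion, and $\pi_0$ is a unit in $E$), so the induced injections $\F_j\hookrightarrow\F_{j'}$ are inclusions of rank-$n$ direct summands of $A^{2n}$ with rank-$0$ projective quotient, hence equalities. Thus (\hyperlink{hyp:localmodel:N2}{N2}) collapses all the $\F_j$ to one, and together with (\hyperlink{hyp:localmodel:N3}{N3}) the generic fiber is a single $\Grass(s,n)$, geometrically irreducible of dimension $s(n-s)=rs$.
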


\begin{proof}
This is the statement of sect.~1.e.3.\ of \cite{PR:2007}.
\end{proof}

Because of Lem.~\ref{lem:localmodel:genericfiber}, the local model is also a closed subscheme of the wedge local model. Pappas and Rapoport \cite{PR:2007}*{Rem.\ 7.4)} give examples showing that in general the wedge condition is not sufficient to cut out the local model. In loc.cit., they propose one further condition (the so-called Spin condition) that should take care of this. Nevertheless, in some of the special cases we will consider below, the local model should already be given by the wedge local model; for a precise statement, see Rem.~\ref{rem:specialparahoric:flat}.

%-------------------------------------------------------------------------------

\section{Special Parahoric Level Structures}
We examine the local model $M_I^{\loc}$ for special choices of the index set $I$. If $n=2m+1$ is odd, we consider the cases $I=\{0\}$ and $I=\{m\}$; if $n=2m$ is even, we consider the case $I=\{m\}$. In sect.~1.b.3.\ of \cite{PR:2007}, it is shown that these are exactly the index sets for which the parahoric subgroups $P_I$ that preserve the lattice sets $\Lambda_i$ with $i\in I$ are \emph{special} in the sense of Bruhat--Tits theory \cite{T:1979}. The following theorem describes the special fiber of the corresponding local models.

\begin{thm}[main theorem]\label{thm:specialparahoric:main}
Let $I=\{0\}$ or $I=\{m\}$ if $n=2m+1$ is odd, and $I=\{m\}$ if $n=2m$ is even. Then the special fiber of the local model $M_I^{\loc}$ is irreducible and reduced; furthermore, the special fiber is normal, Frobenius split, and with only rational singularities.
\end{thm}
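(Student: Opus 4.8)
The plan is to split the assertion into two independent claims about the special fiber $\overline M_I^{\loc}$ — that it is irreducible, and that it is generically reduced — because, as noted in the introduction, once both hold the remaining properties (reducedness, normality, Frobenius splitting, rational singularities) follow verbatim from the global arguments of Pappas and Rapoport \cite{PR:2007}*{Proof of Thm.~5.1}. Throughout I would work inside the wedge local model $M_I^{\wedge}$, which by Lem.~\ref{lem:localmodel:genericfiber} has the same generic fiber as $M_I^{\naive}$ and therefore contains $M_I^{\loc}$ as a closed subscheme; since $M_I^{\loc}$ is by construction flat over $\OO_E$, Lem.~\ref{lem:localmodel:dimensiongenericfiber} shows that $\overline M_I^{\loc}$ is equidimensional of dimension $rs$. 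The mechanism for transporting information from $M_I^{\wedge}$ back to $M_I^{\loc}$ is twofold: if on some affine chart $M_I^{\wedge}$ is already flat over $\OO_E$ with the right generic fiber, then $M_I^{\loc}$ agrees with $M_I^{\wedge}$ on that chart; and more generally, if a chart of $\overline M_I^{\wedge}$ has a unique irreducible component of dimension $rs$, then equidimensionality of $\overline M_I^{\loc}$ forces the latter to coincide, as a set, with that component on the chart.

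\emph{Generic reducedness.} I would pick an affine chart of $M_I^{\wedge}$ around a distinguished \emph{best point} of the special fiber — a point at which the filtration $(\F_j)$ is as close as possible to being split under $\Pi$ — and write conditions (\hyperlink{hyp:localmodel:N1}{N1})--(\hyperlink{hyp:localmodel:N4}{N4}) together with the wedge condition (\hyperlink{hyp:localmodel:W}{W}) as explicit polynomial equations in the entries of a matrix describing $(\F_j)$. The claim, which is Thm.~\ref{thm:introduction:affine} (and a fortiori Thm.~\ref{thm:introduction:reduced}), is that after eliminating variables these equations leave a polynomial ring, so the chart is isomorphic to affine space $\A^{rs}$ over $\OO_E$; this is a direct, if delicate, computation, to be carried out separately for $I=\{0\}$ and $I=\{m\}$ in the odd case and for $I=\{m\}$ in the even case. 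Granting this, the chart is flat with generic fiber $\A^{rs}$, hence equals $M_I^{\loc}$ there, so $\overline M_I^{\loc}$ contains the reduced (indeed smooth) open subset $\A^{rs}$ over $k$.

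\emph{Irreducibility.} Around the \emph{worst point} — the most degenerate point of the special fiber, which lies in the closure of every irreducible component and on which the relevant group action collapses — the analogous chart of $\overline M_I^{\wedge}$ is no longer affine space, but a matrix scheme $X$ cut out by polynomial relations. The symplectic group attached to the symmetric form $(\ ,\ )$ acts on $X$, and there is an $\Sp$-equivariant projection $X\to Y$ onto a space of smaller dimension whose fibers, following the reduction in \cite{PR:2007}*{sect.~5.e}, are identified with closures of nilpotent orbits in the symmetric pair $(\mathfrak{gl}_n,\mathfrak{sp}_n)$. Invoking Ohta \cite{O:1986}*{Prop.~1 and Thm.~1} and Kostant--Rallis \cite{KR:1971}*{Prop.~5 and its proof}, these orbit closures are irreducible of known dimension, so adding base and fiber dimensions shows that $X$ has exactly one irreducible component of dimension $rs$, all others being strictly smaller. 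Since $\overline M_I^{\loc}$ is a closed, equidimensional $rs$-dimensional subscheme of $X$ containing the worst point, its reduction near the worst point must be this unique top-dimensional component and is therefore irreducible; as the worst point lies on every component of $\overline M_I^{\loc}$, the entire special fiber is irreducible.

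Combining the two steps, $\overline M_I^{\loc}$ is irreducible and generically reduced, hence reduced, and then normal, Frobenius split, and with only rational singularities by \cite{PR:2007}*{Proof of Thm.~5.1}. I expect the decisive obstacle to be the irreducibility step: correctly setting up the equivariant projection at the worst point, matching its fibers with the right nilpotent orbit closures in $(\mathfrak{gl}_n,\mathfrak{sp}_n)$, and carrying out the dimension bookkeeping so that precisely one component of the maximal dimension $rs$ survives — all of this uniformly across the parity cases and the two admissible index sets. The generic-reducedness step is comparatively mechanical, but still hinges on a case-by-case choice of coordinates at the best point that makes the wedge equations collapse to those of affine space.
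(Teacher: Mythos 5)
Your proposal matches the paper's proof strategy essentially point for point: isolate generic reducedness and irreducibility, work inside the wedge local model near a best point (obtaining an affine chart of dimension $rs$) and near the worst point (obtaining a matrix scheme with a symplectic group action, analysed via an equivariant projection and the Ohta/Kostant--Rallis results on nilpotent orbits), then pass back to $M_I^{\loc}$ by equidimensionality and finish with the Pappas--Rapoport machinery. The one thing you overlook is that two of the three cases ($I=\{0\}$, $n=2m+1$ odd, and $I=\{m\}$, $n=2m$ even) are already established by Pappas--Rapoport, so the worst-point irreducibility argument only needs to be carried out for the remaining case $n=2m+1$ odd, $I=\{m\}$ — which is what the paper does; in the other two cases the symmetric pair at the worst point would in any case not be $(\mathfrak{gl}_n,\mathfrak{sp}_n)$, so the claim that one can run the orbit analysis "uniformly across the parity cases" with that pair would not hold up if you tried to reprove them from scratch.
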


\begin{rem}\label{rem:specialparahoric:flat}
Pappas and Rapoport conjecture that under the assumptions of the main theorem, the wedge local model $M_I^{\wedge}$ is flat---provided that $s$ is even if $n$ is even. Confer Rem.~5.3 of their paper \cite{PR:2007}.
\end{rem}

\begin{rem}\label{rem:specialparahoric:Rapoport}
The cases $n=2m+1$ odd, $I=\{0\}$ and $n=2m$ even, $I=\{m\}$ have been treated by Pappas and Rapoport \cite{PR:2007}*{Thm.~5.1}. Calculations for the low-dimensional case $n=3$ odd, $I=\{1\}$ have been given in Prop.~6.2 of loc.cit. The arguments, however, cannot be generalized directly to the case of general $n=2m+1$ odd, $I=\{m\}$.
\end{rem}

\begin{proof}[Proof of the main theorem]
By Rem.~\ref{rem:specialparahoric:Rapoport}, we have to deal with the case of general $n=2m+1$ odd, $I=\{m\}$. Essentially, two results are required for the proof; these are obtained in the next two sections, where we first show that the special fiber of the local model contains an open subset that is reduced (Thm.~\ref{thm:reduced:reduced}) and then that the special fiber of the local model is irreducible (Thm.~\ref{thm:irreducibility:irreducible}).

The first result will be achieved by showing that the wedge local model contains an affine space of appropriate dimension as open subset (Prop.~\ref{prp:reduced:affine}), followed by some dimension arguments, which imply that this affine space is already lying in the local model. The second result will be deduced by considering an open neighborhood of a point which is contained in all irreducible components of the special fiber of the local model that can possibly exist (Prop.~\ref{prp:irreducibility:irreducibleneighborhood}). This is constructed by first examining the special fiber of the wedge local model and then intersecting with the local model.

Once we know that the special fiber of the local model is irreducible and generically reduced, the remaining assertions follow by standard arguments, as given by Pappas and Rapoport \cite{PR:2007}*{Proof of Thm.~5.1}. In particular, the main result of one of their previous papers is used to deduce the three properties ``normal, Frobenius split, and with only rational singularities'' \cite{PR:2006}*{Thm.~8.4}.
\end{proof}

%-------------------------------------------------------------------------------

\section{Open Reduced Subset of the Special Fiber}\label{sec:reduced}
Recall from the definition of the naive local model that we have fixed the signature type $(r,s)$ of the unitary group. The first result required in the proof of the main theorem is the next statement.

\begin{thm}[first step]\label{thm:reduced:reduced}
Let $n=2m+1$ be odd and $I=\{m\}$. Then the local model $M_I^{\loc}$ contains an affine space of dimension $rs$ as open subset. In particular, the special fiber of the local model contains an open subset that is reduced.
\end{thm}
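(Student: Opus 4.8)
The plan is to work directly with the wedge local model $M_I^{\wedge}$ for $n=2m+1$, $I=\{m\}$, and to produce an explicit open chart of it which, as a scheme over $\OO_E$, is isomorphic to affine space $\A^{rs}_{\OO_E}$. Concretely, one fixes a convenient $\OO_F$-basis of the single standard lattice $\Lambda_m$ adapted to the hermitian form and to the selfduality condition (\hyperlink{hyp:localmodel:N3}{N3}); since $I=\{m\}$ consists of one element, a point of $M_I^{\wedge}$ over an $\OO_E$-scheme $S$ amounts essentially to a single submodule $\F\subset\Lambda_m\ot\OO_S$ satisfying (\hyperlink{hyp:localmodel:N1}{N1})--(\hyperlink{hyp:localmodel:N4}{N4}) together with the wedge condition (\hyperlink{hyp:localmodel:W}{W}) (the selfduality here is a constraint relating $\F$ to itself, because $\Lambda_m$ is its own dual up to the relevant twist). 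Using the $\OO_{F_0}\oplus\OO_{F_0}\pi$-decomposition as in the proof of Lem.~\ref{lem:localmodel:genericfiber}, the matrix $\Pi$ becomes the block-diagonal matrix with blocks $\twomats{}{\pi_0}{1}{}$, and the data of $\F$ (a rank-$n$ direct summand of a rank-$2n$ module) is encoded, on the standard affine chart of the Grassmannian where $\F$ projects isomorphically onto a chosen coordinate subspace, by a matrix of free parameters over $\OO_S$.

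The core computation is then to feed conditions (\hyperlink{hyp:localmodel:N4}{N4}) and (\hyperlink{hyp:localmodel:W}{W}) into this parametrization around the distinguished ``best point'' of sect.~\ref{ssec:reduced:bestpoint}. The expectation --- and this is the heart of the first step --- is that on a suitable such chart the characteristic-polynomial condition (\hyperlink{hyp:localmodel:N4}{N4}), the wedge conditions $\wedge^{r+1}(\Pi-\sqrt{\pi_0})=0$ and $\wedge^{s+1}(\Pi+\sqrt{\pi_0})=0$, and the selfduality condition (\hyperlink{hyp:localmodel:N3}{N3}) together reduce to equations that simply \emph{solve} for some of the matrix entries as polynomials in the remaining ones, leaving exactly $rs$ genuinely free parameters and no further relations. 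That would exhibit this open subscheme $\U\subset M_I^{\wedge}$ as $\Spec$ of a polynomial ring over $\OO_E$ in $rs$ variables, i.e.\ $\U\iso\A^{rs}_{\OO_E}$; in particular $\U$ is flat over $\OO_E$, irreducible, reduced, and of relative dimension $rs$.

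To pass from the wedge local model to the local model $M_I^{\loc}$ one uses that $M_I^{\loc}\subset M_I^{\wedge}$ is a closed subscheme with the same generic fiber. The open set $\U$ meets the generic fiber of $M_I^{\wedge}$, which equals the generic fiber of $M_I^{\loc}$ and which by Lem.~\ref{lem:localmodel:dimensiongenericfiber} is irreducible of dimension $rs$. Since $M_I^{\loc}$ is flat over $\OO_E$, its special fiber is equidimensional of dimension $rs$; hence $\U\cap M_I^{\loc}$, being a nonempty closed subscheme of the irreducible $rs$-dimensional scheme $\U$ whose generic fiber is already all of $\U_E$, must be dense in $\U$, and a closed subscheme of the integral scheme $\U$ that is dense is all of $\U$. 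Therefore $\U\subset M_I^{\loc}$, so $M_I^{\loc}$ contains $\A^{rs}_{\OO_E}$ as an open subscheme; intersecting with the special fiber gives an open subset of the special fiber isomorphic to $\A^{rs}_k$, which is reduced, proving the ``in particular'' clause and hence Thm.~\ref{thm:reduced:reduced}.

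\textbf{Main obstacle.} The delicate part is the explicit linear algebra in the middle step: one must choose the affine chart and the basis so cleverly that the simultaneous imposition of (\hyperlink{hyp:localmodel:N3}{N3}), (\hyperlink{hyp:localmodel:N4}{N4}), and the two wedge conditions collapses to a triangular system expressing redundant entries in terms of $rs$ free ones, with \emph{no} leftover obstruction. A priori the wedge conditions are highly nonlinear (they involve all $(r+1)\times(r+1)$ and $(s+1)\times(s+1)$ minors of certain matrices), so the real content is to verify that near the best point these minors either vanish identically or reproduce relations already forced by (\hyperlink{hyp:localmodel:N4}{N4}); checking this, and in particular getting the dimension count to come out to exactly $rs$ rather than something larger or smaller, is where the case $n=2m+1$, $I=\{m\}$ genuinely differs from the cases already handled by Pappas and Rapoport and is the technical crux of the section.
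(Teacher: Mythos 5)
The second half of your proposal --- passing from the affine chart $\U\subset M_I^{\wedge}$ to the local model by observing that $\U_E\subset(M_I^{\loc})_E$, that $\U_E$ is dense in the integral scheme $\U\iso\A^{rs}_{\OO_E}$, and that therefore the closed subscheme $\U\cap M_I^{\loc}$ equals $\U$ --- is correct and in fact slightly cleaner than the paper's argument (which uses equidimensionality of the special fiber via flatness and then compares reduced schemes in the chain $M_I^{\loc}\cap U\subset M_I^{\wedge}\cap U\subset Y\cap U$). The paper also needs the extra scheme $Y$ because Prop.~\ref{prp:reduced:affine} is only fully verified by the dimension argument (Rem.~\ref{rem:reduced:conditionssufficed}); your density argument absorbs that issue as well.

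The first half, however, misdescribes the case $n=2m+1$, $I=\{m\}$ at a point that would derail the computation. You assert that the data reduces to a single submodule $\F\subset\Lambda_m\ot\OO_S$ with a selfduality constraint on $\F$ alone, ``because $\Lambda_m$ is its own dual up to the relevant twist.'' This is false for $I=\{m\}$: one has $\hat\Lambda_m=\pi\Lambda_{m+1}$, equivalently $\hat\Lambda_m^{\text{s}}=\Lambda_{m+1}\neq\Lambda_m$, so the selfdual periodic chain genuinely contains two distinct lattices $\Lambda_m$ and $\Lambda_{m+1}$. The correct data is a \emph{pair} $(\F,\G)$ with $\F\subset\Lambda_m\ot\OO_S$, $\G\subset\Lambda_{m+1}\ot\OO_S$, and (\hyperlink{hyp:reduced:N3}{N3}) is the equation $\G=\F^\perp$ for the pairing between the two different lattices. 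As a consequence you have omitted from your ``core computation'' the condition (\hyperlink{hyp:reduced:N2}{N2}) that the lattice inclusion maps $\Lambda_m\to\Lambda_{m+1}\to\pi^{-1}\Lambda_m$ restrict to $\F\to\G\to\pi^{-1}\F$. In the paper it is precisely this condition, expressed as $\F^{\T}\,\mat{A}^{\T}\,\mat{M}\,\F=0$, that produces the equations (\hyperlink{hyp:reduced:C1}{C1})--(\hyperlink{hyp:reduced:C10}{C10}) which determine the $c$- and $d$-variables from the $a$-variables and cut the free $a$-variables down; without it the dimension count does not land on $rs$. The picture you describe --- a single self-dual $\Lambda_0$, trivial lattice inclusion, (\hyperlink{hyp:other:N3}{N3}) as the only extra constraint --- is exactly the \emph{other} odd case $I=\{0\}$, which the paper handles separately in sect.~\ref{ssec:other:odd} and which was already done by Pappas and Rapoport. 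So while your flat-closure argument at the end is sound, the plan for the middle step would not close the case the theorem is actually about.
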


We will first prove a corresponding statement for the wedge local model; from that, the theorem will be derived.

\begin{prp}\label{prp:reduced:affine}
Let $n=2m+1$ be odd and $I=\{m\}$. Then the wedge local model $M_I^{\wedge}$ contains an affine space of dimension $rs$ as open subset.
\end{prp}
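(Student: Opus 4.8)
\emph{Proof idea.} The plan is to write down an explicit affine chart of $M_I^{\wedge}$ around a distinguished point of its special fibre, the \emph{best point}, and to show that in this chart the conditions defining the wedge local model cut out precisely an affine space of dimension $rs$ over $\OO_E$.

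First I would reduce the moduli problem to data attached to the single lattice $\Lambda_m$. Since $n=2m+1$ and $I=\{m\}$, one has $n-m=m+1$ and $\Lambda_{m+1}=\hat{\Lambda}^{\text{s}}_m$, so within one period the chain is merely $\Lambda_m\subset\Lambda_{m+1}\subset\pi^{-1}\Lambda_m$. Hence an $S$-valued point of $M_I^{\naive}$ amounts to a single $\OO_F\ot\OO_S$-submodule $\F:=\F_m\subset\Lambda_m\ot\OO_S$: the module $\F_{m+1}$ is forced by periodicity together with (\hyperlink{hyp:localmodel:N3}{N3}), conditions (\hyperlink{hyp:localmodel:N2}{N2})--(\hyperlink{hyp:localmodel:N4}{N4}) for the index $m+1$ are automatic once they hold for $m$, and what is left is that $\F$ be a rank-$n$ direct summand, be stable under $\Pi$, satisfy $\det(T\;\id-\Pi|\F)=(T-\pi)^s(T+\pi)^r$, and obey a pair of relations linking $\F$, $\Pi\F$, and the orthogonal complement $\F^\perp$ that come from (\hyperlink{hyp:localmodel:N2}{N2})--(\hyperlink{hyp:localmodel:N3}{N3}); the wedge local model imposes in addition (\hyperlink{hyp:localmodel:W}{W}). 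I would then fix the $\OO_S$-basis of $\Lambda_m\ot\OO_S$ given by the pairs $\pi^{-1}e_i,e_i$ for $i\le m$ and $e_i,\pi e_i$ for $i>m$---in which $\Pi$ is block diagonal with blocks $\twomats{}{\pi_0}{1}{}$---and record all of the above as matrix identities.

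Next I would pass to a chart around the best point. Let $\F^0\subset\Lambda_m\ot\OO_E$ be the free $\OO_E$-submodule generated by $e_1,\dots,e_m$ and $\pi e_{m+1},\dots,\pi e_n$; its reduction modulo $\pi$ equals the kernel, equivalently the image, of $\Pi$ on $\Lambda_m\ot k$, and this is the best point. It lies in $M_I^{\wedge}(k)$: it is a rank-$n$ direct summand, $\Pi$ acts as zero on it---so its characteristic polynomial is $T^n=(T-\pi)^s(T+\pi)^r$ over $k$ and both wedge conditions hold trivially---and the relations from the first step are immediate because $\Pi\F^0=0$ there. Let $\mathcal{C}\subset\Lambda_m\ot\OO_E$ be the complementary $\OO_E$-submodule generated by $\pi^{-1}e_1,\dots,\pi^{-1}e_m$ and $e_{m+1},\dots,e_n$, and let $U\subset M_I^{\wedge}$ be the open subscheme on which $\F$ is fibrewise transversal to $\mathcal{C}$; this is an open neighbourhood of the best point. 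On $U$ the module $\F$ is the graph of an $\OO_S$-linear map $X\colon\F^0_S\to\mathcal{C}_S$, so $U$ is identified with the closed subscheme of $\A^{n^2}_{\OO_E}$---whose coordinates are the entries of the $n\times n$ matrix $X$---that is cut out by the equations arising from (\hyperlink{hyp:localmodel:N2}{N2})--(\hyperlink{hyp:localmodel:N4}{N4}) and (\hyperlink{hyp:localmodel:W}{W}).

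Finally I would translate those conditions into explicit polynomial equations in the entries of $X$ and solve them, expecting the mechanism found by Pappas and Rapoport in the other special cases \cite{PR:2007}*{sect.~5}: stability under $\Pi$ expresses one ``half'' of the block entries of $X$ as polynomials in the other half; the characteristic polynomial condition and the perpendicularity relations then determine the remaining trace-type and symmetry-type parts; and the wedge conditions---\emph{a priori} a large family of minors of sizes $r+1$ and $s+1$---should collapse on this particular chart to the vanishing of all entries outside a single $r\times s$ block $Y$ of free variables. If this goes through, $U$ is the graph of a polynomial map over $\Spec\OO_E[Y]$, hence $U\iso\A^{rs}_{\OO_E}$, an affine space of dimension $rs$ open in $M_I^{\wedge}$; the value $rs$ agrees with the dimension of the generic fibre (Lem.~\ref{lem:localmodel:dimensiongenericfiber}), a reassuring check that no relation has been overlooked. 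The substantive difficulty lies entirely in this last step: transcribing the chain and self-duality conditions correctly in the transversality chart, and then verifying that the wedge and characteristic polynomial conditions, together with them, cut out exactly an affine space---in particular that on this chart the numerous wedge minors reduce to the ``obvious'' linear relations and contribute neither a further irreducible component nor any embedded structure. Determining the precise shape of the relations from the first step, and checking that the best point indeed lies in $M_I^{\wedge}$ and in the chart $U$, are the necessary but comparatively routine preliminaries.
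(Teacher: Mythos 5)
The critical error is in the choice of chart: the point $\F^0$ you place at the center of your chart is the \emph{worst} point, not the best point. You take $\F^0$ generated by $e_1,\dots,e_m,\pi e_{m+1},\dots,\pi e_n$, whose reduction mod $\pi$ is $\ker(\Pi|\Lambda_m\ot k)=\im(\Pi|\Lambda_m\ot k)=\Pi\,\Lambda_m\ot k$. But this is precisely the point $\F_0$ of sect.~\ref{ssec:irreducibility:worstpoint} --- the unique point in the closed orbit, contained in every irreducible component of the special fiber, hence the one with the worst singularities. The best point of sect.~\ref{ssec:reduced:bestpoint} and Rem.~\ref{rem:reduced:bestpoint} is instead $\F_1=\spn_k\{f_1,\dots,f_s,\pi f_1,\dots,\pi f_r\}$ in the $\OO_F$-basis $f_i$ of $\Lambda_m$, i.e.\ the span of $s$ basis vectors from the ``first half'' and $r$ from the ``second half.'' On $\F_1$ the operator $\Pi$ has rank $s$, not rank zero, and the chart around it is $\Grass_{n,2n}^J$ for the complement $J$ of the indices of $f_1,\dots,f_s,\pi f_1,\dots,\pi f_r$, leading to matrices $\F=\bigl(\begin{smallmatrix}\I_s&\\ \mat{a}&\mat{b}\\ &\I_r\\ \mat{c}&\mat{d}\end{smallmatrix}\bigr)$. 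Only in this chart do the Pi-stability, wedge, and lattice-inclusion conditions conspire to eliminate $\mat{b}$ and $\mat{d}$ and leave $rs$ free $\mat{a}$- and $\mat{c}$-variables.

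The chart you propose --- $J$ picking out all of $f_1,\dots,f_n$, so $\F=\bigl(\begin{smallmatrix}\mat{X}\\\I_n\end{smallmatrix}\bigr)$ --- is exactly the one the paper analyzes in section~4 for the irreducibility proof, and there the equations do \emph{not} collapse to an affine space. After imposing (\hyperlink{hyp:reduced:N2}{N2})--(\hyperlink{hyp:reduced:N3}{N3}) and (\hyperlink{hyp:reduced:Pi}{Pi}), one is left with a square block $\mat{X_1}$ of size $2m$ and a row vector $\mat{X_3}$ satisfying a nilpotency condition $\mat{X_1}^2=0$, a twisted (anti)symmetry condition $\mat{X_1}+\sigma(\mat{X_1})=-\J_{2m}\,\mat{X_3}^{\T}\,\mat{X_3}$, a divisibility condition $\mat{X_3}\,\mat{X_1}=0$, and a wedge rank bound. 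The resulting scheme $N$ is a union of closures of $\Sp_{2m}$-orbits, in general with several irreducible components, only one of which has the expected dimension $rs$; establishing even that requires the orbit-closure results of Ohta and of Kostant--Rallis cited in the paper. So your plan that ``the numerous wedge minors reduce to the obvious linear relations'' and that $U\iso\A^{rs}_{\OO_E}$ is false in your chart --- indeed, if it were true, the whole of section~4 of the paper would be unnecessary. A secondary issue: the $\OO_E$-submodule $\F^0$ you write down is not $\Pi$-stable over $\OO_E$ (one has $\Pi(\pi f_i\ot1)=\pi_0 f_i\ot1\notin\F^0$), so it is not even an $\OO_E$-point of $M_I^{\wedge}$; the worst point is genuinely a point of the special fiber only. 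The rest of your outline --- reduction to a single lattice $\Lambda_m$ with $\G=\F^\perp$, passage to a transversality chart, and translating the conditions into matrix identities --- is sound and parallels the paper, but it must be carried out around the best point to yield an affine space.
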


\begin{proof}
Before starting the actual proof, which ranges over the remaining subsections, we introduce some matrices that will frequently occur from now on.

We write $\I_l$ for the unit matrix of size $l$
\[\I_l:=\threemat{1}{}{}{}{\ddots}{}{}{}{1}\]
and $\Hh_l$ for the unit antidiagonal matrix of size $l$
\[\Hh_l:=\threemat{}{}1{}{\iddots}{}{1}{}{}\;.\]
The matrix $\J_{k,l}$ is given by the antidiagonal matrix of size $k+l$
\[\J_{k,l}:=\twomat{}{\Hh_l}{-\Hh_k}{}\;.\]
The special case $k=l$ is abbreviated to $\J_{2k}:=\J_{k,k}$.

%...............................................................................

\subsection{Best Point}
\label{ssec:reduced:bestpoint}
Recall from sect.~\ref{ssec:localmodel:latticechain} the notion of the parahoric subgroup $P_I$: in the current situation of odd $n$, it is the stabilizer subgroup preserving the lattice chain $\Lambda_I$. This group acts on the special fibers of the models $M_I^{\naive}$, $M_I^{\wedge}$, and $M_I^{\loc}$. In sect.~3.c. of their paper, Pappas and Rapoport \cite{PR:2007} construct an embedding of the geometric special fiber of the naive local model into a partial affine flag variety (associated to the unitary similitude group).
This closed immersion is equivariant for the action of the parahoric, and thus its image is a union of Schubert varieties, which are enumerated by certain elements of the corresponding affine Weyl group.

In Prop.~3.1 of loc.cit., it is shown that the union of Schubert varieties over elements of the so-called $\mu$-admissible set is contained in the geometric special fiber of the local model. This union is denoted by $\AAA^I(\mu)$ in the notation of loc.cit.\ and is closed (since the $\mu$-admissible set is closed under the Bruhat order). In sect.~3.d of loc.cit., points of the local model are constructed that reduce to points lying in the Schubert varieties corresponding to the extreme elements of the $\mu$-admissible set. The open subset of the local model we are about to construct will contain one of these ``best points''.\footnote{A posteriori, we can see that in the situation under consideration, there is only a single extreme orbit, see Rem.~\ref{rem:irreducibility:oneextremeorbit}.}

%...............................................................................

\subsection{Conditions of the Wedge Local Model}\label{ssec:reduced:wedgelocalmodel}
We specialize the definition of the wedge local model to the case $n=2m+1$ odd, $I=\{m\}$. The essential part of the periodic lattice chain is given by
\[\ldots\rightarrow\Lambda_m\rightarrow\Lambda_{m+1}\rightarrow\ldots\;,\]
with $\Lambda_m$ and $\Lambda_{m+1}$ being the standard lattices
\begin{gather*}
\Lambda_m=\spn_{\OO_F}\{\pi^{-1}e_1,\dots,\pi^{-1}e_m,e_{m+1},\dots,e_n\}\;,\\
\Lambda_{m+1}=\spn_{\OO_F}\{\pi^{-1}e_1,\dots,\pi^{-1}e_{m+1},e_{m+2},\dots,e_n\}\;.
\end{gather*}
Denoting the above basis of $\Lambda_m$ by $f_1,\dots,f_n$ and that of $\Lambda_{m+1}$ by $g_1,\dots,g_n$, we have corresponding $\OO_{F_0}$-bases $f_1,\dots,f_n,\pi f_1,\dots,\pi f_n$ and $g_1,\dots,g_n$,$\pi g_1,\dots,\pi g_n$, respectively.

We have to examine $A$-valued points of $M_I^{\wedge}$, with $A$ an arbitrary $\OO_E$-algebra. This means considering $\OO_F\ot A$-submodules
\begin{gather*}
\F\subset\Lambda_m\ot A\;,\\
\G\subset\Lambda_{m+1}\ot A%\;, %PNCT
\end{gather*}
subject to the conditions of the wedge local model. These translate into:
\begin{enumerate}
\item[(\hypertarget{hyp:reduced:N1}{N1})] As $A$-modules, $\F$ and $\G$ are locally direct summands of rank $n$. Identifying $\Lambda_m\ot A$ and $\Lambda_{m+1}\ot A$ with $A^{2n}$ via the above $\OO_{F_0}$-bases, we can consider $\F$ and $\G$ as $A$-valued points of the Grassmannian $\Grass_{n,2n}$.
\item[(\hypertarget{hyp:reduced:N2}{N2})] The maps induced by the inclusions $\Lambda_m\subset\Lambda_{m+1}$ and $\Lambda_{m+1}\subset\pi^{-1}\Lambda_m$ restrict to maps
\[\F\to\G\to\pi^{-1}\F\;.\]
Here $\pi^{-1}\F$ is the image of $\F$ under the map induced by the isomorphism $\pi^{-1}:\ \Lambda_m\to\pi^{-1}\Lambda_m$. %ADD: Goertz
\item[(\hypertarget{hyp:reduced:N3}{N3})] $\G=\F^\perp$, with $\F^{\perp}$ denoting the orthogonal complement of $\F$ under the natural perfect pairing
\beqn\label{eq:reduced:perfpair}(\mbox{ , })\ot A:\;(\Lambda_{m+1}\ot A)\times(\Lambda_m\ot A)\rightarrow A\;.\eeqn
With respect to the chosen bases, the form is represented by the $2n\times2n$-matrix
\[\mat{M}:=\twomat{}{-\J_{m,m+1}}{\J_{m,m+1}}{}\;.\]
\item[(\hypertarget{hyp:reduced:N4}{N4})] The characteristic polynomial of $\Pi|\F$ is given by
\[\det(T\;\id-\Pi|\F)=(T-\pi)^s(T+\pi)^r\in A[T]\;,\]
and the analogous statement holds true for $\G$.
\item[(\hypertarget{hyp:reduced:W}{W})] We have
\begin{gather*}
\wedge^{r+1}(\Pi-\sqrt{\pi_0}|\F)=0\;,\\
\wedge^{s+1}(\Pi+\sqrt{\pi_0}|\F)=0\;,
\end{gather*}
and the analogous statement holds true for $\G$.
\end{enumerate}
Viewing $\F$ and $\G$ as $A$-modules, the fact that they are required to be modules over $\OO_F\ot A$ translates into an additional condition:
\begin{enumerate}
\item[(\hypertarget{hyp:reduced:Pi}{Pi})] $\F$ and $\G$ are $\Pi$-stable.
\end{enumerate}
These conditions will be evaluated in the following subsections.

%...............................................................................

\subsection{Orthogonal Complement}
Condition (\hyperlink{hyp:reduced:N3}{N3}) implies that the subspace $\G$ is determined by $\F$ as its orthogonal complement. We denote by $W$ the corresponding subfunctor of $\Grass_{n,2n}\times\Grass_{n,2n}$ that satisfies (\hyperlink{hyp:reduced:N3}{N3}). Then the projection onto the first factor, $\pr_{\F}:\ \Grass_{n,2n}\times\Grass_{n,2n}\to \Grass_{n,2n}$, restricts to an isomorphism of functors:
\[{\pr_{\F}}|W:\ W\stackrel{\sim}{\rightarrow}\Grass_{n,2n}\;.\]
This is because the assignment $\F\mapsto(\F,\F^{\perp})$ on $A$-valued points induces an inverse morphism, as can be seen from the explicit determination of the orthogonal complement in Lem.~\ref{lem:reduced:orthogonalcomplement} below. Since our objective is to construct an open subset of the wedge local model, we may restrict ourselves to considering subfunctors of $W$ that are induced via the isomorphism $\pr_{\F}$ by open subfunctors of $\Grass_{n,2n}$.

Recall that the Grassmann functor is covered by the open subfunctors
\begin{align*}
\Grass_{n,2n}^J(A):=\{&\U\in\Grass_{n,2n}(A);\ \\ &\OO_A^J\hookrightarrow\OO_A^{2n}\twoheadrightarrow\OO_A^{2n}/\U\ \text{is an isomorphism}\}\;,
\end{align*}
where $J\subset\{1,\dots,2n\}$ is a subset consisting of $n$ elements, and the arrows denote the obvious homomorphisms. These functors are represented by affine space of dimension $n^2$.

We consider the complement $J$ of the index set that corresponds to the basis elements $f_1,\dots,f_s$, $\pi f_1,\dots,\pi f_r$ (for a motivation of this choice, see Rem.~\ref{rem:reduced:bestpoint}). The elements of $\Grass_{n,2n}^J(A)$ can be described as the column span of $2n\times n$-matrices~$\F$ having entries in $A$ and being of the following form:
\beqn\label{eq:reduced:F}\F=\fourtwomat{\I_s}{}{\mat{a}}{\mat{b}}{}{\I_r}{\mat{c}}{\mat{d}}\;.\eeqn
Here the submatrix $\mat{a}$ has $r$ rows and $s$ columns, $\mat{d}$ has $s$ rows and $r$ columns, and as usual, $\I_s$ and $\I_r$ are the unit matrices of sizes $s$ and $r$, respectively. An impression of the ratio of the respective blocks can be received from Fig.~\ref{fig:reduced:matrix}.
\begin{figure}
\includegraphics{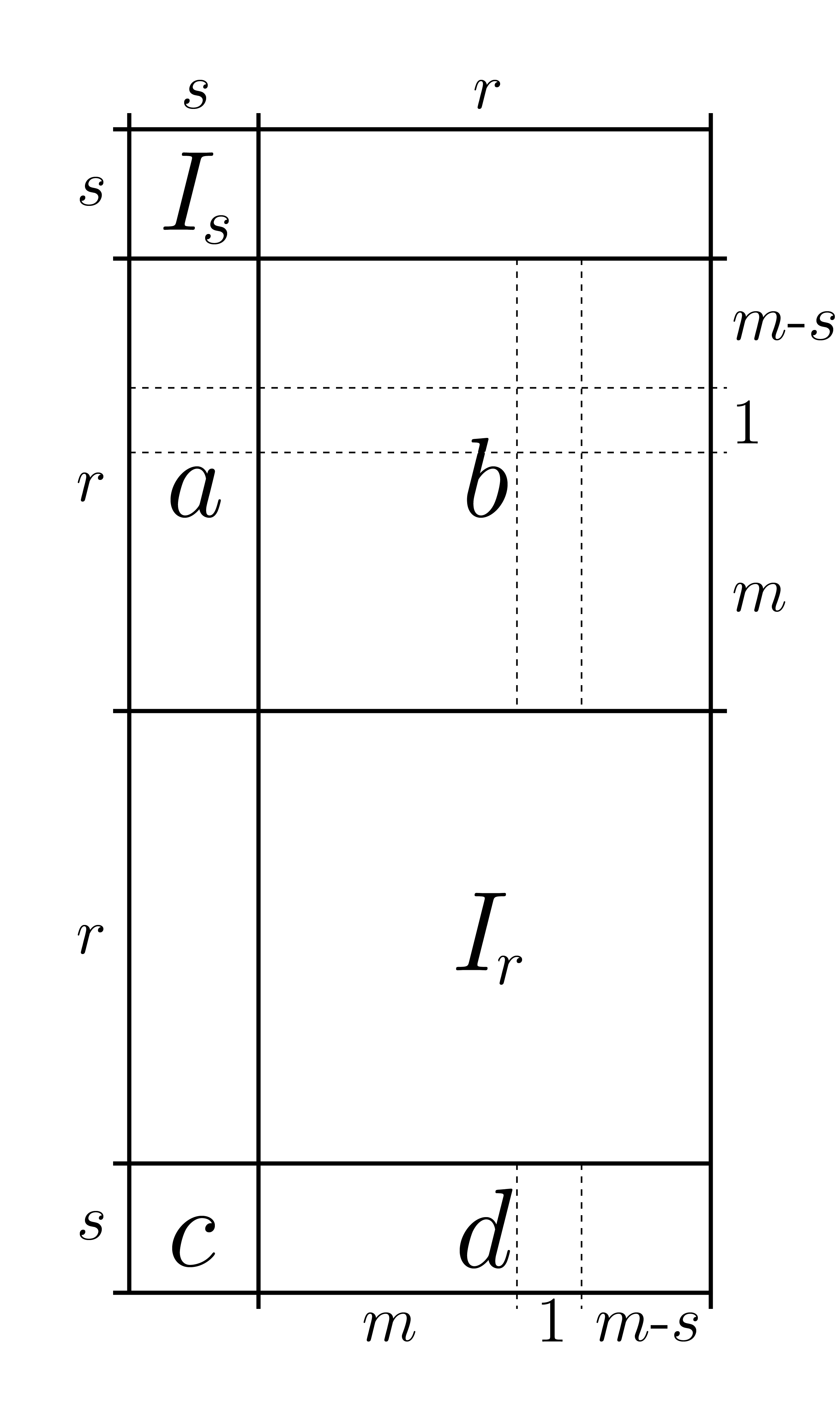}
\caption{Typical form of the matrix $\F$ (for $n=2m+1$ odd). The partitioning shown corresponds to $n=9$ and $s=2$ (then $m=4$ and $r=5$). The solid lines separate the main blocks; the dotted lines indicate a finer subdivision helpful for the upcoming calculations. The labels outside denote the sizes of the blocks.}
\label{fig:reduced:matrix}
\end{figure}
We denote the subspace $\F$ and the matrix representing it (as a column span) by the same symbol. This should not lead to any confusion, since the intended meaning will be clear from the context.

To describe the orthogonal complement of $\F$ in a clear way, we introduce further notations. For the moment, let $\mat{B}$ be an arbitrary matrix with $k$ rows and $l$ columns. We define the involution $\iota$ as follows:
\[\iota(\mat{B}):=\Hh_l\;\mat{B}^{\T}\;\Hh_k\;.\]
This is the matrix obtained from $\mat{B}$ by reflection at the first angle bisector going through the lower left matrix entry (which is precisely the antidiagonal in the case of a square matrix).
Assuming $i\le k$, we denote by $\mat{B}^{[i]}$ the matrix consisting of the first $i$ rows of $B$ and by $\mat{B}_{[i]}$ the matrix consisting of the last $i$ rows. The $i$th row is denoted by $\mat{B}^{(i)}$. Likewise, assuming $j\le l$, we write $^{[j]}\mat{B}$, $_{[j]}\mat{B}$, and $^{(j)}\mat{B}$ for the first $j$ columns of $B$, the last $j$ columns, and the $j$th column, respectively. We refer to the single matrix entry in the $i$th row and $j$th column as $\mat{B}_{i,j}$.

\begin{lem}\label{lem:reduced:orthogonalcomplement}
With respect to the perfect pairing \eqref{eq:reduced:perfpair}, the orthogonal complement of $\F$ is given by the column span of the matrix
\[\G=\fourtwomat{\I_s}{}{\mat{\tilde{a}}}{\mat{\tilde{b}}}{}{\I_r}{\mat{\tilde{c}}}{\mat{\tilde{d}}}\;,\]
with
\begin{gather*}
\mat{\tilde{a}}=\twovec{-\iota(_{[r-m]}\mat{d})}{\iota(^{[m]}\mat{d})}\;,\\
\mat{\tilde{b}}=\twomat{\iota(_{[r-m]}\mat{b}_{[m+1]})}{-\iota(_{[r-m]}\mat{b}^{[m-s]})}{-\iota(^{[m]}\mat{b}_{[m+1]})}{\iota(^{[m]}\mat{b}^{[m-s]})}\;,\\
\mat{\tilde{c}}=-\iota(\mat{c})\;,\\
\mat{\tilde{d}}=\twovex{\iota(\mat{a}_{[m+1]})}{-\iota(\mat{a}^{[m-s]})}\;.
\end{gather*}
\end{lem}

\begin{proof}
$\G$ is a subspace of rank $n$, and one calculates $\G^{\T}\;\mat{M}\;\F=\I_{2n}$ (recall that $M$ is the matrix representing the perfect pairing).
\end{proof}

\begin{rem}\label{rem:reduced:bestpoint}
It can be easily checked that $(\F_1,\G_1)\in\Grass_{n,2n}^J(k)\times\Grass_{n,2n}^J(k)$,
given by the $k$-subspaces
\begin{gather*}
\F_1:=\spn_k\{f_1,\dots,f_s,\pi f_1,\dots,\pi f_r\}\;,\\
\G_1:=\spn_k\{g_1,\dots,g_s,\pi g_1,\dots,\pi g_r\}\;,
\end{gather*}
satisfies the conditions of the wedge local model and, thus, represents a point of the special fiber of the wedge local model (in the above notation, $\F_1$ corresponds to $a=b=c=d=0$ and $\G_1$ corresponds to $\tilde{a}=\tilde{b}=\tilde{c}=\tilde{d}=0$). More precisely, this is one of the special points mentioned in sect.~\ref{ssec:reduced:bestpoint}: this follows from sect.~3.d of \cite{PR:2007} by considering (in the notation of loc.cit.) the subset $S=[n+1-s,n]$. It follows that $(\F_1,\G_1)$ is lying in the special fiber of the local model.
\end{rem}

%...............................................................................

\subsection{Pi-Stability}\label{ssec:reduced:pi-stability}
We continue to evaluate the conditions of the wedge local model. We are given pairs of subspaces $(\F,\F^{\perp})$. Condition (\hyperlink{hyp:reduced:Pi}{Pi}), concerning the stability of $\F$ under the action of $\Pi$, translates into the equation
\beqn\label{eq:reduced:PiStable}\Pi\;\F=\F\;\mat{R}\;.\eeqn
Here $\mat{R}$ is a square matrix of size $n$, which we subdivide into four blocks as follows:
\[\mat{R}=\twomat{\mat{S}}{\mat{T}}{\mat{U}}{\mat{V}}\;,\]
with $\mat{S}$ a square matrix of size $s$ and $\mat{V}$ a square matrix of size $r$. With respect to the chosen basis, the operator $\Pi$ is given by the matrix
\[\mat{\Pi}=\twomat{}{\pi_0\;\I_n}{\I_n}{}\;.\]
Then \eqref{eq:reduced:PiStable} comes to
\beqn\label{eq:reduced:idPiStable}\fourtwomat{\mat{0}}{\pi_0\;\I_r}{\pi_0\;\mat{c}}{\pi_0\;\mat{d}}{\I_s}{\mat{0}}{\mat{a}}{\mat{b}} = \fourtwomat{\mat{S}}{\mat{T}}{\mat{a}\;\mat{S}+\mat{b}\;\mat{U}}{\mat{a}\;\mat{T}+\mat{b}\;\mat{V}}{\mat{U}}{\mat{V}}{\mat{c}\;\mat{S}+\mat{d}\;\mat{U}}{\mat{c}\;\mat{T}+\mat{d}\;\mat{V}}\;.\eeqn
Comparison of the matrices yields several identities involving the $a$-, $b$-, $c$-, and $d$-variables. This has to be done carefully since the blocks of the matrices that seem to correspond are of different sizes.

To begin with, we obtain from \eqref{eq:reduced:idPiStable} the following identities concerning the blocks of the matrix $\mat{R}$:
\beqn\label{eq:reduced:idSTUV}\mat{S}=\mat{0}\;,\quad\mat{T}=\pi_0\;{\I_r}^{[s]}\;,\quad\mat{U}=\twovec{\I_s}{\mat{a}^{[r-s]}}\;,\quad\mat{V}=\twovec{\mat{0}}{\mat{b}^{[r-s]}}\;.\eeqn
Thus, the matrix $\mat{R}$ takes the form
\beqn\label{eq:reduced:R}\mat{R}=\threemat{}{\pi_0\;\I_s}{}{\I_s}{}{}{\mat{a}^{[r-s]}}{^{[s]}\mat{b}^{[r-s]}}{_{[r-s]}\mat{b}^{[r-s]}}\;.\eeqn

%...............................................................................

\subsection{Wedge Condition}\label{ssec:reduced:wedgecondition}
Before examining the remaining blocks of \eqref{eq:reduced:idPiStable}, we take a look at the wedge condition (\hyperlink{hyp:reduced:W}{W}).

Since $\Pi|\F$ is given by the matrix $\mat{R}$, all minors of size $r+1$ of
\beqn\label{eq:reduced:R-}\mat{R}-\pi\;\I_n=\threemat{-\pi\;\I_s}{\pi_0\;\I_s}{}{\I_s}{-\pi\;\I_s}{}{\mat{a}^{[r-s]}}{^{[s]}\mat{b}^{[r-s]}}{_{[r-s]}\mat{b}^{[r-s]}-\pi\;\I_{r-s}}\eeqn
have to be zero. Note that the first $s$ rows are multiples of the following $s$ rows. Since any minor of size $r+1$ includes at least one pair of such corresponding rows, all these minors are zero.

All minors of size $s+1$ of
\beqn\label{eq:reduced:R+}\mat{R}+\pi\;\I_n=\threemat{\pi\;\I_s}{\pi_0\;\I_s}{}{\I_s}{\pi\;\I_s}{}{\mat{a}^{[r-s]}}{^{[s]}\mat{b}^{[r-s]}}{_{[r-s]}\mat{b}^{[r-s]}+\pi\;\I_{r-s}}\eeqn
have to be zero as well. First, we consider the minors of size $s+1$
obtained by keeping only the rows with row number in $\{s+1,\dots,2s,2s+i\}$ and the columns with column number in $\{1,\dots,s,s+j\}$. Here $i$ and $j$ denote integers with $1\le i\le r-s$ and $1\le j\le s$. We use Laplace expansion along the last column and calculate
\[\det\twomat{\I_s}{\pi\;{^{(j)}\I_s}}{\mat{a}^{(i)}}{\mat{b}_{i,j}}=(-1)^{2(s+1)}\mat{b}_{i,j}+(-1)^{s+1+j}(-1)^{s-j}\pi\;\mat{a}_{i,j}\;.\]
These minors being zero, we get
\beqn\label{eq:reduced:s_b_r-s}^{[s]}\mat{b}^{[r-s]}=\pi\;\mat{a}^{[r-s]}\;.\eeqn
Next, we consider the minors obtained by keeping the rows $\{s+1,\dots,2s,2s+i\}$ and the columns $\{1,\dots,s,2s+j\}$, with $1\le i,j\le r-s$:
\[\det\twomat{\I_s}{0}{\mat{a}^{(i)}}{(_{[r-s]}\mat{b}^{[r-s]}+\pi\;\I_{r-s})_{i,j}}=(_{[r-s]}\mat{b}^{[r-s]}+\pi\;\I_{r-s})_{i,j}\;.\]
These minors being zero, we obtain
\beqn\label{eq:reduced:r-s_b_r-s} _{[r-s]}\mat{b}^{[r-s]}=-\pi\;\I_{r-s}\;.\eeqn
Finally, all remaining minors of size $s+1$ are automatically zero now.

%...............................................................................

\subsection{Characteristic Polynomial}
The characteristic polynomial of $\Pi|\F$ is given by $\det(T\;\I_n-\mat{R})$, with $R$ as in $\eqref{eq:reduced:R}$. Making use of \eqref{eq:reduced:r-s_b_r-s}, we calculate
\[\det\threemat{T\;\I_s}{-\pi_0\;\I_s}{}{-\I_s}{T\;\I_s}{}{-\mat{a}^{[r-s]}}{-{^{[s]}\mat{b}^{[r-s]}}}{(T+\pi)\;\I_{r-s}}=(T-\pi)^s(T+\pi)^r\in A[T]\;,\]
which is in accordance with (\hyperlink{hyp:reduced:N4}{N4}).

%...............................................................................

\subsection{Pi-Stability (continued)}\label{ssec:reduced:pi-stability2}
We show that the $b$-variables are determined by the $a$-, $c$-, and $d$-variables. For this purpose, we consider the matrix equation $\mat{c}\;\mat{T}+\mat{d}\;\mat{V}=\mat{b}_{[s]}$,
obtained from the lower right blocks of the matrices in \eqref{eq:reduced:idPiStable}.
Using \eqref{eq:reduced:idSTUV}, \eqref{eq:reduced:s_b_r-s}, and \eqref{eq:reduced:r-s_b_r-s}, the first $s$ columns give
\beqn\label{eq:reduced:s_b_s} ^{[s]}\mat{b}_{[s]}=\pi_0\;\mat{c}+{_{[r-s]}\mat{d}}\;^{[s]}\mat{b}^{[r-s]}=\pi_0\;\mat{c}+\pi\;{_{[r-s]}\mat{d}}\;\mat{a}^{[r-s]}\;,\eeqn
and the last $r-s$ columns give
\beqn\label{eq:reduced:r-s_b_s} _{[r-s]}\mat{b}_{[s]}={_{[r-s]}\mat{d}}\;_{[r-s]}\mat{b}^{[r-s]}=-\pi\;{_{[r-s]}\mat{d}}\;.\eeqn
Combining \eqref{eq:reduced:s_b_r-s}--\eqref{eq:reduced:r-s_b_s} yields the following description of the submatrix $\mat{b}$:
\beqn\label{eq:reduced:b}\mat{b}=\twomat{\pi\;\mat{a}^{[r-s]}}{-\pi\;\I_{r-s}}{\pi_0\;\mat{c}+\pi\;{_{[r-s]}\mat{d}}\;\mat{a}^{[r-s]}}{-\pi\;{_{[r-s]}\mat{d}}}\;.\eeqn
Hence, the $\mat{b}$-variables are determined by the other variables.

With \eqref{eq:reduced:idSTUV}, the lower left blocks of the matrices in \eqref{eq:reduced:idPiStable} give the identity 
\beqn\label{eq:reduced:a_s} ^{[s]}\mat{d}=\mat{a}_{[s]}-{_{[r-s]}\mat{d}}\;\mat{a}^{[r-s]}\;,\eeqn
to which we return later. 

The remaining blocks of the matrices in \eqref{eq:reduced:idPiStable} give nothing new.

%...............................................................................

\subsection{Lattice Inclusion Map}\label{ssec:reduced:latticeinclusion}
We can deduce further constraints on the $a$-, $c$-, and $d$-variables from  (\hyperlink{hyp:reduced:N2}{N2}), which demands that the maps induced by the lattice inclusions restrict to the considered subspaces.

With respect to the chosen bases, the map corresponding to $\Lambda_m\subset\Lambda_{m+1}$ is given by the $2n\times2n$-matrix
\[\mat{A}:=\begin{pmatrix}\I_m\\&0&&&\pi_0\\&&\I_m\\&&&\I_m\\&1&&&0\\&&&&&\I_m\end{pmatrix}\;.\]
Since this map is required to restrict to $\F\rightarrow\F^{\perp}$, we have to examine the conditions under which $\mat{A}\;\F$ is perpendicular to $\F$. With $\mat{M}$ as in (\hyperlink{hyp:reduced:N3}{N3}), $\mat{C}:=\F^{\T}\;\mat{A}^{\T}\;\mat{M}\;\F$ has to be the zero matrix of size $n$. We multiply the matrices on the right hand side. Note the form of the matrix
\[\mat{A}^{\T}\;\mat{M}=\begin{pmatrix}&&&&&-\Hh_m\\&1&&&0\\&&&\Hh_m\\&&\Hh_m\\&0&&&-\pi_0\\-\Hh_m\end{pmatrix}\;,\]
which suggests that blockwise multiplying becomes easier when subdividing $\F$ into four groups of columns, with the groups consisting of $s$, $m$, $1$, and $m-s$ columns. This partitioning is shown in Fig.~\ref{fig:reduced:matrix}. The symmetry of $\mat{A}^{\T}\;\mat{M}$ implies the symmetry of $\mat{C}$, and we obtain ten conditions from the blocks of $\mat{C}$:
\begin{enumerate}
\item[(\hypertarget{hyp:reduced:C1}{C1})] $0=-\Hh_s\;\mat{c}+{\mat{a}^{(m-s+1)}}^{\T}\;\mat{a}^{(m-s+1)}-\mat{c}^{\T}\;\Hh_s$\,,
\item[(\hypertarget{hyp:reduced:C2}{C2})] $0=-\Hh_s\;^{[m]}\mat{d}+{\mat{a}^{(m-s+1)}}^{\T}\;^{[m]}\mat{b}^{(m-s+1)}+{\mat{a}_{[m]}}^{\T}\;\Hh_m$\,,
\item[(\hypertarget{hyp:reduced:C3}{C3})] $0=-\Hh_s\;^{(m+1)}\mat{d}+{\mat{a}^{(m-s+1)}}^{\T}\;\mat{b}_{m-s+1,m+1}$\,,
\item[(\hypertarget{hyp:reduced:C4}{C4})] $0=-\Hh_s\;_{[m-s]}\mat{d}-{\mat{a}^{[m-s]}}^{\T}\;\Hh_{m-s}+{\mat{a}^{(m-s+1)}}^{\T}\;_{[m-s]}\mat{b}^{(m-s+1)}$\,,
\item[(\hypertarget{hyp:reduced:C5}{C5})] $0={^{[m]}\mat{b}^{(m-s+1)}}^{\T}\;^{[m]}\mat{b}^{(m-s+1)}+{^{[m]}\mat{b}_{[m]}}^{\T}\;\Hh_m+\Hh_m\;^{[m]}\mat{b}_{[m]}$\,,
\item[(\hypertarget{hyp:reduced:C6}{C6})] $0={^{[m]}\mat{b}^{(m-s+1)}}^{\T}\;\mat{b}_{m-s+1,m+1}+\Hh_m\;^{(m+1)}\mat{b}_{[m]}$\,,
\item[(\hypertarget{hyp:reduced:C7}{C7})] $0=-{^{[m]}\mat{b}^{[m-s]}}^{\T}\;\Hh_{m-s}+{^{[m]}\mat{b}^{(m-s+1)}}^{\T}\;_{[m-s]}\mat{b}^{(m-s+1)}+\Hh_m\;_{[m-s]}\mat{b}_{[m]}$\,,
\item[(\hypertarget{hyp:reduced:C8}{C8})] $0={\mat{b}_{m-s+1,m+1}}^2-\pi_0$\,,
\item[(\hypertarget{hyp:reduced:C9}{C9})] $0=-{^{(m+1)}\mat{b}^{[m-s]}}^{\T}\;\Hh_{m-s}+\mat{b}_{m-s+1,m+1}\;_{[m-s]}\mat{b}^{(m-s+1)}$\,,
\item[(\hypertarget{hyp:reduced:C10}{C10})] $0=-{_{[m-s]}\mat{b}^{[m-s]}}^{\T}\;\Hh_{m-s}+{_{[m-s]}\mat{b}^{(m-s+1)}}^{\T}\;_{[m-s]}\mat{b}^{(m-s+1)}-{\Hh_{m-s}\;_{[m-s]}\mat{b}^{[m-s]}}$\,.
\end{enumerate}

These conditions will now be evaluated, beginning with (\hyperlink{hyp:reduced:C1}{C1}). We collect the \mbox{$c$-variables} on the left hand side and left-multiply with $\Hh_s$ to obtain
\begin{enumerate}
\item[(\hypertarget{hyp:reduced:C1'}{C$1'$})] $\mat{c}+\iota(\mat{c})=\Hh_s\;{\mat{a}^{(m-s+1)}}^{\T}\;\mat{a}^{(m-s+1)}$\,.
\end{enumerate}
Both sides of the last equation are symmetric with respect to reflection at the antidiagonal (that is, invariant under the involution $\iota$). Therefore, it suffices to look at entries on or above the antidiagonal; these are the entries indexed by $(i,j)$ with $1\leq i\leq s$, $1\leq j\leq s+1-i$. Note that only $a$-variables occur on the right hand side of (\hyperlink{hyp:reduced:C1'}{C$1'$}), which we temporarily denote by $\mat{B}$. We get equations of the form
\[\mat{c}_{i,j}+\mat{c}_{s+1-j,s+1-i}=\mat{B}_{i,j}\;.\]
The entries on the antidiagonal give $\mat{c}_{i,s+1-i}=\mat{B}_{i,s+1-i}/2$ (by assumption, the characteristic is $\neq2$); those above the antidiagonal give $\mat{c}_{s+1-j,s+1-i}=\mat{B}_{i,j}-\mat{c}_{i,j}$. Hence, we may keep the elements of the set
\[\{\mat{c}_{i,j};\ 1\leq i<s,\ 1\leq j<s+1-i\}\]
as free variables, determining (together with the $a$-variables) all remaining $\mat{c}_{i,j}$ with $1\leq i\leq s$, $s+1-i\leq j\leq s$. The free $c$-variables are ${s(s-1)}/2$ in number.

Analogously, we rearrange (\hyperlink{hyp:reduced:C2}{C2})--(\hyperlink{hyp:reduced:C4}{C4}) to get
\begin{enumerate}
\item[(\hypertarget{hyp:reduced:C2'}{C$2'$})] $^{[m]}\mat{d}=\iota(\mat{a}_{[m]})+\Hh_s\;{\mat{a}^{(m-s+1)}}^{\T}\;\twovex{\pi\;\mat{a}^{(m-s+1)}}{\mat{0}}$\,,
\item[(\hypertarget{hyp:reduced:C3'}{C$3'$})] $^{(m+1)}\mat{d}=-\pi\;\iota(\mat{a}^{(m-s+1)})$\,,
\item[(\hypertarget{hyp:reduced:C4'}{C$4'$})] $_{[m-s]}\mat{d}=-\iota(\mat{a}^{[m-s]})$\,.
\end{enumerate}
Consequently, the $d$-variables are determined by the $a$-variables.

We split $_{[r-s]}\mat{d}\;\mat{a}^{[r-s]}$ into three terms,
\beqn\label{eq:reduced:r-sd}
\begin{aligned}
_{[r-s]}\mat{d}\;\mat{a}^{[r-s]}=&{_{[m-s]}{^{[m]}\mat{d}}}\;\mat{a}^{[m-s]}+{^{(m+1)}\mat{d}}\;\mat{a}^{(m-s+1)}\\
&+{_{[m-s]}\mat{d}}\;{\mat{a}_{[m]}}^{[m-s]}\;,
\end{aligned}
\eeqn
with which we substitute the corresponding term in equation \eqref{eq:reduced:a_s} (this equation has not been considered yet). We then use (\hyperlink{hyp:reduced:C2'}{C$2'$})--(\hyperlink{hyp:reduced:C4'}{C$4'$}) to replace the $d$-variables and obtain after rearranging
\beqn\label{eq:reduced:maineq} \mat{a}_{[s]}-\iota(\mat{a}_{[s]})=\iota({\mat{a}_{[m]}}^{[m-s]})\;\mat{a}^{[m-s]}-\iota(\mat{a}^{[m-s]})\;{\mat{a}_{[m]}}^{[m-s]}\;.\eeqn
All elements $\mat{a}_{i,j}$ on the right hand side have index $(i,j)$ in the set
\[\mathcal{I}:=\{(i,j);\ 1\leq i\leq r-s,\ 1\leq j\leq s\}\;,\]
whereas all elements on the left hand side have index $(i,j)$ in the complement
\[\mathcal{Q}:=\{(i,j);\ r-s+1\leq i\leq r,\ 1\leq j\leq s\}\;.\]
Both sides of \eqref{eq:reduced:maineq} are antisymmetric with respect to reflection at the antidiagonal. We argue as above (in the case of the $c$-variables) and keep the elements of the set
\[\{\mat{a}_{i,j};\ (i,j)\in\mathcal{I}\}\cup\{\mat{a}_{i,j};\ r-s+1\leq i\leq r,\ 1\leq j\leq r+1-i\}\]
as free variables. They determine the remaining $\mat{a}_{i,j}$ with $r-s+1<i\leq r$, $r+1-i<j\leq s$. Hence, there are $s(r-s)+{s(s+1)}/2$ free $a$-variables.

Since the $c$-variables are independent of the $a$-variables, we conclude that the pairs $(\F,\G)$ satisfying the conditions so far describe an affine space of dimension
\[\frac{s(s-1)}2+s(r-s)+\frac{s(s+1)}2=rs\;,\]
which is in accordance with the assertion of the proposition.

%...............................................................................

\subsection{Remaining Conditions}
If we could show that all remaining conditions of the wedge local model are fulfilled by now, the proposition would follow.

\begin{rem}\label{rem:reduced:remainingconditions}
In the proof of Thm.~\ref{thm:reduced:reduced} we give a dimension argument, which implies that the remaining conditions are automatically satisfied; see Rem.~\ref{rem:reduced:conditionssufficed}.
\end{rem}

Despite the remark, we indicate how this can be verified by direct calculations.

We start with equations (\hyperlink{hyp:reduced:C5}{C5})--(\hyperlink{hyp:reduced:C10}{C10}), which involve only $b$-variables. They are automatically satisfied, as can be shown by using the description \eqref{eq:reduced:b} of $\mat{b}$ together with (\hyperlink{hyp:reduced:C1'}{C$1'$})--(\hyperlink{hyp:reduced:C4'}{C$4'$}). To give an example, we check that (\hyperlink{hyp:reduced:C5}{C5}) holds true. Rearranging and multiplying with $\Hh_m$ from the left yields
\[^{[m]}\mat{b}_{[m]}+\iota(^{[m]}\mat{b}_{[m]})=-\Hh_m\;{^{[m]}\mat{b}^{(m-s+1)}}^{\T}\;^{[m]}\mat{b}^{(m-s+1)}\;.\]
With \eqref{eq:reduced:b}, we see that the left hand side is the $\pi$-fold of the matrix
\[\twomat{{\mat{a}^{[r-s]}}_{[m-s]}-\iota(^{[m-s]}{_{[r-s]}\mat{d}})}{\mat{0}}{\pi\;(\mat{c}+\iota(\mat{c}))+{_{[r-s]}\mat{d}}\;\mat{a}^{[r-s]}+\iota(_{[r-s]}\mat{d}\;\mat{a}^{[r-s]})}{\iota({\mat{a}^{[r-s]}}_{[m-s]})-{^{[m-s]}{_{[r-s]}\mat{d}}}}\]
and the right hand side the $\pi$-fold of the matrix
\[\twomat{\mat{0}}{\mat{0}}{-\pi\;\Hh_s\;{\mat{a}^{(m-s+1)}}^{\T}\;\mat{a}^{(m-s+1)}}{\mat{0}}\;.\]
For the moment, we refer to these matrices by $B_{\lhs}$ and $B_{\rhs}$, respectively. Note that ${\mat{a}^{[r-s]}}_{[m-s]}$ denotes the same matrix block as ${\mat{a}_{[m]}}^{[m-s]}$; in the same manner,  $^{[m-s]}{_{[r-s]}\mat{d}}={_{[m-s]}{^{[m]}\mat{d}}}$. Then (\hyperlink{hyp:reduced:C2'}{C$2'$}) implies that the upper left block and the lower right block of $B_{\lhs}$ are zero. The lower left blocks of $B_{\lhs}$ and $B_{\rhs}$ also coincide. This follows from (\hyperlink{hyp:reduced:C1'}{C$1'$}) and the calculation
\[_{[r-s]}\mat{d}\;\mat{a}^{[r-s]}+\iota(_{[r-s]}\mat{d}\;\mat{a}^{[r-s]})=-2\pi\;\Hh_s\;{\mat{a}^{(m-s+1)}}^{\T}\;\mat{a}^{(m-s+1)}\;.\]
For the last equation, we have used \eqref{eq:reduced:r-sd} and (\hyperlink{hyp:reduced:C2'}{C$2'$})--(\hyperlink{hyp:reduced:C4'}{C$4'$}).

Next, we consider the $\Pi$-stability of $\F^{\perp}$. The adjoint of $\Pi$ (with respect to the perfect pairing \eqref{eq:reduced:perfpair}) is given by $\Pi^{\ast}=-\Pi$ since $\mat{\Pi}^{\T}\;\mat{M}=-\mat{M}\;\mat{\Pi}$. With $x\in\F^{\perp}$, we then have $\Pi\;x\in\F^{\perp}$ as well. Indeed, we calculate $(\Pi\;x,y)=(x,-\Pi\;y)=0$ for all $y\in\F$ because $\Pi\;y\in\F$.

Analogously to \eqref{eq:reduced:R}, the matrix
\beqn\label{eq:reduced:R'}\mat{R'}:=\threemat{}{\pi_0\;\I_s}{}{\I_s}{}{}{\mat{\tilde{a}}^{[r-s]}}{^{[s]}\mat{\tilde{b}}^{[r-s]}}{_{[r-s]}\mat{\tilde{b}}^{[r-s]}}\eeqn
describes the action of $\Pi$ on $\F^{\perp}$. Here the \~{}-signs denote the corresponding entries of the orthogonal complement of $\F$, given in Lem.~\ref{lem:reduced:orthogonalcomplement}. Together with the description \eqref{eq:reduced:b} of $\mat{b}$, it follows that we have $_{[r-s]}\mat{\tilde{b}}^{[r-s]}=-\pi\;\I_{r-s}$.
Hence, the characteristic polynomial of $\Pi|\F^{\perp}$ equals
\[\det\threemat{T\;\I_s}{-\pi_0\;\I_s}{}{-\I_s}{T\;\I_s}{}{-\mat{\tilde{a}}^{[r-s]}}{-{^{[s]}\mat{\tilde{b}}^{[r-s]}}}{(T+\pi)\;\I_{r-s}}=(T-\pi)^s(T+\pi)^r\in A[T]\;,\]
and therefore (\hyperlink{hyp:reduced:N4}{N4}) holds true.

Condition (\hyperlink{hyp:reduced:W}{W}) for $\F^{\perp}$ is also satisfied, as can be seen in complete analogy to the calculations in sect.~\ref{ssec:reduced:wedgecondition}.

Less obvious is the last requirement to be checked, namely, that the map induced by the lattice inclusion $\Lambda_{m+1}\subset\pi^{-1}\Lambda_m$ restricts to a map $\F^{\perp}\to\pi^{-1}\F$. The former map is given by the $2n\times2n$-matrix
\[\mat{A'}:=\begin{pmatrix}
&&&\pi_0\;\I_m\\&1&&&0\\&&&&&\pi_0\;\I_m\\\I_m\\&0&&&1\\&&\I_m
\end{pmatrix}\;.\]
To show that the image $\mat{A'}\;\F^{\perp}$ lies in the subspace $\pi^{-1}\F$ (which with respect to the basis of $\pi^{-1}\Lambda_m$ is given by the column span of the matrix $\F$), we give a square matrix $\mat{Q}$ of size $n$ that satisfies $\mat{A'}\;\F^{\perp}=\F\;\mat{Q}$:
\[\mat{Q}:=\begin{pmatrix}
&\pi_0\;\I_s\\\I_s\\-\iota({_{[m-s]}\mat{d}})&-\pi\;\iota({_{[m-s]}\mat{d}})&-\pi\;\I_{m-s}\\&&&1\\\iota(_{[m-s]}{^{[m]}\mat{d}})&\pi\;\iota(_{[m-s]}{^{[m]}\mat{d}})&&&-\pi\;\I_{m-s}
\end{pmatrix}\;.\]
That $\mat{Q}$ satisfies the above equation can be seen by straightforward---but tedious---matrix block-multiplications.

%...............................................................................

\subsection{Conclusion}
This completes the proof of the proposition: we have shown that the wedge local model contains an open subset that is isomorphic to affine space of dimension $rs$. Moreover, the open subset is a neighborhood of the special point $(\F_1,\G_1)$.
\end{proof}

Now the assertions of the theorem can be deduced.

\begin{proof}[Proof of Thm.~\ref{thm:reduced:reduced}]
We want to see that the local model contains an open subset that is isomorphic to affine space of dimension $rs$. For this, we show that the open subset constructed above is actually lying in the local model.

We consider the closed subscheme of the product of Grassmannians that consists of pairs satisfying the conditions of the wedge local model treated up to Rem.~\ref{rem:reduced:remainingconditions}:
\[Y:=\{(\F,\G)\in\Grass_{n,2n}\times\Grass_{n,2n};\ \text{conditions up to Rem.~\ref{rem:reduced:remainingconditions}}\}\;.\]
The standard open subset $\Grass_{n,2n}^J\times\Grass_{n,2n}^J$ of the product of Grassmannians is abbreviated to $U$. We have the following inclusions of closed subschemes:
\beqn\label{eq:reduced:MIloc}M_I^{\loc}\cap U\subset M_I^{\wedge}\cap U\subset Y\cap U\;.\eeqn
By Lem.~\ref{lem:localmodel:dimensiongenericfiber}, the generic fiber of the local model is irreducible and of dimension $rs$. As its closure (in the naive local model), the local model is irreducible as well. The structure morphism to $\OO_E$ is dominant, and since it is projective, the special fiber of the local model is nonempty. It follows from Chevalley's theorem \cite{EGA:IV:3}*{Thm.~13.1.3} that all irreducible components of the special fiber have dimension at least $rs$. By flatness, the special fiber of the local model is in fact equidimensional of dimension $rs$ \cite{L:2002}*{Prop.~4.4.16}.

We have seen in the proof of Prop.~\ref{prp:reduced:affine} that the $\OO_E$-scheme $Y\cap U$ is isomorphic to affine space of dimension $rs$; in particular, its special fiber and its generic fiber are both irreducible of dimension $rs$. Hence, on the level of reduced schemes, the inclusions in \eqref{eq:reduced:MIloc} are equalities. Since $Y\cap U$ is reduced, we obtain
\[Y\cap U=(Y\cap U)_{\red}=(M_I^{\loc}\cap U)_{\red}\subset M_I^{\loc}\cap U\;,\]
where the subscript ``red'' denotes the reduced structure. Together with \eqref{eq:reduced:MIloc}, this implies that the affine space $Y\cap U$ coincides with the open subset $M_I^{\loc}\cap U$ of the local model.
\end{proof}

\begin{rem}\label{rem:reduced:conditionssufficed}
The conditions verified after Rem.~\ref{rem:reduced:remainingconditions} are automatically satisfied: From the above, we obtain
the inclusion $Y\cap U\subset M_I^{\wedge}\cap U$, and the converse inclusion trivially holds true.
\end{rem}

%-------------------------------------------------------------------------------

\section{Irreducibility of the Special Fiber}
In this section, we will establish the second result required in the proof of the main theorem.

\begin{thm}[second step]\label{thm:irreducibility:irreducible}
Let $n=2m+1$ be odd and $I=\{m\}$. Then the special fiber of the local model $M_I^{\loc}$ is irreducible.
\end{thm}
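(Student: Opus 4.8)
The plan is to locate a single point of the special fiber of $M_I^{\loc}$ that lies on every irreducible component, and then to analyze a standard affine open neighborhood of that point. As in the proof of Prop.~\ref{prp:reduced:affine}, one knows from \cite{PR:2007}*{sect.~3.c} that the geometric special fiber embeds equivariantly into a partial affine flag variety, so that its irreducible components are indexed by the extreme elements of the $\mu$-admissible set. Following the strategy indicated in the introduction, one expects there to be a distinguished ``worst point'' $(\F_0,\G_0)$ --- the analogue, for the index set $I=\{m\}$, of the base point corresponding to the most singular Schubert stratum --- which, being fixed (up to the parahoric action) by enough of the group, lies in the closure of every stratum and hence in every irreducible component. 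So first I would pin down this worst point explicitly in coordinates, exactly as $(\F_1,\G_1)$ was described in Rem.~\ref{rem:reduced:bestpoint}, checking that it satisfies (\hyperlink{hyp:reduced:N1}{N1})--(\hyperlink{hyp:reduced:Pi}{Pi}) and hence lies in the special fiber of $M_I^{\wedge}$, and that (by the cited results on the $\mu$-admissible set) it lies in the special fiber of $M_I^{\loc}$.

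The second step is to choose the standard open subset $\Grass_{n,2n}^{J'}\times\Grass_{n,2n}^{J'}$ of the product of Grassmannians adapted to the worst point, write a general element $\F$ in the resulting row-reduced matrix form analogous to \eqref{eq:reduced:F}, use (\hyperlink{hyp:reduced:N3}{N3}) to eliminate $\G=\F^{\perp}$ via the analogue of Lem.~\ref{lem:reduced:orthogonalcomplement}, and then translate conditions (\hyperlink{hyp:reduced:Pi}{Pi}), (\hyperlink{hyp:reduced:N2}{N2}), (\hyperlink{hyp:reduced:N4}{N4}), and (\hyperlink{hyp:reduced:W}{W}) into matrix identities among the remaining free blocks. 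Unlike the best-point chart, this will not collapse to an affine space; instead one obtains a more complicated matrix scheme $Z$. The idea --- borrowed from the analysis of the other special-parahoric case in \cite{PR:2007}*{sect.~5.e} --- is to exhibit a large group acting on $Z$; here it is (a symplectic group) $\Sp$, and the wedge and characteristic-polynomial conditions will present the relevant block as something like a nilpotent element in the symmetric pair $(\mathfrak{gl}_n,\mathfrak{sp}_n)$. One then builds an $\Sp$-equivariant projection $Z\to Z'$ onto a simpler base (e.g.\ forgetting all but one block), so that the problem reduces to understanding the fibers of this projection together with the irreducibility of the base.

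The third step is the fiberwise analysis: the fibers should be describable in terms of nilpotent orbits in $(\mathfrak{gl}_n,\mathfrak{sp}_n)$, where one invokes Ohta \cite{O:1986}*{Prop.~1 and Thm.~1} and Kostant--Rallis \cite{KR:1971}*{Prop.~5 and its proof} to get their irreducibility and dimension. Combining with the base, one shows that the special fiber $Z$ of $M_I^{\wedge}\cap U$ contains a unique irreducible component of the ``correct'' dimension $rs$ --- the dimension forced on $M_I^{\loc}$ by flatness and Lem.~\ref{lem:localmodel:dimensiongenericfiber}, via Chevalley's theorem and equidimensionality exactly as in the proof of Thm.~\ref{thm:reduced:reduced} --- with all other components of strictly smaller dimension. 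Since $M_I^{\loc}\cap U\subset M_I^{\wedge}\cap U$ is closed, purely $rs$-dimensional, and contains the worst point, it must equal (the reduced structure on) that distinguished component; as the worst point lies on every component of the special fiber of $M_I^{\loc}$, the special fiber is covered by a single such neighborhood and is therefore irreducible. The main obstacle I anticipate is precisely the middle step: correctly identifying the symmetric-pair structure hidden in the matrix equations at the worst point, constructing the equivariant projection, and controlling the dimensions of the auxiliary (non-$rs$-dimensional) components well enough to rule them out --- the nilpotent-orbit input is available off the shelf, but matching the local-model equations to it is where the real work lies.
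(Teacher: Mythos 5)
Your proposal follows essentially the same route as the paper's proof: identify the unique closed orbit $(\F_0,\G_0)=(\Pi\Lambda_m,\Pi\Lambda_{m+1})$ as the ``worst point'' lying on every component, translate the wedge-local-model conditions in a standard chart around it into a matrix scheme $N$ of pairs $(\mat{X_1},\mat{X_3})$, exploit the $\Sp_{2m}$-action together with the equivariant projection $\pr_{\mat{X_3}}:N\to\A^{2m}$, and describe both the zero fiber and the fiber over a nonzero vector in terms of closures of nilpotent $\Sp$-orbits via Ohta and Kostant--Rallis, finishing with the $rs$-dimension count from flatness and Chevalley. The details you defer (the exact block structure, the further fibration $\pr_{\mat{Y_1}}$ of the nonzero fiber over a smaller symmetric-pair nilpotent cone) are precisely what the paper carries out, but the architecture and all the key inputs match.
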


\begin{proof}
The theorem is a consequence of an apparently weaker result, which is given in Prop.~\ref{prp:irreducibility:irreducibleneighborhood} below. Lemma~\ref{lem:irreducibility:worstpoint} ensures that this is actually sufficient.
\end{proof}

%...............................................................................

\subsection{Worst Point}\label{ssec:irreducibility:worstpoint}
Recall from sect.~\ref{ssec:reduced:bestpoint} that the special fiber of the local model is the union of Schubert varieties, enumerated by certain elements of the corresponding affine Weyl group. As in sect.~5.e of \cite{PR:2007}, we can see that there is a unique closed orbit, which has to be contained in the closed subset $\AAA^I(\mu)$. From sect.~2.d.2 of loc.cit., it follows that in the current situation the closed orbit consists of the single point $(\F_0,\G_0)$, given by the subspaces
\begin{gather*}
\F_0:=\Pi\;\Lambda_m\subset\Lambda_m\;,\\
\G_0:=\Pi\;\Lambda_{m+1}\subset\Lambda_{m+1}\;.
\end{gather*}
This point is, in some sense, at the opposite extreme of the previously considered best point $(\F_1,\G_1)$: it is contained in all irreducible components of the special fiber of the local model. Hence, we have the following result about the ``worst point'' (the naming is due to the occurrence of the ``worst singularities'' at this point):

\begin{lem}\label{lem:irreducibility:worstpoint}
To prove the irreducibility of the special fiber of the local model, it is sufficient to show that the worst point has an open neighborhood (in the special fiber of the local model) that is irreducible.\qed
\end{lem}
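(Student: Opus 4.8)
The plan is to exploit the fact, recalled at the start of section~\ref{ssec:irreducibility:worstpoint}, that the special fiber of the local model is a union of Schubert varieties in the partial affine flag variety, equivariant for the action of the parahoric $P_I$, and that the worst point $(\F_0,\G_0)$ is precisely the unique closed $P_I$-orbit, hence lies in every such Schubert variety. First I would set $X:=(M_I^{\loc})_{\red,\bar{k}}$ for the (reduced) geometric special fiber, and note that $X=\bigcup_\alpha X_\alpha$ is a finite union of irreducible closed subsets $X_\alpha$ (its irreducible components), each stable under $P_I$ since $P_I$ is connected and each $X_\alpha$ is $P_I$-stable (the $P_I$-action permutes components, but connectedness of $P_I$ forbids nontrivial permutations). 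The key geometric input is that the worst point $(\F_0,\G_0)$, being the unique closed orbit, is contained in the closure of every $P_I$-orbit in $X$, and in particular lies in each $X_\alpha$.

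Next I would invoke the hypothesis: we are given an open neighborhood $\Omega$ of $(\F_0,\G_0)$ in $X$ that is irreducible. Suppose for contradiction that $X$ has at least two distinct irreducible components $X_1\neq X_2$. By the previous paragraph both contain $(\F_0,\G_0)$, hence both meet $\Omega$. Then $\Omega\cap X_1$ and $\Omega\cap X_2$ are nonempty open subsets of the irreducible space $\Omega$, so each is dense in $\Omega$; but $\Omega\cap X_i$ is closed in $\Omega$ (as $X_i$ is closed in $X$), hence $\Omega\cap X_i=\Omega$ for $i=1,2$, i.e. $\Omega\subset X_1$ and $\Omega\subset X_2$. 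Taking closures in $X$, $\bar{\Omega}$ is an irreducible closed subset contained in both $X_1$ and $X_2$; since the $X_i$ are among the (maximal) irreducible components and $\Omega$ is a \emph{nonempty open} subset of $X$, its closure $\bar\Omega$ has dimension equal to $\dim X$ at the point $(\F_0,\G_0)$, forcing $\bar\Omega$ to be a full component and hence $X_1=\bar\Omega=X_2$, a contradiction. Therefore $X$ has a single irreducible component, i.e. $X$ is irreducible; and since irreducibility is insensitive to nilpotents and to passing from $\bar{k}$ down to $k$ (a scheme of finite type over $k$ is irreducible iff its base change to $\bar k$ is equidimensional with a single geometric component — here equidimensionality is already known from flatness, cf.\ the proof of Thm.~\ref{thm:reduced:reduced}), the special fiber of $M_I^{\loc}$ itself is irreducible.

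The only genuine subtlety, and the point I would take care over, is the passage ``nonempty open $\Rightarrow$ closure is a full component.'' This uses that $X$, being the special fiber of the flat projective $\OO_E$-scheme $M_I^{\loc}$, is equidimensional of dimension $rs$ (established already via \cite{L:2002}*{Prop.~4.4.16} in the proof of Thm.~\ref{thm:reduced:reduced}); an open subset of an equidimensional space of pure dimension $d$ has closure of dimension $d$ meeting a component in a set open in that component, hence equal to it. I expect this bookkeeping — together with the reduction from the geometric special fiber to the special fiber over $k$ — to be the main thing requiring care; the rest is the soft topology of irreducible spaces. Alternatively, and perhaps more cleanly, one can phrase the whole argument as: an irreducible open subset that meets every irreducible component of a space forces that space to be irreducible, and $\Omega$ meets every component precisely because each component contains the worst point. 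This is exactly the content of Lem.~\ref{lem:irreducibility:worstpoint}, whose proof is thereby complete.
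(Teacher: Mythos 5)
The paper states this lemma with only a \qed{} and no explicit proof, so the task is really to check your reconstruction. Your identification of the key geometric input is correct: the worst point is the unique closed orbit, hence lies in the closure of every orbit, hence in every irreducible component (each component being a closed union of orbits, since the connected group permutes components trivially). The auxiliary remark on descending irreducibility from $\bar k$ to $k$ and its insensitivity to nilpotents is also fine.

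However, the main body of your topological argument contains a genuine error: you assert that $\Omega\cap X_i$ is ``a nonempty open subset of the irreducible space $\Omega$,'' and then that it is simultaneously closed, to conclude $\Omega\cap X_i=\Omega$ for \emph{both} $i=1,2$. But an irreducible component $X_i$ of $X$ is closed, not open, in $X$; thus $\Omega\cap X_i$ is closed in $\Omega$ but \emph{not} open in $\Omega$ in general (it is open in $X_i$, which is a different topology). From ``$\Omega$ irreducible, covered by the closed sets $\Omega\cap X_\alpha$'' one only gets $\Omega\subset X_\beta$ for a \emph{single} $\beta$, not for every $\alpha$, so the ensuing ``$\bar\Omega\subset X_1\cap X_2$'' step does not follow. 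The repair is exactly what you sketch in your closing ``alternatively'' paragraph, and it is worth making explicit because it also shows the dimension count and equidimensionality are unnecessary here: from $\Omega\subset X_\beta$, take any component $X_\alpha$; since $(\F_0,\G_0)\in X_\alpha\cap\Omega$, the set $\Omega\cap X_\alpha$ is a nonempty subset of $X_\alpha$ that is open in $X_\alpha$ (because $\Omega$ is open in $X$), hence dense in the irreducible $X_\alpha$; but $\Omega\cap X_\alpha\subset\Omega\subset X_\beta$, so passing to closures gives $X_\alpha\subset X_\beta$ and hence $X_\alpha=X_\beta$ by maximality. This proves that $X_\beta$ is the only component, with no appeal to flatness or equidimensionality. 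Your version as written would not compile into a correct proof without this fix; your ``cleaner'' variant is the one to keep.
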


By the lemma, the next proposition is enough to complete the proof of Thm.~\ref{thm:irreducibility:irreducible}.

\begin{prp}\label{prp:irreducibility:irreducibleneighborhood}
Let $n=2m+1$ be odd and $I=\{m\}$. Then, in the special fiber of the local model, the point $(\F_0,\G_0)$ has an open neighborhood that is irreducible.
\end{prp}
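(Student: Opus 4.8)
The plan is to study an explicit affine chart around the worst point $(\F_0,\G_0)$ and analyze the resulting matrix scheme directly. First I would choose the subset $J\subset\{1,\dots,2n\}$ adapted to the lattice $\F_0=\Pi\;\Lambda_m$, so that in the corresponding standard chart $\Grass_{n,2n}^J$ the point $(\F_0,\G_0)$ is the origin $a=b=c=d=0$; as in sect.~\ref{ssec:reduced:bestpoint}, the orthogonal complement $\G$ is determined by $\F$ via Lem.~\ref{lem:reduced:orthogonalcomplement}, so it suffices to work with a single matrix $\F$ of the shape \eqref{eq:reduced:F}. Then I would impose the conditions of the wedge local model one by one exactly as in the proof of Prop.~\ref{prp:reduced:affine}: the $\Pi$-stability condition (\hyperlink{hyp:reduced:Pi}{Pi}) gives an equation $\Pi\;\F=\F\;\mat{R}$ expressing some block of $\mat{R}$ and some of the variables in terms of the others; the wedge conditions (\hyperlink{hyp:reduced:W}{W}) translate (via Laplace expansion of the relevant minors) into polynomial relations; condition (\hyperlink{hyp:reduced:N4}{N4}) on the characteristic polynomial is then seen to be automatic; and finally the lattice-inclusion condition (\hyperlink{hyp:reduced:N2}{N2}) yields a system of quadratic matrix equations analogous to (\hyperlink{hyp:reduced:C1}{C1})--(\hyperlink{hyp:reduced:C10}{C10}). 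The upshot should be that, in this chart, the wedge local model is cut out inside affine space by a concrete matrix scheme on which the symplectic group $\Sp$ acts.

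The key new point, relative to the first-step proof, is that near the worst point the matrix equations are no longer linear, so the chart is not an affine space; instead I expect to land on a variety closely related to the fibers of a moment-type map. Following sect.~5.e of \cite{PR:2007}, I would exploit the $\Sp$-action: there is an equivariant projection morphism onto a smaller matrix space (projecting away, say, the $c$-variables or an appropriate block), and I would reduce the irreducibility question to the irreducibility of the fibers of this projection. Over the generic (or over a distinguished) fiber these are described via the theory of nilpotent orbits in the symmetric pair $(\mathfrak{gl}_n,\mathfrak{sp}_n)$: by Ohta \cite{O:1986}*{Prop.~1 and Thm.~1} and Kostant--Rallis \cite{KR:1971}*{Prop.~5 and its proof}, the relevant nilpotent variety is irreducible of the expected dimension. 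I would then assemble these fiberwise statements: if the base of the projection is irreducible and all fibers over a dense open are irreducible of constant dimension, the total space has a unique component of the expected dimension $rs$.

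Once I know that the chart $M_I^{\wedge}\cap U$ contains a single irreducible component $Z$ of dimension $rs$, with every other component of strictly smaller dimension, I would transfer this to the local model. Since $M_I^{\loc}\subset M_I^{\wedge}$ and the special fiber of $M_I^{\loc}$ is equidimensional of dimension $rs$ (by flatness, Lem.~\ref{lem:localmodel:dimensiongenericfiber}, and Chevalley's theorem, exactly as in the proof of Thm.~\ref{thm:reduced:reduced}), the intersection $M_I^{\loc}\cap U$ must coincide, as a topological space, with $Z$; the lower-dimensional components of $M_I^{\wedge}\cap U$ cannot meet $M_I^{\loc}$. Because $Z$ is irreducible, this exhibits an irreducible open neighborhood of $(\F_0,\G_0)$ in the special fiber of the local model, proving the proposition.

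I expect the main obstacle to be the passage from the raw matrix equations (\hyperlink{hyp:reduced:C1}{C1})--(\hyperlink{hyp:reduced:C10}{C10}) in the worst-point chart to a clean identification of the relevant fiber with a nilpotent variety in $(\mathfrak{gl}_n,\mathfrak{sp}_n)$, and the bookkeeping needed to check that the dimension of the fibers is constant on a dense open and adds up correctly to $rs$; the geometric input from \cite{O:1986} and \cite{KR:1971} is then what makes the irreducibility and dimension count go through.
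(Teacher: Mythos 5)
Your proposal takes essentially the same route as the paper: an affine chart at the worst point, evaluation of the wedge-local-model conditions into a matrix scheme with an $\Sp$-action, an equivariant projection whose fibers are analyzed via Ohta and Kostant--Rallis, and a dimension/flatness argument to pass from $M_I^{\wedge}\cap U$ to $M_I^{\loc}\cap U$. The one place your sketch is a little loose is the "assemble fiberwise" step: the fiber over $0$ is genuinely different from the fibers over nonzero vectors and must be treated separately (it is irreducible of strictly smaller dimension), while the nonzero fibers may each have several components; the paper isolates a single maximal-dimensional component in the nonzero fiber and then sweeps it out by the $\Sp$-action, rather than invoking a generic-fiber-dimension principle on a dense open.
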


\begin{proof}
We start with the description of an open neighborhood of the point $(\F_0,\G_0)$ in the special fiber of the wedge local model. In a later subsection, we consider the intersection with the local model and deduce the statement of the proposition.

As in the previous section, we use matrices to describe an open subset. We consider points of the special fiber; therefore, unless explicitly mentioned otherwise, all schemes in this section are over the residue field $k$. Since we want to prove an irreducibility result, it is enough to consider the reduced scheme structures; therefore, unless otherwise specified, all schemes are equipped with the reduced structure. Moreover, the schemes involved in this section are all of finite type over $k$. Hence, from the functorial point of view, it is enough to consider only geometric points, that is, $\bar{k}$-valued points, with $\bar{k}$ denoting a fixed algebraic closure of $k$ \cite{M:1999}*{\S6}.

%...............................................................................

\subsection{Conditions of the Wedge Local Model}
To simplify the upcoming calculations, we use rearranged bases of $\Lambda_m$ and $\Lambda_{m+1}$:
\begin{gather*}
\Lambda_m=\spn_{\OO_F}\{e_{m+2},\dots,e_n,\pi^{-1}e_1,\dots,\pi^{-1}e_m,e_{m+1}\}\;,\\
\Lambda_{m+1}=\spn_{\OO_F}\{e_{m+2},\dots,e_n,\pi^{-1}e_1,\dots,\pi^{-1}e_m,\pi^{-1}e_{m+1}\}\;.
\end{gather*}
As usual, we get corresponding $\OO_{F_0}$-bases by adding the $\pi$-multiples of the respective basis vectors above (in the prescribed order, cf.\ sect.\ \ref{ssec:reduced:wedgelocalmodel}).

Recall that the $\bar{k}$-valued points of the wedge local model are given by pairs of $\OO_F\ot\bar{k}$-subspaces $(\F,\G)$, with $\F\subset\Lambda_m\ot\bar{k}$ and $\G\subset\Lambda_{m+1}\ot\bar{k}$, subject to conditions (\hyperlink{hyp:reduced:N1}{N1})--(\hyperlink{hyp:reduced:N4}{N4}), (\hyperlink{hyp:reduced:W}{W}), and (\hyperlink{hyp:reduced:Pi}{Pi}). In particular, $\G$ is determined by $\F$ as its orthogonal complement, and it suffices to consider $\bar{k}$-valued points $\F$ of some standard open subset $\Grass_{n,2n}^J$ of the Grassmannian. In order for the corresponding open subset of the product of Grassmannians to contain the special point $(\F_0,\G_0)$, the index set $J$ has to correspond to the first $n$ elements of the above chosen $\OO_{\F_0}$-basis of $\Lambda_m$. Then the elements of $\Grass_{n,2n}^J(\bar{k})$ are represented by $2n\times n$-matrices
\beqn\label{eq:irreducibility:2nxn}\F=\twovec{\mat{X}}{\I_n}\;,\eeqn
with a square matrix $\mat{X}$ of size $n$ having entries in $\bar{k}$. With respect to the upcoming calculations, we subdivide $\mat{X}$ into four smaller blocks,
\[\mat{X}=\twomat{\mat{X_1}}{\mat{X_2}}{\mat{X_3}}{X_4}\;,\]
where $\mat{X_1}$ is a square matrix of size $n-1$ and $X_4$ a scalar (that is, a square matrix of size $1$).

We evaluate the conditions of the wedge local model. By construction, (\hyperlink{hyp:reduced:N1}{N1}) and (\hyperlink{hyp:reduced:N3}{N3}) are satisfied. The remaining conditions translate into constraints on the matrix $\mat{X}$.

%...............................................................................

\subsection{Lattice Inclusion Map}
Note that $\pi$ is zero in $\bar{k}$. The map induced by the inclusion $\Lambda_m\subset\Lambda_{m+1}$ is described by the matrix
\[\mat{\bar{A}}:=\twomat{\I_n-\mat{K}}{}{\mat{K}}{\I_n-\mat{K}}\;,\]
where the $n\times n$-matrix $\mat{K}$ is defined as
\[\mat{K}:=\twomat{\mat{0}_{2m}}{}{}{1}\;,\]
with $\mat{0}_{2m}$ denoting the zero matrix of size $2m$. We introduce the square matrix $\Jp_{2m}$ of size $2m+1$, following the notation of $\J_{2m}$:
\[\Jp_{2m}:=\twomat{\J_{2m}}{}{}{0}\;.\]
The natural perfect pairing \eqref{eq:reduced:perfpair} is then represented by the matrix
\[\mat{M'}:=\twomat{}{\Jp_{2m}-\mat{K}}{-\Jp_{2m}+\mat{K}}{}\;.\]
Condition (\hyperlink{hyp:reduced:N2}{N2}) requires that the map $\bar{A}$ restricts to a map $\F\to\F^{\perp}$. The image of $\F$ lies in the orthogonal complement of $\F$ if $\F^{\T}\;\mat{\bar{A}}^{\T}\;\mat{M'}\;\F=0$. We multiply these matrices; with
\[\mat{\bar{A}}^{\T}\;\mat{M'}=\twomat{\mat{K}}{\Jp_{2m}}{-\Jp_{2m}}{}\;,\]
we get the condition
$\mat{X}^{\T}\;\mat{K}\;\mat{X}+(\mat{X}^{\T}\;\Jp_{2m}-\Jp_{2m}\;\mat{X})=0$,
which in block form is given by
\beqn\label{eq:irreducibility:X1_symmetryMatrix}\twomat{{\mat{X_3}}^{\T}\;\mat{X_3}}{X_4\;{\mat{X_3}}^{\T}}{X_4\;\mat{X_3}}{{X_4}^2}+\twomat{{\mat{X_1}}^{\T}\;\J_{2m}-\J_{2m}\;\mat{X_1}}{-\J_{2m}\;\mat{X_2}}{{\mat{X_2}}^{\T}\;\J_{2m}}{0}=0\;.\eeqn
From the lower right blocks, we deduce ${X_4}^2=0$. Since $\bar{k}$ is a field, it follows that $X_4=0$. Then the upper right blocks give $\mat{X_2}=0$, and from the upper left blocks, the identity
${\mat{X_3}}^{\T}\;\mat{X_3}=\J_{2m}\;\mat{X_1}-{\mat{X_1}}^{\T}\;\J_{2m}$ follows.
By introducing an involution similar to $\iota$, the latter equation can be expressed more clearly. We multiply with $-\J_{2m}$ from the left and obtain
\beqn\label{eq:irreducibility:X1_symmetry}-\J_{2m}\;{\mat{X_3}}^{\T}\;\mat{X_3}=\mat{X_1}+\sigma(\mat{X_1})\;,\eeqn
with the involution $\sigma$ defined as follows: Let $\mat{B}$ be an arbitrary matrix with $k$ rows and $l$ columns. We write
\[\sigma(\mat{B}):=\mat{D_l}\;\mat{B}^{\T}\;\mat{D_k}\;,\]
where for an integer $i$ the matrix $\mat{D_i}$ is defined to be $\J_i$ if $i$ is even and $\Hh_i$ if $i$ is odd.
%and $\mat{D_k}$ is defined analogously.
We calculate
\[\sigma(\mat{X_1})=\J_{2m}\;{\mat{X_1}}^{\T}\;\J_{2m}=\twomat{-\iota({_{[m]}{\mat{X_1}}_{[m]}})}{\iota({_{[m]}{\mat{X_1}}^{[m]}})}{\iota({^{[m]}{\mat{X_1}}_{[m]}})}{-\iota({^{[m]}{\mat{X_1}}^{[m]}})}\]
and see that $\sigma$ is a ``signed reflection'' at the antidiagonal. Therefore, \eqref{eq:irreducibility:X1_symmetry} is to some extent a symmetry condition.

%...............................................................................

\subsection{Pi-Stability}
Over $\bar{k}$ and with respect to the chosen bases, the action induced by multiplication with $\pi\ot1$ is given by the matrix $\mat{\bar{\Pi}}=\twomats{}{\mat{0}_n}{\I_n}{}$. Condition (\hyperlink{hyp:reduced:Pi}{Pi}) requires that $\F$ is $\bar{\Pi}$-stable. This holds true if there is an equation $\mat{\bar{\Pi}}\;\F=\F\;\mat{R}$, with a square matrix $\mat{R}$ of size $n$. We get
\[\twovec{\mat{0}_n}{\mat{X}}=\twovec{\mat{X}\;\mat{R}}{\mat{R}}\]
and deduce $\mat{R}=\mat{X}$ and $\mat{X}^2=0$. The latter equation is in block form given by
\[\twomat{{\mat{X_1}}^2}{\mat{0}}{\mat{X_3}\;\mat{X_1}}{0}=0\;,\]
from which we deduce the identities
\begin{gather}
\label{eq:irreducibility:X1_nilpotent}{\mat{X_1}}^2=0\;,\\
\label{eq:irreducibility:X3_zerodiv}\mat{X_3}\;\mat{X_1}=0\;.
\end{gather}
Here we have used that $\mat{X_2}$ and $\mat{X_4}$ are both zero.

%...............................................................................

\subsection{Wedge Condition}
Because the last column of $\mat{X}$ is identically zero, (\hyperlink{hyp:reduced:W}{W}) translates into a wedge condition for the $(2m+1)\times2m$-matrix composed of the blocks $\mat{X_1}$ and $\mat{X_3}$:
\beqn\label{eq:irreducibility:wedgeac}\wedge^{s+1}\twovec{\mat{X_1}}{\mat{X_3}}=0\eeqn
(recall that $s<r$, and $\pi=0\in\bar{k}$).

%...............................................................................

\subsection{Action of the Symplectic Group}
We are left with pairs of matrices $(\mat{X_1},\mat{X_3})$ subject to conditions \eqref{eq:irreducibility:X1_symmetry}--\eqref{eq:irreducibility:wedgeac}. We denote this space of matrices by $N$.

Recall the definition of the symplectic group of size $2m$: it is the group of invertible $2m\times2m$-matrices that preserve the antisymmetric form given by $\J_{2m}$,
\[\Sp_{2m}=\{\mat{g}\in\GL_{2m};\;\mat{g}^{\T}\;\J_{2m}\;\mat{g}=\J_{2m}\}\;.\]
This is a linear algebraic group, which we consider over $k$ and which acts on $N$ from the right:
\beqn\label{eq:irreducibility:groupaction}N\times\Sp_{2m}\to N\;,\quad((\mat{X_1},\mat{X_3}),\;\mat{g})\mapsto(\mat{g}^{-1}\;\mat{X_1}\;\mat{g},\;\mat{X_3}\;\mat{g})\;.\eeqn
We consider the projection morphism on the second factor,
\[\pr_{\mat{X_3}}:\ N\to\A^{2m}\;,\quad(\mat{X_1},\mat{X_3})\mapsto \mat{X_3}\;,\]
which is equivariant for the action of $\Sp_{2m}$ (with the action on $\A^{2m}$ given in the obvious way). By studying the fibers of $\pr_{\mat{X_3}}$, we expect a better understanding of the whole space $N$.

We write $\mat{c_0}:=\fourvex{1}{0}{\dots}{0}$ for the row vector of $\A^{2m}$ that has a one as first entry and zeros in the remaining $2m-1$ columns.

\begin{lem}
The orbit of $\mat{c_0}$ under the action of the symplectic group consists of all nonzero row vectors of $\A^{2m}$; that is, we have a surjection
\[\{\mat{c_0}\}\times\Sp_{2m}\twoheadrightarrow\A^{2m}\setminus\{\mat{0}\}\;,\quad(\mat{c_0},\mat{g})\mapsto \mat{c_0}\;\mat{g}\;.\]
\end{lem}

\begin{proof}
Let an arbitrary row vector $\mat{c_1}\neq\mat{0}\in\A^{2m}(\bar{k})$ be given. We will construct a symplectic matrix $\mat{g}\in\Sp_{2m}(\bar{k})$ that has $\mat{c_1}$ as first row. Then the lemma follows since $\mat{c_0}$ is obviously mapped to $\mat{c_1}$ under multiplication with $\mat{g}$ from the right.

Because the symplectic form is nondegenerate, a vector $\mat{d_1}\in\bar{k}^{2m}$ exists that pairs with $\mat{c_1}$ to a nonzero $\lambda_1\in\bar{k}$. We normalize $\mat{d_1}$ by scaling with ${\lambda_1}^{-1}$. Then $\mat{c_1}$ and $\mat{d_1}$ span a symplectic subspace $W_1\subset\bar{k}^{2m}$, with the symplectic form given by the matrix $\J_{2}$. The orthogonal complement ${W_1}^{\perp}$ is of dimension $2m-2$ and again symplectic. We repeat the above process by choosing an arbitrary $\mat{c_2}\in{W_1}^{\perp}\setminus\{\mat{0}\}$. After $m$ steps, we have obtained $2m$ vectors $\mat{c_1},\mat{d_1},\dots,\mat{c_m},\mat{d_m}$, which constitute a basis of $\bar{k}^{2m}$.

We define a square matrix $\mat{g}$ of size $2m$ with these basis vectors as columns (in a different order):
\[\mat{g}:=\begin{pmatrix}{\mat{c_1}}^{\T}&\dots&{\mat{c_m}}^{\T}&{\mat{d_m}}^{\T}&\dots&{\mat{d_1}}^{\T}\end{pmatrix}\;.\]
We have $\mat{g}^{\T}\;\J_{2m}\;\mat{g}=\J_{2m}$ by construction, and therefore, $\mat{g}$ is symplectic. Then $\mat{g}^{\T}$ is symplectic as well. This is true because the last equation is equivalent to $\mat{g}^{-1}=-\J_{2m}\;\mat{g}^{\T}\;\J_{2m}$, and by transposing both sides, we get the corresponding equation for $\mat{g}^{\T}$. Since $\mat{g}^{\T}$ has $\mat{c_1}$ as first row, the lemma is proven.
\end{proof}

Because of this transitivity result, there are essentially two fibers to examine: on the one hand, we have to look at the fiber over the zero vector, and on the other hand, we have to determine the fiber over $\mat{c_0}$.

%...............................................................................

\subsection{Zero Fiber}
The next lemma describes the fiber over the zero vector.

\begin{lem}\label{lem:irreducibility:zerofiber}
The fiber ${\pr_{\mat{X_3}}}\negthickspace^{-1}(\mat{0})$ is given by the $k$-scheme of $2m\times2m$-matrices $\mat{X_1}$ that satisfy the conditions
\[{\mat{X_1}}^2=0\;,\quad\mat{X_1}+\sigma(\mat{X_1})=0\;,\quad\wedge^{s+1}\mat{X_1}=0\;.\]
This scheme is irreducible. It has dimension $(2m-s)s$ if $s$ is even and dimension $(2m-s+1)(s-1)$ if $s$ is odd. In both cases, the dimension is smaller than $rs$.
\end{lem}

\begin{proof}
The description of the fiber is obvious from \eqref{eq:irreducibility:X1_symmetry}--\eqref{eq:irreducibility:wedgeac}; in particular, because $\mat{X_3}=0$, the wedge condition 
\eqref{eq:irreducibility:wedgeac} translates into the wedge condition involving $\mat{X_1}$ only.

The stabilizer of the zero vector is the whole symplectic group, $\Stab_{\mat{0}}=\Sp_{2m}$. It acts by conjugation on the elements $\mat{X_1}$ contained in the zero fiber. Pappas and Rapoport \cite{PR:2007} have considered this matrix scheme in sect.~5.e.\ of their paper. In the notation of loc.cit., it coincides with the special fiber of the matrix scheme $U_{r',s}^{\wedge}$, where we have set $r':=2m-s$. It is shown in loc.cit.\ that the special fiber is irreducible and of dimension $r's$ if $s$ is even and of dimension $(r'+1)(s-1)$ if $s$ is~odd. Since $r'=r-1$, the lemma is proven.

The argument of loc.cit.\ is as follows. We consider the matrix scheme $V_{r',s}^{\wedge}$ of $2m\times2m$-matrices $\mat{X_1}$ over $k$ that satisfy the conditions
\[{\mat{X_1}}^2=0\;,\quad\wedge^{s+1}\mat{X_1}=0\;.\]
This scheme is the union of the nilpotent $\GL_{2m}$-conjugation orbits $\OO_{2^i,1^{2m-i}}$ with \mbox{$i\le s$}, 
which respectively contain the Jordan matrices with exactly $i$ nilpotent Jordan blocks of size two and all other blocks being zero. The orbits have dimension $2(2m-i)i$, respectively, and the following closure relation holds true \cite{PR:2003}*{Rem.~4.2}:
\beqn\label{eq:irreducibility:closurerel}\OO_{2^i,1^{2m-i}}\subset\overline{\OO_{2^j,1^{2m-j}}}\ \text{if and only if}\ i\le j\;.\eeqn
We denote the fixed point scheme of $V_{r',s}^{\wedge}$ under the involution $-\sigma$ by $U_{r',s}^{\wedge}$. The symplectic group acts on this scheme 
by conjugation; slightly abusing notation, we denote the corresponding nilpotent conjugation orbits by the same symbols as above. It follows from Prop.~1 of \cite{O:1986} that $U_{r',s}^{\wedge}$ is the union of the orbits 
$\OO_{2^i,1^{2m-i}}$ with even $i\le s$. By Thm.~1 of loc.cit., a closure relation as in \eqref{eq:irreducibility:closurerel} also holds true in this context. We conclude that $U_{r',s}^{\wedge}$ is the closure of $\OO_{2^{s},1^{r'}}$ if $s$ is even and the closure of $\OO_{2^{s-1},1^{r'+1}}$ if $s$ is odd. The irreducibility of the symplectic group implies the irreducibility of its orbits and their closures. The dimension of these $\Sp_{2m}$-orbits is half the dimension of the corresponding $\GL_{2m}$-orbits \cite{KR:1971}*{Prop.~5 and its proof}.
\end{proof}

%...............................................................................

\subsection{Nonzero Fiber}
The following lemma gives a description of the fiber over $\mat{c_0}$.

\begin{lem}\label{lem:irreducibility:nonzerofiber}
The fiber ${\pr_{\mat{X_3}}}\negthickspace^{-1}(\mat{c_0})$ is given by the $k$-scheme $N'$ of pairs of matrices $(\mat{Y_1},\mat{Y_2})$ subject to the following conditions:
\[{\mat{Y_1}}^2=0\;,\quad \mat{Y_1}+\sigma(\mat{Y_1})=0\;,\quad\wedge^s\twovec{\mat{Y_1}}{\mat{Y_2}}=0\;,\quad \mat{Y_2}\;\mat{Y_1}=0\;.\]
Here $\mat{Y_1}$ denotes a square matrix of size $2m-2$ and $\mat{Y_2}$ a row vector of size $2m-2$.
\end{lem}

\begin{proof}
We describe the matrices $\mat{X_1}$ lying over $\mat{c_0}$ by evaluating \eqref{eq:irreducibility:X1_symmetry}--\eqref{eq:irreducibility:wedgeac}.

Equation \eqref{eq:irreducibility:X3_zerodiv} applied with $\mat{X_3}=\mat{c_0}$ implies that the first row of $\mat{X_1}$ is zero.

Since $-\J_{2m}\;{\mat{c_0}}^{\T}\;\mat{c_0}=\mat{K}\;\Hh_{2m}$, the left hand side of \eqref{eq:irreducibility:X1_symmetry} is the square matrix with all entries zero but the lower left, which is one. As noted before, $\sigma$ is a signed reflection at the antidiagonal; hence, \eqref{eq:irreducibility:X1_symmetry} implies that $\mat{X_1}$ has the following form:
\beqn\label{eq:irreducibility:X1}\mat{X_1}=\threemat{0}{\mat{0}}{0}{\sigma(\mat{Y_2})}{\mat{Y_1}}{\mat{0}}{1/2}{\mat{Y_2}}{0}\;.\eeqn
Here $\mat{Y_1}$ is a square matrix of size $2m-2$ that satisfies the symmetry condition
\beqn\label{eq:irreducibility:Y1_symmetry}\mat{Y_1}+\sigma(\mat{Y_1})=0\;,\eeqn
and $\mat{Y_2}$ is a row vector with $2m-2$ columns.

Because $\mat{c_0}$ has a unit in the first entry and zeros everywhere else, \eqref{eq:irreducibility:wedgeac} translates via Laplace expansion along $\mat{c_0}$ into a wedge condition for the $(2m-1)\times(2m-2)$-matrix composed of $\mat{Y_1}$ and $\mat{Y_2}$:
\beqn\label{eq:irreducibility:Y_wedge}\wedge^s\twovec{\mat{Y_1}}{\mat{Y_2}}=0\;.\eeqn

By \eqref{eq:irreducibility:X1_nilpotent}, the square of $\mat{X_1}$ has to be zero. Using \eqref{eq:irreducibility:X1}, this results in
\begin{gather}
\label{eq:irreducibility:Y1_nilpotent}{\mat{Y_1}}^2=0\;,\\
\label{eq:irreducibility:Y2_zerodiv}\mat{Y_2}\;\mat{Y_1}=0\;.
\end{gather}

As asserted, equations \eqref{eq:irreducibility:Y1_symmetry}--\eqref{eq:irreducibility:Y2_zerodiv} describe the fiber over $\mat{c_0}$.
\end{proof}

Next, we determine the stabilizer of $\mat{c_0}\in\A^{2m}$ and its action on the fiber over $\mat{c_0}$.

%\protect\enlargethispage*{2.3em} %LAYOUT
\begin{lem}
The stabilizer $\Stab_{\mat{c_0}}\subset\Sp_{2m}$ of $\mat{c_0}$ is given by symplectic matrices $\mat{g}$ of the following form:
\[\mat{g}=\threemat1{}{}{-\mat{g_1}\;\sigma(\mat{g_2})}{\mat{g_1}}{}{g_3}{\mat{g_2}}1\;,\]
with a symplectic matrix $\mat{g_1}$ of size $2m-2$, a row vector $\mat{g_2}$ of corresponding size, and a scalar $g_3$. Referring to these matrices by giving the essential data in the form of a triple $(\mat{g_1},\mat{g_2},g_3)$, the induced action on $N'$ can be described as follows:
\begin{gather}
N'\times\Stab_{\mat{c_0}}\to N'\;,\notag\\
\label{eq:irreducibility:actionN'}((\mat{Y_1},\mat{Y_2}),\;(\mat{g_1},\mat{g_2},g_3))\mapsto({\mat{g_1}}^{-1}\;\mat{Y_1}\;\mat{g_1},\;\mat{Y_2}\;\mat{g_1}-\mat{g_2}\;{\mat{g_1}}^{-1}\;\mat{Y_1}\;\mat{g_1})\;.
\end{gather}
\end{lem}

\begin{proof}
Let $\mat{g}\in\Sp_{2m}$ stabilize $\mat{c_0}$. Then the first row of $\mat{g}$ has to be $\mat{c_0}$. We subdivide $\mat{g}$ into blocks,
\[\mat{g}=\threemat{1}{\mat{0}}{0}{\mat{g_4}}{\mat{g_1}}{\mat{g_5}}{g_3}{\mat{g_2}}{g_6}\;,\]
with a square matrix $\mat{g_1}$ of size $2m-2$, a row vector $\mat{g_2}$ with $2m-2$ columns, and a scalar $g_3$.
We evaluate the condition of $\mat{g}$ being symplectic, $\mat{g}^{\T}\;\J_{2m}\;\mat{g}=\J_{2m}$. By multiplying the matrices on the left hand side and comparing the blocks of the matrix equation, we obtain the following constraints on the blocks of $\mat{g}$:
\begin{gather*}{\mat{g_1}}^{\T}\;\J_{2m-2}\;\mat{g_1}=\J_{2m-2}\;, \\
{\mat{g_1}}^{\T}\;\J_{2m-2}\;\mat{g_5}=0\;, \\
g_6+{\mat{g_4}}^{\T}\;\J_{2m-2}\;\mat{g_5}=1\;, \\
\mat{g_4}=-\mat{g_1}\;\J_{2m-2}\;{\mat{g_2}}^{\T}\;.
\end{gather*}
The first equation implies that $\mat{g_1}$ is symplectic; in particular, $\mat{g_1}$ is regular. Then $\mat{g_5}=0$ follows from the second equation, which in turn implies $g_6=1$ by the third equation. Together with the last equation, we obtain the description stated in the lemma. Note that $\mat{g}$ is determined by the triple $(\mat{g_1},\mat{g_2},g_3)$.

To see how the stabilizer acts on $N'$, we first determine the inverse of the stabilizer element $\mat{g}=(\mat{g_1},\mat{g_2},g_3)$. Since $\mat{g}$ is in particular symplectic, the inverse is given by $\mat{g}^{-1}=-\J_{2m}\;\mat{g}^{\T}\;\J_{2m}$. Multiplying the matrices on the right yields $\mat{g}^{-1}=(\mat{g_1}^{-1},-\mat{g_2}\;\mat{g_1}^{-1},-g_3)$.
Next, let an arbitrary element $(\mat{Y_1},\mat{Y_2})\in N'$ be given, with corresponding matrix $\mat{X_1}\in{\pr_{\mat{X_3}}}\negthickspace^{-1}(\mat{c_0})$. We calculate that the conjugate element $\mat{g}^{-1}\;\mat{X_1}\;\mat{g}$ corresponds to $({\mat{g_1}}^{-1}\;\mat{Y_1}\;\mat{g_1},\;\mat{Y_2}\;\mat{g_1}-\mat{g_2}\;{\mat{g_1}}^{-1}\;\mat{Y_1}\;\mat{g_1})\in N'$; therefore, the action of $\Stab_{c_0}$ on $N'$ is given in the asserted way.
\end{proof}

\begin{rem}
The entry $g_3$ of an element $(\mat{g_1},\mat{g_2},g_3)\in\Stab_{\mat{c_0}}$ does not occur on the right hand side of \eqref{eq:irreducibility:actionN'}; hence, it has no effect on the induced action on $N'$.
\end{rem}

\begin{rem}
The symplectic group of size $2m-2$ can be regarded as a subgroup of the stabilizer of $\mat{c_0}$: we have the inclusion morphism
\[\Sp_{2m-2}\hookrightarrow\Stab_{\mat{c_0}}\;,\quad \mat{g_1}\mapsto(\mat{g_1},0,0)\;.\]
The corresponding action on $N'$ is given by
\[N'\times\Sp_{2m-2}\to N'\;,\quad ((\mat{Y_1},\mat{Y_2}),\;\mat{g_1})\mapsto({\mat{g_1}}^{-1}\;\mat{Y_1}\;\mat{g_1},\;\mat{Y_2}\;\mat{g_1})\;,\]
which is completely analogous to \eqref{eq:irreducibility:groupaction}.
\end{rem}

Recall that we have set $r'=2m-s$. We consider the $k$-scheme $U_{r'-1,s-1}^{\wedge}$ defined analogously to the matrix scheme $U_{r',s}^{\wedge}$ from the proof of Lem.~\ref{lem:irreducibility:zerofiber}: it is given by square matrices $\mat{Y_1}$ of size $2m-2$ satisfying ${\mat{Y_1}}^2=0$, $\mat{Y_1}+\sigma(\mat{Y_1})=0$, and $\wedge^{s}\,\mat{Y_1}=0$.
%TYPESET: Absichtlich ein \, nach wedge, da im Textmodus das Argument zu nah an das Dach heranrueckt.
The symplectic group acts on this scheme by conjugation and $U_{r'-1,s-1}^{\wedge}$ is the union of the finitely many $\Sp_{2m-2}$-orbits $\OO_{2^i,1^{2m-2-i}}$ with even $i\le s-1$. The orbits are irreducible, have dimension $(2m-2-i)i$, and a closure relation analogous to \eqref{eq:irreducibility:closurerel} holds true. Hence, there is an open dense orbit; it is $\OO_{2^{s-1},1^{r'-1}}$ if $s-1$ is even and $\OO_{2^{s-2},1^{r'}}$ if $s-1$ is odd. 

The first component $\mat{Y_1}$ of a point $(\mat{Y_1},\mat{Y_2})\in N'$ gives a point in $U_{r'-1,s-1}^{\wedge}$. This is true because \eqref{eq:irreducibility:Y_wedge} implies in particular $\wedge^s\,\mat{Y_1}=0$.
%TYPESET: Absichtlich ein \, nach wedge, da im Textmodus das Argument zu nah an das Dach heranrueckt.
We study the projection morphism on the first factor,
\[\pr_{\mat{Y_1}}:\ N'\to U_{r'-1,s-1}^{\wedge}\;,\quad(\mat{Y_1},\mat{Y_2})\mapsto \mat{Y_1}\;,\]
which is equivariant for the action of $\Sp_{2m-2}$.

\begin{lem}
Over each orbit $\OO_{2^i,1^{2m-2-i}}$ with even $i\le s-1$, the projection morphism $\pr_{\mat{Y_1}}:\ N'\to U_{r'-1,s-1}^{\wedge}$ is a fibration into affine spaces. The inverse images of these orbits are irreducible subsets that partition $N'$. The inverse image of the open dense orbit has dimension $(2m-s)(s-1)$; the inverse images of the other orbits have smaller dimension.
\end{lem}

\begin{proof}
We fix an orbit $\OO_{2^i,1^{2m-2-i}}$ with even $i\le s-1$ and consider an arbitrary point $\mat{Y_1}$ thereof. We determine the points of $N'$ lying above $\mat{Y_1}$; that is, we identify the vectors $\mat{Y_2}$ giving elements $(\mat{Y_1},\mat{Y_2})\in N'$. The cases $i=s-1$ and $i<s-1$ are to be distinguished.

In the former case, the rank of the matrix $\mat{Y_1}$ equals $s-1$; thus, \eqref{eq:irreducibility:Y_wedge} implies that $\mat{Y_2}$ belongs to the image of $\mat{Y_1}$, and we can write $\mat{Y_2}=\mat{a}\;\mat{Y_1}$, with a row vector $\mat{a}$ of size $2m-2$. Then \eqref{eq:irreducibility:Y2_zerodiv}, which is the second condition mixing $\mat{Y_1}$ and $\mat{Y_2}$, automatically holds true: $\mat{Y_2}\;\mat{Y_1}=\mat{a}\;{\mat{Y_1}}^2=0$.
It follows that exactly the elements in the image of $\mat{Y_1}$, which is an $s-1$-dimensional vector space, correspond to points $(\mat{Y_1},\mat{Y_2})\in N'$. Locally on $\OO_{2^i,1^{2m-2-i}}$, this gives trivializations with linear isomorphisms as transition maps; in other words, we get a vector bundle over the orbit $\OO_{2^i,1^{2m-2-i}}$.

If $i<s-1$, \eqref{eq:irreducibility:Y_wedge} is automatically satisfied since the rank $i$ of $\mat{Y_1}$ is smaller than $s-1$. Hence, $\mat{Y_2}$ determines a point $(\mat{Y_1},\mat{Y_2})\in N'$ if and only if \eqref{eq:irreducibility:Y2_zerodiv} is satisfied, that is, if and only if $\mat{Y_2}$ lies in the kernel of $\mat{Y_1}$. It follows that every fiber is a vector space of dimension $2m-2-i$. Again, we get a vector bundle over the orbit $\OO_{2^i,1^{2m-2-i}}$.

The total space of a vector bundle over an irreducible base is irreducible, and its dimension is the sum of the base dimension and the typical fiber dimension. Hence, the dimension of the inverse image of the open dense orbit is calculated to be \mbox{$(2m-2-(s-1))(s-1)+(s-1)$} if $s-1$ is even and ${(2m-2-(s-2))(s-2)}+(2m-2-(s-2))$ if $s-1$ is odd. In both cases, this equals $(2m-s)(s-1)$. The inverse images of the other orbits (corresponding to even $i<s-2$) have smaller dimension $(2m-2-i)(i+1)$: note that $i+1<s-1\le m-1$.
\end{proof}

\begin{rem}
The action of $\Stab_{\mat{c_0}}$ on the inverse image of the open dense orbit is transitive if $s-1$ is even and not transitive if $s-1$ is odd. The actions on the inverse images of the other orbits are not transitive.
\end{rem}

Taking the respective closures in $N'$ of the inverse images of the orbits and omitting redundant terms yields the decomposition of the fiber over $\mat{c_0}$ into irreducible components:

\begin{cor}\label{cor:irreducibility:nonzerofiber}
The fiber ${\pr_{\mat{X_3}}}\negthickspace^{-1}(\mat{c_0})$ contains an irreducible component $Z_{\max}$ of dimension $(2m-s)(s-1)$. All other irreducible components, $Z_{\gamma}$ with $\gamma\in\Gamma$ (and $\Gamma$ a finite, possibly empty index set), have smaller dimension.\qed
\end{cor}

%...............................................................................

\subsection{Action of the Symplectic Group (continued)}
The action \eqref{eq:irreducibility:groupaction} of the symplectic group $\Sp_{2m}$ on $N$ gives rise to the surjective morphism
\begin{flalign*}
\phi:\ &{\pr_{\mat{X_3}}}\negthickspace^{-1}(\mat{c_0})\times\Sp_{2m}\to{\pr_{\mat{X_3}}}\negthickspace^{-1}(\mat{X_3}\neq\mat{0})\;,\\
&((\mat{X_1},\mat{c_0}),\;\mat{g})\mapsto(\mat{g}^{-1}\;\mat{X_1}\;\mat{g},\;\mat{c_0}\;\mat{g})\;.
\end{flalign*}
We consider the images under $\phi$ of the sets $Z_{\max}\times\Sp_{2m}$ and $Z_{\gamma}\times\Sp_{2m}$ with $\gamma\in\Gamma$: we denote the closures in $N$ by $Z'_{\max}$ and $Z'_{\gamma}$ with $\gamma\in\Gamma$, respectively.

\begin{lem}
The sets $Z'_{\max}$ and $Z'_{\gamma}$ with $\gamma\in\Gamma$ are irreducible subsets of $N$. The dimension of $Z'_{\max}$ equals $rs$. Any $Z'_{\gamma}$ with $\gamma\in\Gamma$ has smaller dimension.
\end{lem}

\begin{proof}
The irreducibility is obvious since images of irreducible subsets under morphisms are irreducible, and so are their closures. As for the dimension assertion, we consider the restriction of the projection morphism $\pr_{\mat{X_3}}$ to $\phi(Z_{\max}\times\Sp_{2m})$:
\[{\pr_{\mat{X_3}}}|\phi(Z_{\max}\times\Sp_{2m}):\ \phi(Z_{\max}\times\Sp_{2m})\to\A^{2m}\setminus\{\mat{0}\}\;,\quad(\mat{X_1},\mat{X_3})\mapsto\mat{X_3}\;.\]
This is a surjective morphism between irreducible schemes of finite type over $k$, with all fibers isomorphic to $Z_{\max}$. The base dimension and the typical fiber dimension sum up to the dimension of the total space \citelist{\cite{EGA:IV:3}*{Thm.~13.2.3} \cite{H:1977}*{Ex.~II.3.22}}.
Since $\phi(Z_{\max}\times\Sp_{2m})$ has the same dimension as its closure, 
we calculate $\dim{Z'_{\max}}=2m+(2m-s)(s-1)=rs$.
Analogous reasoning shows that the dimension of the other subsets is smaller.
\end{proof}

By Lem.~\ref{lem:irreducibility:zerofiber}, the subset ${\pr_{\mat{X_3}}}\negthickspace^{-1}(\mat{0})$ is irreducible of dimension smaller than $rs$. Together with $Z'_{\max}$ and $Z'_{\gamma}$ with $\gamma\in\Gamma$, we get a finite covering of $N$ by irreducible subsets. 
By omitting redundant terms, we obtain the decomposition of $N$ into irreducible components:

\begin{cor}\label{cor:irreducibility:decomposition}
The scheme $N$ contains the irreducible component $Z'_{\max}$, which has dimension $rs$. All other irreducible components of $N$ (if there are any at all) have smaller dimension.\qed
\end{cor}

%...............................................................................

\subsection{Intersection with the Local Model}
We will now pass to the local model, therewith finishing the proof of the proposition. The arguments resemble those from the proof of Thm.~\ref{thm:reduced:reduced}.

Recall that in the current section all schemes are over $k$ and equipped with the reduced structure (unless explicitly mentioned otherwise). The standard open subset $\Grass_{n,2n}^J\times\Grass_{n,2n}^J$ of the product of Grassmannians is abbreviated to $U$. As usual, $\bar{M}_I^{\loc}$ denotes the special fiber of the local model and $\bar{M}_I^{\wedge}$ the special fiber of the wedge local model. We have closed immersions 
\[{\bar{M}_I^{\loc}\cap U}\subset{\bar{M}_I^{\wedge}\cap U}\subset N\;.\]
Following the same arguments as given in the proof of Thm.~\ref{thm:reduced:reduced}, we deduce that the open subset ${\bar{M}_I^{\loc}\cap U}$ of the special fiber of the local model coincides with the irreducible component $Z'_{\max}$ of $N$: On the one hand, ${\bar{M}_I^{\loc}\cap U}$ is nonempty (it contains the special point $(\F_0,\G_0)$, see sect.~\ref{ssec:irreducibility:worstpoint}) and equidimensional of dimension~$rs$. On the other hand, by Cor.~\ref{cor:irreducibility:decomposition}, the decomposition of $N$ into irreducible components is given by $Z'_{\max}$, which has dimension $rs$, and irreducible components of smaller dimension (if there are any at all).

We conclude that $\bar{M}_I^{\loc}\cap U$ is an irreducible open neighborhood of the point $(\F_0,\G_0)$. This completes the proof of the proposition and, hence, also of Thm.~\ref{thm:irreducibility:irreducible}. With the ``second step'' established, the main theorem is finally proven.
\end{proof}

\begin{rem}\label{rem:irreducibility:oneextremeorbit}
In the case considered, the set $\AAA^I(\mu)$ (which was mentioned in sect.~\ref{ssec:reduced:bestpoint}) is the closure of a single extreme orbit and coincides with the geometric special fiber of the local model. This follows by dimension arguments in the same manner as above: note that the open subset constructed in sect.~\ref{sec:reduced} is a neighborhood of one of the best points and has dimension $rs$, and by the results of this section, the (geometric) special fiber of the local model is irreducible of dimension $rs$.
\end{rem}

%-------------------------------------------------------------------------------

\section{Other Special Parahoric Level Structures}
In the final section, we take a look at the cases treated by Pappas and Rapoport (see Rem.~\ref{rem:specialparahoric:Rapoport}). Transferring our methods from sect.~\ref{sec:reduced} to this situation, we obtain analogs of Thm.~\ref{thm:reduced:reduced} and Prop.~\ref{prp:reduced:affine}. In this way, we can strengthen some of Pappas and Rapoport's results.

\begin{thm}\label{thm:other:reduced}
Let $I=\{0\}$ if $n=2m+1$ is odd and $I=\{m\}$ if $n=2m$ is even. Then the local model $M_I^{\loc}$ contains an affine space of dimension $rs$ as open subset.
\end{thm}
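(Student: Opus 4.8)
The plan is to mirror Section~\ref{sec:reduced}. For each of the two cases I would first prove the analog of Prop.~\ref{prp:reduced:affine} --- that $M_I^{\wedge}$ contains an affine space of dimension $rs$ as an open subset, which is a neighborhood of a suitable best point --- and then deduce the statement for $M_I^{\loc}$ by exactly the dimension argument used in the proof of Thm.~\ref{thm:reduced:reduced}.

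First I would specialize the definition of the wedge local model as in sect.~\ref{ssec:reduced:wedgelocalmodel}. In both cases the essential part of the periodic lattice chain reduces (modulo the $\Pi$-periodicity and the duality (\hyperlink{hyp:reduced:N3}{N3})) to a \emph{single} lattice --- $\Lambda_0\subset\pi^{-1}\Lambda_0$ when $n=2m+1$ is odd and $I=\{0\}$, and $\Lambda_m\subset\pi^{-1}\Lambda_m$ when $n=2m$ is even and $I=\{m\}$ --- so a point over an $\OO_E$-algebra $A$ is a single $\OO_F\ot A$-submodule $\F$ of $\Lambda\ot A$ subject to (\hyperlink{hyp:reduced:N1}{N1}), (\hyperlink{hyp:reduced:N2}{N2}), (\hyperlink{hyp:reduced:N4}{N4}), (\hyperlink{hyp:reduced:W}{W}), (\hyperlink{hyp:reduced:Pi}{Pi}), together with a self-duality condition replacing (\hyperlink{hyp:reduced:N3}{N3}): $\F=\F^{\perp}$ for $I=\{0\}$, and $\Pi\,\F=\F^{\perp}$ for $n=2m$ even, the relevant pairing being the alternating form $\langle\ ,\ \rangle\ot A$, represented by an explicit signed antidiagonal matrix in place of $\mat{M}$. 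Following Rem.~\ref{rem:reduced:bestpoint} I would pick the best point $\F_1$ as the span of the appropriate sub-collection of the basis vectors $f_i,\pi f_i$ --- the correct choice being read off from sect.~3.d of \cite{PR:2007} by taking the corresponding subset $S$ --- work in the standard chart $\Grass_{n,2n}^J$ whose index set $J$ is the complement of that collection, and write $\F$ as a $2n\times n$-matrix of block shape as in \eqref{eq:reduced:F}. Evaluating the conditions then produces matrix identities exactly as in sects.~\ref{ssec:reduced:pi-stability}--\ref{ssec:reduced:latticeinclusion}: (\hyperlink{hyp:reduced:Pi}{Pi}) gives a matrix $\mat{R}$ analogous to \eqref{eq:reduced:R} expressing some blocks through others; (\hyperlink{hyp:reduced:W}{W}), applied to $\mat{R}\pm\pi\,\I_n$ via Laplace expansion as in the passage around \eqref{eq:reduced:s_b_r-s}--\eqref{eq:reduced:r-s_b_r-s}, gives linear relations; (\hyperlink{hyp:reduced:N4}{N4}) is then automatic; and (\hyperlink{hyp:reduced:N2}{N2}) together with the self-duality condition impose $\iota$-type (anti)symmetry relations on the surviving blocks, just as conditions (\hyperlink{hyp:reduced:C1}{C1})--(\hyperlink{hyp:reduced:C10}{C10}) did. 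Counting the free variables gives an affine space, whose dimension must be $rs$ since, by Lem.~\ref{lem:localmodel:dimensiongenericfiber}, this chart has to embed into the irreducible generic fiber of dimension $rs$.

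The transition to the local model is then verbatim the argument in the proof of Thm.~\ref{thm:reduced:reduced}. Writing $Y$ for the closed subscheme of the relevant Grassmannian cut out by the conditions treated above and $U$ for the standard open chart, one has $M_I^{\loc}\cap U\subset M_I^{\wedge}\cap U\subset Y\cap U$; by Lem.~\ref{lem:localmodel:dimensiongenericfiber}, Chevalley's theorem, and flatness, the special fiber of $M_I^{\loc}$ is equidimensional of dimension $rs$; since $Y\cap U$ is reduced and irreducible of dimension $rs$ and, by Rem.~\ref{rem:reduced:bestpoint}, contains the best point $\F_1$, which lies in $M_I^{\loc}$, both inclusions are equalities on reduced schemes, so $Y\cap U=M_I^{\loc}\cap U$ is an open subscheme of $M_I^{\loc}$ isomorphic to $\A^{rs}$. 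Alternatively, one may invoke Pappas and Rapoport's Thm.~5.1 of \cite{PR:2007}, which already gives that the special fiber of $M_I^{\loc}$ in these cases is irreducible and reduced of dimension $rs$, in place of the flatness argument.

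The main obstacle is not conceptual but combinatorial: the two cases differ from the case $n=2m+1$ odd, $I=\{m\}$ of Section~\ref{sec:reduced}. For $I=\{0\}$ one works with the alternating form $\langle\ ,\ \rangle$ rather than the mixed symmetric pairing, so the form matrix and the explicit orthogonal-complement formula of Lem.~\ref{lem:reduced:orthogonalcomplement} must be recomputed; for $n=2m$ even, $I=\{m\}$, the lattice $\Lambda_m$ is $\pi$-modular and the duality reads $\Pi\,\F=\F^{\perp}$, which is of symplectic type. In each case one has to determine the correct best point, the correct chart $J$, and the precise sizes of the matrix blocks --- the same pitfall flagged after \eqref{eq:reduced:idPiStable} --- and then rerun the computations; once that bookkeeping is done, the free-variable count again comes out to $rs$ and the remainder is formal.
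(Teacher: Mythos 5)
Your overall strategy matches the paper's: prove the analog of Prop.~\ref{prp:reduced:affine} for the wedge local model in each case, then transfer to $M_I^{\loc}$ by the flatness/dimension argument of Thm.~\ref{thm:reduced:reduced}. The paper's Section~5 does exactly this, with the best point $\F_1 = \spn_k\{e_1,\dots,e_s,\pi e_1,\dots,\pi e_r\}$ for $I=\{0\}$ and the analogous $f$-span for the even case, the same chart choice $J$, and the same evaluation of the conditions via block matrices. One small presentational difference: in the even case the paper does not phrase the duality as $\Pi\,\F=\F^\perp$ for the alternating form between $\Lambda_{-m}$ and $\Lambda_m$, but rather identifies $\Lambda_{-m}$ with $\Lambda_m$ via $\pi$ and works with the \emph{symmetric} form $(\mbox{ , })$, represented by the symmetric matrix $M''$; the two are equivalent, but your characterization of the even case as ``of symplectic type'' is off --- it is the odd case $I=\{0\}$ that is symplectic (alternating $\langle\ ,\ \rangle$), and the even case is orthogonal. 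Also, in these one-lattice cases (N2) contributes nothing beyond (Pi), so the paper drops it entirely and derives the extra constraints from (N3) alone.

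There is, however, a genuine gap in your dimension step. You write that the affine space cut out by the evaluated conditions ``must'' have dimension $rs$ because ``this chart has to embed into the irreducible generic fiber of dimension $rs$.'' That inclusion runs the wrong way: the scheme $Y$ you cut out uses only a \emph{subset} of the wedge-local-model conditions, so $Y\cap U\supset M_I^{\wedge}\cap U$, and its generic fiber \emph{contains}, rather than embeds into, the generic fiber of $M_I^{\wedge}\cap U$. Lemma~\ref{lem:localmodel:dimensiongenericfiber} therefore gives only a \emph{lower} bound $\dim\ge rs$. The equality $\dim(Y\cap U)=rs$ is precisely what has to be established, and the paper does it by an explicit count of free variables: for $I=\{0\}$ one gets $s(s+1)/2$ free $c$-variables from the $\iota$-symmetry of $c$ and $s(r-s)+s(s-1)/2$ free $a$-variables from the relation \eqref{eq:other:maineq}; for even $n$, $I=\{m\}$, the $c$-variables contribute $s(s-1)/2$ (antisymmetry) and the $a$-variables $(r-s)s+s(s+1)/2$ from \eqref{eq:other:maineq2}; both sums equal $rs$. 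Your final paragraph gestures at this bookkeeping, but the shortcut you offer instead is not a valid deduction, and the count is the heart of the argument --- without it the subsequent $Y\cap U=M_I^{\loc}\cap U$ step does not close, since it requires the special fiber of $Y\cap U$ to have dimension exactly $rs$.
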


\begin{proof}
Arguing as in the proof of Thm.~\ref{thm:reduced:reduced}, this is a consequence of the next proposition.
\end{proof}

\begin{prp}
Let $I=\{0\}$ if $n=2m+1$ is odd and $I=\{m\}$ if $n=2m$ is even. Then the wedge local model $M_I^{\wedge}$ contains an affine space of dimension $rs$ as open subset.
\end{prp}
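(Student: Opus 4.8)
The plan is to repeat, \emph{mutatis mutandis}, the argument of Prop.~\ref{prp:reduced:affine} together with the dimension reduction from the proof of Thm.~\ref{thm:reduced:reduced}. The structural feature that makes these two cases manageable is that the index set selects a \emph{single} standard lattice up to $\pi$-powers: for $n=2m+1$ odd and $I=\{0\}$ the chain is $\cdots\to\pi\Lambda_0\to\Lambda_0\to\pi^{-1}\Lambda_0\to\cdots$, and for $n=2m$ even and $I=\{m\}$ it is $\cdots\to\pi\Lambda_m\to\Lambda_m\to\pi^{-1}\Lambda_m\to\cdots$, because the relevant $\Lambda$ is already selfdual. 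Writing $\Lambda$ for this lattice, a point of $M_I^{\wedge}$ over an $\OO_E$-algebra $A$ is then a single $\OO_F\ot A$-submodule $\F\subset\Lambda\ot A$, locally a direct summand of rank $n$, which is $\Pi$-stable, equals its own orthogonal complement $\F^{\perp}$ under the induced perfect pairing on $\Lambda\ot A$ (the alternating form $\langle\,,\,\rangle$ when $n$ is odd and $I=\{0\}$, the symmetric form $(\,,\,)$ when $n$ is even and $I=\{m\}$), and satisfies (\hyperlink{hyp:localmodel:N4}{N4}) and (\hyperlink{hyp:localmodel:W}{W}). Observe that (\hyperlink{hyp:localmodel:N2}{N2}) contributes nothing new here: the only lattice inclusion in the chain, $\Lambda\subset\pi^{-1}\Lambda$, becomes the map $\Pi$ after composing with the periodicity isomorphism, so it is subsumed by $\Pi$-stability. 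This is the point at which the present situation is genuinely simpler than that of sect.~\ref{sec:reduced}, where two mutually dual lattices were involved.

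First I would fix the ``best point'' $\F_1\subset\Lambda\ot k$, the coordinate subspace spanned by a suitable choice of $s$ of the images of the $\pi^{-1}e_i$ and $r$ of their $\pi$-multiples, in exact analogy with $\F_1$ of Rem.~\ref{rem:reduced:bestpoint}; one checks directly that it lies in the special fiber of $M_I^{\wedge}$, and, via sect.~3.d of \cite{PR:2007}, that it is one of the best points, hence lies in $M_I^{\loc}$. Then I would work in the standard affine chart $U=\Grass_{n,2n}^J$ of the Grassmannian whose canonical section passes through $\F_1$, writing $\F$ as a $2n\times n$-matrix with unit-matrix blocks in the $J$-rows and a free $n\times n$-matrix $\mat X$ in the complementary rows. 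The conditions would be evaluated in order: the self-duality $\F=\F^{\perp}$, that is $\F^{\T}\mat N\F=0$ with $\mat N$ the Gram matrix of the pairing, becomes a symmetry resp.\ antisymmetry condition on $\mat X$, cutting the chart down to the standard chart of the corresponding Lagrangian or maximal-isotropic Grassmannian, which is again an affine space; $\Pi$-stability becomes $\Pi\F=\F\mat R$ for an $n\times n$-matrix $\mat R$ which must be the matrix of $\Pi|\F$, forcing a block shape on $\mat X$ and on $\mat R$; the characteristic-polynomial condition (\hyperlink{hyp:localmodel:N4}{N4}) and the wedge condition (\hyperlink{hyp:localmodel:W}{W}) then impose further linear, respectively multilinear, identities on the blocks, in the manner of sects.~\ref{ssec:reduced:wedgecondition}--\ref{ssec:reduced:pi-stability2}. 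I expect these identities to express all but $rs$ of the entries of $\mat X$ as polynomials in a distinguished set of free entries, exhibiting the common zero locus of this ``easy'' part of the conditions as an affine space of dimension $rs$. One then passes to $M_I^{\wedge}$ exactly as in Rem.~\ref{rem:reduced:conditionssufficed}: the open subset $M_I^{\loc}\cap U$ is nonempty, since it contains $\F_1$, and, by flatness and Lem.~\ref{lem:localmodel:dimensiongenericfiber}, equidimensional of dimension $rs$; sandwiched between it and the reduced affine space cut out by the easy conditions sits $M_I^{\wedge}\cap U$, so all three coincide, and in particular $M_I^{\wedge}$ contains the desired $\A^{rs}$ as an open subset, with the remaining conditions holding automatically on it.

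The main obstacle will be the explicit block-matrix bookkeeping needed to make the count of free parameters come out to exactly $rs$. Because $\Lambda$ is selfdual through a single bilinear form rather than through a pair of dual lattices, the shapes of $\F$, of the matrix of $\Pi$, and of the Gram matrix $\mat N$ all differ from those of sect.~\ref{sec:reduced}, so the list of block identities and the selection of free variables must be redone from the ground up; the delicate part is the interaction of the characteristic-polynomial and wedge conditions with the duality symmetry, in particular keeping track of how the reflections at the antidiagonal (the involutions $\iota$, $\sigma$) act on the relevant blocks. A secondary point demanding care is the even case $n=2m$: since in the parallel situation of Lem.~\ref{lem:irreducibility:zerofiber} the parity of $s$ affects the outcome, I would expect the block computation here to branch according to whether $s$ is even or odd (cf.\ the parity hypothesis in Rem.~\ref{rem:specialparahoric:flat}), even though the final dimension $rs$ should be the same in both cases. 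Finally, the two index sets $I=\{0\}$ and $I=\{m\}$ could be handled either in parallel or with a short case distinction governed by which of $\langle\,,\,\rangle$ and $(\,,\,)$ makes the chosen lattice selfdual.
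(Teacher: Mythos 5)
Your plan is correct and is essentially the paper's proof: evaluate the wedge-local-model conditions in the best-point chart, cut out an affine space of dimension $rs$, and pass to $M_I^{\wedge}\cap U$ by the sandwich argument of Thm.~\ref{thm:reduced:reduced}. The paper merely orders the conditions differently from you --- it evaluates $\Pi$-stability, (\hyperlink{hyp:localmodel:W}{W}), and (\hyperlink{hyp:localmodel:N4}{N4}) first, so as to reuse the block identities \eqref{eq:reduced:PiStable}--\eqref{eq:reduced:a_s} verbatim from Prop.~\ref{prp:reduced:affine}, and only then imposes the self-duality (\hyperlink{hyp:localmodel:N3}{N3}), checking the leftover $\mat b$-constraint directly rather than via the sandwich --- and, contrary to your expectation, the even case $n=2m$, $I=\{m\}$ does \emph{not} branch on the parity of $s$: the identities \eqref{eq:other:c2}--\eqref{eq:other:maineq2} close to exactly $rs$ free parameters uniformly in $s$.
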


\begin{proof}
In the next subsection, we handle the case $n=2m+1$ odd, $I=\{0\}$. In the subsection thereafter, the case $n=2m$ even, $I=\{m\}$ is dealt with.

%...............................................................................

\subsection{Odd Case}\label{ssec:other:odd}
Let $n=2m+1$ be odd and $I=\{0\}$. The essential part of the selfdual periodic lattice chain is given by
\[\ldots\rightarrow\Lambda_0\rightarrow\ldots\;,\]
with the standard lattice $\Lambda_0=\spn_{\OO_F}\{e_1,\dots,e_n\}$. Over $\OO_{F_0}$, we have the corresponding basis $e_1,\dots,e_n,\pi e_1,\dots,\pi e_n$. 

We examine $A$-valued points of $M_I^{\wedge}$, with $A$ an arbitrary $\OO_E$-algebra; that is, we consider $\OO_F\ot A$-submodules $\F\subset\Lambda_0\ot A$ subject to the conditions of the wedge local model. In particular, we deal with $A$-valued points of the Grassmannian $\Grass_{n,2n}$. Again, it is sufficient to consider the standard open subset $\Grass_{n,2n}^J$, where $J$ is the complement of the index set that corresponds to the basis elements $e_1,\dots,e_s,\pi e_1,\dots,\pi e_r$. The motivation for this choice of $J$ is the same as in sect.~\ref{sec:reduced}: we construct an open subset containing one of the best points; see Rem.~\ref{rem:other:bestpoint}.

The elements of $\Grass_{n,2n}^{J}$ can be described as the column span of $2n\times n$-matrices $\F$ as in \eqref{eq:reduced:F}. The remaining conditions of the wedge local model translate into:
\begin{enumerate}
\item[(\hypertarget{hyp:other:N3}{N3})] $\F=\F^\perp$, with $\F^\perp$ denoting the orthogonal complement of $\F$ under the natural perfect pairing
\[\langle\mbox{ , }\rangle\ot A:\ (\Lambda_0\ot A)\times(\Lambda_0\ot A)\to A\;.\]
With respect to the chosen basis, the form is represented by the antisymmetric matrix $-\J_{2n}$.
\item[(\hypertarget{hyp:other:N4}{N4})] The characteristic polynomial of $\Pi|\F$ is given by
\[\det(T\;\id-\Pi|\F)=(T-\pi)^s(T+\pi)^r\in A[T]\;.\]
\item[(\hypertarget{hyp:other:W}{W})] We have
\begin{gather*}
\wedge^{r+1}(\Pi-\sqrt{\pi_0}|\F)=0\;,\\
\wedge^{s+1}(\Pi+\sqrt{\pi_0}|\F)=0\;.
\end{gather*}
\item[(\hypertarget{hyp:other:Pi}{Pi})] $\F$ is $\Pi$-stable.
\end{enumerate}

The parts of the proof of Prop.~\ref{prp:reduced:affine} concerning (\hyperlink{hyp:other:N4}{N4}), (\hyperlink{hyp:other:W}{W}), and (\hyperlink{hyp:other:Pi}{Pi}) are identically applicable to the current case, yielding the same identities \eqref{eq:reduced:PiStable}--\eqref{eq:reduced:a_s} for the variables $a$, $b$, $c$, and $d$ of the subspaces $\F$.
In particular, $b$ is determined in terms of the other variables by \eqref{eq:reduced:b}.

Contrary to the previous case, the condition concerning the restrictions of the lattice inclusion maps is trivial this time. Instead, (\hyperlink{hyp:other:N3}{N3}) gives further constraints on the $a$-, $c$-, and $d$-variables. It is enough to show that $\F\subset\F^\perp$. This translates into three subconditions for the columns of $\F$:
\begin{enumerate}
\item[(\hypertarget{hyp:other:N3a}{N3.a})] The first $s$ columns are perpendicular to each other.
\item[(\hypertarget{hyp:other:N3b}{N3.b})] The first $s$ columns are perpendicular to the last $r$ columns.
\item[(\hypertarget{hyp:other:N3c}{N3.c})] The last $r$ columns are perpendicular to each other.
\end{enumerate}

The first condition gives
\beqn\label{eq:other:c}\mat{c}=\iota(\mat{c})\;,\eeqn
which means that $c$ has to be symmetric with respect to the antidiagonal. The solution space of this system of linear equations has dimension ${s(s+1)}/{2}$.

Condition (\hyperlink{hyp:other:N3b}{N3.b}) reads $\Hh_s\;\mat{d}+\mat{a}^{\T}\;\Hh_r=0$. We rearrange and multiply with $\Hh_s$ from the left to get 
\beqn\label{eq:other:d}\mat{d}=-\iota(\mat{a})\;.\eeqn
Hence, the $d$-variables are determined by the $a$-variables. We split the last equation into $^{[s]}\mat{d}=-\iota(\mat{a}_{[s]})$ and $_{[r-s]}\mat{d}=-\iota(\mat{a}^{[r-s]})$; with these identities, the corresponding terms in \eqref{eq:reduced:a_s} are substituted. 
Rearranging yields
\beqn\label{eq:other:maineq}(\id+\iota)(\mat{a}_{[s]})=-\iota(\mat{a}^{[r-s]})\;\mat{a}^{[r-s]}\;.\eeqn
This is analogous to the situation at the end of sect.~\ref{ssec:reduced:latticeinclusion}:
both sides of \eqref{eq:other:maineq} 
are symmetric with respect to the antidiagonal, and we can express the $a$-variables on or below the first angle bisector in terms of those above. Thus, there remain $s(r-s)+{s(s-1)}/2$ free $a$-variables. Taken together with the $c$-variables, which are independent of the $a$-variables, we end up with an affine space of the desired dimension
\[\frac{s(s+1)}2+s(r-s)+\frac{s(s-1)}2=rs\;,\]
provided (\hyperlink{hyp:other:N3c}{N3.c}) is redundant.

We have to show that $\mat{b}^{\T}\;\Hh_r-\Hh_r\;\mat{b}=0$, or equivalently, that $\iota(\mat{b})=\mat{b}$. This holds true indeed, as follows from the description \eqref{eq:reduced:b} of $\mat{b}$ together with \eqref{eq:other:c} and \eqref{eq:other:d}. This proves the proposition in the case $n=2m+1$ odd.

\begin{rem}\label{rem:other:bestpoint}
The point given by the subspace $\F_1:=\spn_k\{e_1,\dots,e_s,\pi e_1,\dots,\pi e_r\}$ lies in the special fiber of the open subset constructed above. As in Rem.~\ref{rem:reduced:bestpoint}, it follows that this is one of the special points mentioned in sect.~\ref{ssec:reduced:bestpoint}.
\end{rem}

%...............................................................................

\subsection{Even Case}
Let $n=2m$ be even and $I=\{m\}$. Then the selfdual periodic lattice chain is given by
\[\ldots\rightarrow\Lambda_m\rightarrow\ldots\;,\]
with $\Lambda_m=\spn_{\OO_F}\{f_1,\dots,f_n\}$ denoting the standard lattice, where we have set $f_1:=\pi^{-1}e_1,\dots,f_m:=\pi^{-1}e_m,f_{m+1}:=e_{m+1},\dots,f_n:=e_n$. As usual, by adding the $\pi$-multiples of the respective basis vectors, we get a basis over $\OO_{F_0}$.

We proceed as in the odd case and construct an open subset of the wedge local model by considering $A$-valued points of $\Grass_{n,2n}^J$, with the complement of $J$ corresponding to the basis elements $f_1,\dots,f_s,\pi f_1,\dots,\pi f_r$. The points are represented as column spans $\F$ as in \eqref{eq:reduced:F}, and are subject to (\hyperlink{hyp:other:N3}{N3}), (\hyperlink{hyp:other:N4}{N4}), (\hyperlink{hyp:other:W}{W}), and (\hyperlink{hyp:other:Pi}{Pi}) from the previous subsection~\ref{ssec:other:odd}. Of course, this time the orthogonality condition has to be with respect to the natural perfect pairing $(\mbox{ , })\ot A:\ (\Lambda_m\ot A)\times(\Lambda_m\ot A)\to A$; the form is represented by the symmetric matrix
\[M'':=\twomat{}{-\J_{2m}}{\J_{2m}}{}\;.\]
The wedge condition is only posed if $r\neq s$.

As before, (\hyperlink{hyp:other:N4}{N4}), (\hyperlink{hyp:other:W}{W}), and (\hyperlink{hyp:other:Pi}{Pi}) yield the identities \eqref{eq:reduced:PiStable}--\eqref{eq:reduced:a_s}. Note that \eqref{eq:reduced:s_b_r-s} and \eqref{eq:reduced:r-s_b_r-s}, which were obtained using the wedge condition, trivially hold true if $r=s$.
Equation \eqref{eq:reduced:b} determines $b$ in terms of the other variables.

We evaluate the orthogonality condition (\hyperlink{hyp:other:N3}{N3}), or equivalently, the three subconditions (\hyperlink{hyp:other:N3a}{N3.a}), (\hyperlink{hyp:other:N3b}{N3.b}), and (\hyperlink{hyp:other:N3c}{N3.c}).

Recall that $s\le m$. The first condition gives
\beqn\label{eq:other:c2}\mat{c}=-\iota(\mat{c})\;.\eeqn
That is, $c$ has to be antisymmetric with respect to the antidiagonal. The solution space of this system of linear equations has dimension ${s(s-1)}/2$.

Condition (\hyperlink{hyp:other:N3b}{N3.b}) gives
\begin{align*}
0&={^{[s]}\!\F}^{\;\T}\;M''\;{_{[r]}\F}\\
&=\fourvex{\I_s}{\mat{a}^{\T}}{\mat{0}}{\mat{c}^{\T}}\;\twomat{}{-\J_{2m}}{\J_{2m}}{}\;\fourvec{\mat{0}}{\mat{b}}{\I_r}{\mat{d}}\\
&=\fourvex{\I_s}{\mat{a}^{\T}}{\mat{0}}{\mat{c}^{\T}}\;\fourvec{-\Hh_s\;\mat{d}}{-\J_{m,m-s}}{\J_{m-s,m}\;\mat{b}}{\mat{0}}\\
&=-\Hh_s\;\mat{d}-\mat{a}^{\T}\;\J_{m,m-s}\;,
\end{align*}
which is equivalent to 
\beqn\label{eq:other:d2}\mat{d}=-\Hh_s\;\mat{a}^{\T}\;\J_{m,m-s}\;.\eeqn
Consequently, all $d$-variables are determined by the $a$-variables. The first $s$ columns of the matrix equation \eqref{eq:other:d2} give $^{[s]}\mat{d}=\iota(\mat{a}_{[s]})$, and the last $r-s$ columns give $_{[r-s]}\mat{d}=-\Hh_s\;{\mat{a}^{[r-s]}}^{\T}\;\J_{2(m-s)}$;
these identities are used to substitute the corresponding terms in \eqref{eq:reduced:a_s}. We obtain
\beqn\label{eq:other:maineq2}(-\id+\iota)(\mat{a}_{[s]})=\Hh_s\;{\mat{a}^{[r-s]}}^{\T}\;\J_{2(m-s)}\;\mat{a}^{[r-s]}\;.\eeqn
Both sides of \eqref{eq:other:maineq2} are antisymmetric with respect to reflection at the antidiagonal: for the left hand side this is obvious; for the right hand side, which is temporarily denoted by $\mat{B}$, we calculate
$\iota(\mat{B})=\Hh_s\;{\mat{a}^{[r-s]}}^{\T}\;(-\J_{2(m-s)})\;\mat{a}^{[r-s]}\;\Hh_s\;\Hh_s=-\mat{B}$. In analogy to the previous cases, we get ${s(s-1)}/2$ equations, which express the $a$-variables below the first angle bisector in terms of those on or above. Therefore, the number of free $a$-variables is $(r-s)s+{s(s+1)}/2$. 

Since the $c$-variables are independent of the $a$-variables, we end up with an affine space of the asserted dimension
\[\frac{s(s-1)}2+(r-s)s+\frac{s(s+1)}2=rs\;,\]
provided we can show that the last $r$ columns of $\F$ are now automatically perpendicular to each other.

The remaining condition translates into $\mat{b}=-\J_{m,m-s}\;\mat{b}^{\T}\;\J_{m,m-s}$. Taking into account the block form of $\mat{b}$ described in \eqref{eq:reduced:b}, we split the last equation into four parts corresponding to the respective blocks of $\mat{b}$:
\begin{gather*}
\pi\;\mat{a}^{[r-s]}=\J_{2(m-s)}\;(-\pi\;_{[r-s]}\mat{d})^{\T}\;\Hh_s\;,\\
-\pi\;\I_{r-s}=-\J_{2(m-s)}\;(-\pi\;\I_{r-s})^{\T}\;\J_{2(m-s)}\;,\\
\pi_0\;\mat{c}+\pi\;_{[r-s]}\mat{d}\;\mat{a}^{[r-s]}=-\Hh_s\;(\pi_0\;\mat{c}+\pi\;_{[r-s]}\mat{d}\;\mat{a}^{[r-s]})^{\T}\;\Hh_s\;,\\
-\pi\;_{[r-s]}\mat{d}=\Hh_s\;(\pi\;{\mat{a}^{[r-s]}})^{\T}\;\J_{2(m-s)}\;.
\end{gather*}
All these equations hold true, as can be easily verified using \eqref{eq:other:c2} and \eqref{eq:other:d2}. This proves the proposition in the case $n=2m$ even.
\end{proof}

\begin{rem}\label{rem:other:bestpointeven}
The point given by the subspace $\F_1:=\spn_k\{f_1,\dots,f_s,\pi f_1,\dots,\pi f_r\}$ lies in 
the special fiber of
the open subset constructed above. As in Rem.~\ref{rem:reduced:bestpoint}, it follows that this is one of the special points mentioned in sect.~\ref{ssec:reduced:bestpoint}.
\end{rem}

%++++++++++++++++++++++++++++++++++++++++++++++++++++++++++++++++++++++++++++++

%\newpage
%\cleardoublepage
%\clearpage
\begin{bibdiv}
\begin{biblist}
\bibselect{myamsbib}
\end{biblist}
\end{bibdiv}

%++++++++++++++++++++++++++++++++++++++++++++++++++++++++++++++++++++++++++++++

%\newpage
%\Printindex

\end{document}